\documentclass{article}
\usepackage{amsmath}
\usepackage{fancyhdr}
\pagestyle{fancy}
\usepackage{amssymb}
\usepackage[margin=1.5in]{geometry}

\makeatletter

\providecommand{\tabularnewline}{\\}

\usepackage{amsmath}
\usepackage{amsthm}
\theoremstyle{plain}
\newtheorem{thm}{Theorem}[section]
\theoremstyle{definition}
\newtheorem{defn}[thm]{Definition}
\theoremstyle{plain}
\newtheorem{lem}[thm]{Lemma}
\theoremstyle{remark}

\usepackage{kbordermatrix}
\usepackage{listings}
\usepackage{amsmath}
\usepackage{dcolumn}
\usepackage{subfigure}

\DeclareMathOperator{\diag}{diag}

\numberwithin{equation}{section}
\newtheorem{specification}[thm]{Specification}
\theoremstyle{definition}
\newtheorem*{algorithm*}{Algorithm}

\newtheorem{alg}[thm]{Algorithm}
\newtheorem*{acknowledgment*}{Acknowledgment}

\makeatother

\begin{document}

\title{Computing the Complete CS Decomposition}

\author{Brian D. Sutton\thanks{Department of Mathematics, Randolph-Macon College, P.O.~Box 5005, Ashland, VA 23005 USA. email: bsutton@rmc.edu. Supplementary material is available at the author's home page.}}

\maketitle
\maketitle{}

\abstract{An algorithm is developed to compute the complete CS decomposition
(CSD) of a partitioned unitary matrix. Although the existence of the
CSD has been recognized since 1977, prior algorithms compute only
a reduced version (the \mbox{2-by-1} CSD) that is equivalent to two
simultaneous singular value decompositions. The algorithm presented
here computes the complete 2-by-2 CSD, which requires the simultaneous
diagonalization of all four blocks of a unitary matrix partitioned
into a 2-by-2 block structure. The algorithm appears to be the only
fully specified algorithm available. The computation occurs in two
phases. In the first phase, the unitary matrix is reduced to bidiagonal
block form, as described by Sutton and Edelman. In the second phase,
the blocks are simultaneously diagonalized using techniques from bidiagonal
SVD algorithms of Golub, Kahan, and Demmel. The algorithm has a number
of desirable numerical features.

\lhead{}\chead{Computing the Complete CS Decomposition, Brian D. Sutton}\rhead{\thepage}\cfoot{}\renewcommand{\headrulewidth}{0pt}

\section{Introduction}

The \emph{complete CS decomposition} (CSD) applies to any $m$-by-$m$
matrix $X$ from the unitary group $U(m)$, viewed as a 2-by-2 block
matrix,\[X=\kbordermatrix{&q&&m-q\\p&X_{11}&\vrule&X_{12}\\\cline{2-4}m-p&X_{21}&\vrule&X_{22}}.\]For
convenience, we assume $q\leq p$ and $p+q\leq m$. A complete CS
decomposition has the form\begin{gather}
X=\left[\begin{array}{cc}
U_{1}\\
 & U_{2}\end{array}\right]\left[\begin{array}{r|ccc}
C & S & 0 & 0\\
0 & 0 & I_{p-q} & 0\\
\hline -S & C & 0 & 0\\
0 & 0 & 0 & I_{m-p-q}\end{array}\right]\left[\begin{array}{cc}
V_{1}\\
 & V_{2}\end{array}\right]^{*},\label{eq:csd}\\
C=\diag(\cos(\theta_{1}),\dots,\cos(\theta_{q})),\quad S=\diag(\sin(\theta_{1}),\dots,\sin(\theta_{q})),\nonumber \end{gather}
in which $\theta_{1},\dots,\theta_{q}\in[0,\frac{\pi}{2}]$, $U_{1}\in U(p)$,
$U_{2}\in U(m-p)$, $V_{1}\in U(q)$, and $V_{2}\in U(m-q)$. The
letters \emph{CS} in the term \emph{CS decomposition} come from \emph{cosine-sine}.

The major contribution of this paper is an algorithm for computing
(\ref{eq:csd}). We believe this to be the only fully specified algorithm
available for computing the complete CS decomposition. Earlier algorithms
compute only a reduced form, the {}``2-by-1'' CSD, which is defined
in the next section. The algorithm developed in this article is based
on the SVD algorithm of Golub and Kahan and has a number of desirable
numerical properties.

The algorithm proceeds in two phases.

\begin{enumerate}
\item Phase I: Bidiagonalization. In the special case $p=q=\frac{m}{2}$,
the decomposition is \begin{equation}
X=\left[\begin{array}{cc}
P_{1}\\
 & P_{2}\end{array}\right]\left[\begin{array}{cc}
B_{11}^{(0)} & B_{12}^{(0)}\\
B_{21}^{(0)} & B_{22}^{(0)}\end{array}\right]\left[\begin{array}{cc}
Q_{1}\\
 & Q_{2}\end{array}\right]^{*},\label{eq:bdbdecomposition}\end{equation}
in which $B_{11}^{(0)}$ and $B_{21}^{(0)}$ are upper bidiagonal,
$B_{12}^{(0)}$ and $B_{22}^{(0)}$ are lower bidiagonal, and $P_{1}$,
$P_{2}$, $Q_{1}$, and $Q_{2}$ are $q$-by-$q$ unitary. We say
that the middle factor is a real orthogonal matrix in \emph{bidiagonal
block form}. (See Definition \ref{def:bdb}.)
\item Phase II: Diagonalization. The CSD of $\left[\begin{smallmatrix} B^{(0)}_{11} & B^{(0)}_{12} \\ B^{(0)}_{21} & B^{(0)}_{22} \end{smallmatrix}\right]$
is computed, \[
\left[\begin{array}{cc}
B_{11}^{(0)} & B_{12}^{(0)}\\
B_{21}^{(0)} & B_{22}^{(0)}\end{array}\right]=\left[\begin{array}{cc}
U_{1}\\
 & U_{2}\end{array}\right]\left[\begin{array}{rc}
C & S\\
-S & C\end{array}\right]\left[\begin{array}{cc}
V_{1}\\
 & V_{2}\end{array}\right]^{*}.\]

\end{enumerate}
Combining the factorizations gives the CSD of $X$,\begin{equation}
X=\left[\begin{array}{cc}
P_{1}U_{1}\\
 & P_{2}U_{2}\end{array}\right]\left[\begin{array}{rc}
C & S\\
-S & C\end{array}\right]\left[\begin{array}{cc}
Q_{1}V_{1}\\
 & Q_{2}V_{2}\end{array}\right]^{*}.\label{eq:combiningphases}\end{equation}
Phase I is a finite-time procedure first described in \cite{thesis},
and Phase II is an iterative procedure based on ideas from bidiagonal
SVD algorithms \cite{MR1057146,MR0183105}. 

Some of the earliest work related to the CSD was completed by Jordan,
Davis, and Kahan \cite{MR0246155,MR0264450,MR1503705}. The CSD as
we know it today and the term \emph{CS decomposition} first appeared
in a pair of articles by Stewart \cite{MR0461871,MR695598}. Computational
aspects of the 2-by-1 CSD are considered in \cite{bade93,MR857786,MR615522,MR695598,MR796639}
and later articles. A {}``sketch'' of an algorithm for the complete
CSD can be found in a paper by Hari \cite{MR2161439}, but few details
are provided. For general information and more references, see \cite{bai92,MR1417720,MR1287355}.

\subsection{Complete versus 2-by-1 CS decomposition}

Most commonly available CSD algorithms compute what we call the \emph{2-by-1
CS decomposition} of a matrix $\hat{X}$ with orthonormal columns
partitioned into a 2-by-1 block structure. In the special case $p=q=\frac{m}{2}$,
$\hat{X}$ has the form \[\hat{X}=\kbordermatrix{&q\\q&\hat{X}_{11}\\q&\hat{X}_{21}},\]
and the CSD is \[
\hat{X}=\left[\begin{array}{cc}
U_{1}\\
 & U_{2}\end{array}\right]\left[\begin{array}{r}
C\\
-S\end{array}\right]V_{1}^{*}.\]
A naive algorithm for computing the 2-by-1 CSD is to compute two SVD's,
\[
\left\{ \begin{aligned}\hat{X}_{11} & =U_{1}CV_{1}^{*}\\
\hat{X}_{21} & =(-U_{2})SV_{1}^{*},\end{aligned}
\right.\]
reordering rows and columns and adjusting signs as necessary to make
sure that the two occurrences of $V_{1}^{*}$ are identical and that
$C^{2}+S^{2}=I$. This works in theory if no two singular values of
$\hat{X}_{11}$ are repeated, but in practice it works poorly when
there are clustered singular values. Still, the basic idea can form
the backbone of an effective algorithm for the 2-by-1 CSD \cite{MR695598,MR796639}.

Unfortunately, many algorithms for the 2-by-1 CSD do not extend easily
to the complete 2-by-2 CSD. The problem is the more extensive sharing
of singular vectors evident below (still assuming $p=q=\frac{m}{2}$):\begin{equation}
\left\{ \begin{aligned}X_{11} & =U_{1}CV_{1}^{*} & X_{12} & =U_{1}SV_{2}^{*}\\
X_{21} & =(-U_{2})SV_{1}^{*} & X_{22} & =U_{2}CV_{2}^{*}.\end{aligned}
\right.\label{eq:csdsvd}\end{equation}
All four unitary matrices $U_{1}$, $U_{2}$, $V_{1}$, and $V_{2}$
play dual roles, providing singular vectors for two different blocks
of $X$. Enforcing these identities has proven difficult over the
years.

Our algorithm, unlike the naive algorithm, is designed to compute
the four SVD's in (\ref{eq:csdsvd}) \emph{simultaneously}, so that
no discrepancies ever arise.

\subsection{Applications}

Unlike existing 2-by-1 CSD algorithms, the algorithm developed here
fully solves Jordan's problem of angles between linear subspaces of
$\mathbb{R}^{n}$ \cite{MR1503705}. If the columns of matrices $X$
and $Y$ are orthonormal bases for two subspaces of $\mathbb{R}^{n}$,
then the \emph{principal angles} and \emph{principal vectors} between
the subspaces can be computed in terms of the SVD of $X^{T}Y$ \cite{MR1287355}.
The complete CSD, equivalent to four SVD's, simultaneously provides
principal vectors for these subspaces and their orthogonal complements.

In addition, our algorithm can be specialized to compute the 2-by-1
CSD and hence has application to the generalized singular value decomposition.

\subsection{Numerical properties}

The algorithm is designed for numerical stability. All four blocks
of the partitioned unitary matrix are treated simultaneously and with
equal regard, and no cleanup procedure is necessary at the end of
the algorithm. In addition, a new representation for orthogonal matrices
with a certain structure guarantees orthogonality, even on a floating-point
architecture \cite{thesis}.

\subsection{Efficiency}

As with the SVD algorithm of Golub and Kahan, Phase I (bidiagonalization)
is often more expensive than Phase II (diagonalization). For the special
case $p=q=\frac{m}{2}$, bidiagonalization requires about $2m^{3}$
flops---about $\frac{8}{3}q^{3}$ flops to bidiagonalize each block
of $\left[\begin{smallmatrix} X_{11} & X_{12} \\ X_{21} & X_{22} \end{smallmatrix}\right]$
and about $\frac{4}{3}q^{3}$ flops to accumulate each of $P_{1}$,
$P_{2}$, $Q_{1}$, and $Q_{2}$, for a total of about $4\left(\frac{8}{3}q^{3}\right)+4\left(\frac{4}{3}\right)q^{3}=2m^{3}$
flops.

\subsection{Overview of the algorithm}

\subsubsection{Bidiagonal block form}

During Phase I, the input unitary matrix is reduced to \emph{bidiagonal
block form}. A matrix in this form is real orthogonal and has a specific
sign pattern. Bidiagonal block form was independently formulated by
Sutton in 2005 \cite{thesis}. Some similar results appear in a 1993
paper by Watkins \cite{MR1234638}. The matrix structure and a related
decomposition have already been applied to a problem in random matrix
theory by Edelman and Sutton \cite{jacobipaper}.

\begin{defn}
\label{def:bdb}Given $\theta=(\theta_{1},\dots,\theta_{q})\in[0,\frac{\pi}{2}]^{q}$
and $\phi=(\phi_{1},\dots,\phi_{q-1})\in[0,\frac{\pi}{2}]^{q-1}$,
let $c_{i}=\cos\theta_{i}$, $s_{i}=\sin\theta_{i}$, $c'_{i}=\cos\phi_{i}$,
and $s'_{i}=\sin\phi_{i}$. Define $B_{ij}(\theta,\phi)$, $i,j=1,2$,
to be $q$-by-$q$ bidiagonal matrices, as follows. \begin{multline}
\left[\begin{array}{c|c}
B_{11}(\theta,\phi) & B_{12}(\theta,\phi)\\
\hline B_{21}(\theta,\phi) & B_{22}(\theta,\phi)\end{array}\right]\\
=\left[\begin{array}{cccc|cccc}
c_{1} & -s_{1}s'_{1} &  &  & s_{1}c'_{1}\\
 & c_{2}c'_{1} & \ddots &  & c_{2}s'_{1} & s_{2}c'_{2}\\
 &  & \ddots & -s_{q-1}s'_{q-1} &  & \ddots & \ddots\\
 &  &  & c_{q}c'_{q-1} &  &  & c_{q}s'_{q-1} & s_{q}\\
\hline -s_{1} & -c_{1}s'_{1} &  &  & c_{1}c'_{1}\\
 & -s_{2}c'_{1} & \ddots &  & -s_{2}s'_{1} & c_{2}c'_{2}\\
 &  & \ddots & -c_{q-1}s'_{q-1} &  & \ddots & \ddots\\
 &  &  & -s_{q}c'_{q-1} &  &  & -s_{q}s'_{q-1} & c_{q}\end{array}\right].\label{eq:jacobiform}\end{multline}
Any matrix of the form\[
\left[\begin{array}{c|c}
B_{11}(\theta,\phi) & B_{12}(\theta,\phi)\\
\hline B_{21}(\theta,\phi) & B_{22}(\theta,\phi)\end{array}\right]\]
is said to be in \emph{bidiagonal block form} and is necessarily real
orthogonal.
\end{defn}
To clarify (\ref{eq:jacobiform}), the $(q-1,q-1)$ entry of $B_{12}(\theta,\phi)$
is $s_{q-1}c'_{q-1}$, and the $(q-1,q-1)$ entry of $B_{22}(\theta,\phi)$
is $c_{q-1}c'_{q-1}$. Also, if $q=1$, then the matrices are defined
by \[
\left[\begin{array}{c|c}
B_{11}(\theta,\phi) & B_{12}(\theta,\phi)\\
\hline B_{21}(\theta,\phi) & B_{22}(\theta,\phi)\end{array}\right]=\left[\begin{array}{r|r}
c_{1} & s_{1}\\
\hline -s_{1} & c_{1}\end{array}\right].\]

As stated in the definition, any matrix whose entries satisfy the
relations of (\ref{eq:jacobiform}) is necessarily real orthogonal.
The reverse is true as well---any orthogonal matrix $X$ with the
bidiagonal structure and sign pattern of (\ref{eq:jacobiform}) is
expressible in terms of some $\theta$ and $\phi$. (This is implicit
in \cite{jacobipaper,thesis}.) Furthermore, every unitary matrix
is equivalent to a matrix in bidiagonal block form, as stated in the
next theorem.

\begin{thm}
\label{thm:bdbexists}Given any $m$-by-$m$ unitary matrix $X$ and
integers $p$, $q$ such that $0\leq q\leq p$ and $p+q\leq m$, there
exist matrices $P_{1}\in U(p)$, $P_{2}\in U(m-p)$, $Q_{1}\in U(q)$,
and $Q_{2}\in U(m-q)$ such that\[
X=\left[\begin{array}{cc}
P_{1}\\
 & P_{2}\end{array}\right]\left[\begin{array}{c|ccc}
B_{11}(\theta,\phi) & B_{12}(\theta,\phi) & 0 & 0\\
0 & 0 & I_{p-q} & 0\\
\hline B_{21}(\theta,\phi) & B_{22}(\theta,\phi) & 0 & 0\\
0 & 0 & 0 & I_{m-p-q}\end{array}\right]\left[\begin{array}{cc}
Q_{1}\\
 & Q_{2}\end{array}\right]^{*}\]
for some $\theta=(\theta_{1},\dots,\theta_{q})\in[0,\frac{\pi}{2}]^{q}$
and $\phi=(\phi_{1},\dots,\phi_{q-1})\in[0,\frac{\pi}{2}]^{q-1}$. 
\end{thm}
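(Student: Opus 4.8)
The plan is to reduce $X$ to bidiagonal block form by a sequence of one-sided Householder transformations applied alternately on the left and the right, exactly mirroring the Golub--Kahan bidiagonalization of a single matrix but carried out on all four blocks simultaneously. First I would handle the trivial padding: since $q \le p$ and $p+q \le m$, the target middle factor has the identity blocks $I_{p-q}$ and $I_{m-p-q}$ occupying the last $p-q$ rows of the top block-row and the last $m-p-q$ rows of the bottom block-row; I would argue that after an initial reduction one may assume $p = q$ and $m - p = q$ (i.e.\ $m = 2q$), so that the core problem is to bring the $2q$-by-$2q$ unitary matrix $\left[\begin{smallmatrix} X_{11} & X_{12} \\ X_{21} & X_{22}\end{smallmatrix}\right]$ (with $X_{ij}$ now $q$-by-$q$) to the form~(\ref{eq:jacobiform}). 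The reduction to this core case is a separate Householder sweep that zeroes out the relevant rows/columns against the $X_{21},X_{22}$ block-row using the orthonormality of columns; I would treat it briefly since it is a standard column-space argument.

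For the core case, the key is to proceed column by column, interleaving left multiplications by $\diag(P_1, P_2)$-type Householders with right multiplications by $\diag(Q_1,Q_2)$-type Householders, so that block-diagonal structure of the outer factors is preserved at every step. In step $k$: (i) use a Householder reflector acting on rows $k,\dots,q$ of the \emph{first} block-row (i.e.\ a reflector inside $U(q)$ applied as the top-left block of a block-diagonal left factor) to clear the subdiagonal of column $k$ of $B_{11}$, and simultaneously a reflector on rows $k,\dots,q$ of the second block-row to clear column $k$ of $B_{21}$ below its diagonal; (ii) use a right Householder acting on columns $k+1,\dots,q$ of the \emph{second} block-column to clear the superdiagonal-plus entries of row $k$ of $B_{12}$ (and the corresponding one in $B_{22}$), and another right reflector on columns $k+1,\dots,q$ of the first block-column to clear row $k$ of $B_{11}$ past its superdiagonal. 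One must check that each reflector, when applied, does not destroy zeros created earlier — this is the usual "staircase" bookkeeping of Golub--Kahan, and it goes through here because the left reflectors touch only rows $\ge k$ and the right reflectors only columns $\ge k$ (resp.\ $\ge k+1$). After the sweep, $B_{11}$ and $B_{21}$ are upper bidiagonal and $B_{12},B_{22}$ are lower bidiagonal, and the outer factors are block-diagonal unitaries.

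At that point the reduced matrix is a real orthogonal matrix (one may take the Householders real, or absorb phases into the outer factors) with precisely the bidiagonal sparsity pattern and sign pattern of~(\ref{eq:jacobiform}); by the remark preceding the theorem — "any orthogonal matrix $X$ with the bidiagonal structure and sign pattern of~(\ref{eq:jacobiform}) is expressible in terms of some $\theta$ and $\phi$" — it must equal $\left[\begin{smallmatrix} B_{11}(\theta,\phi) & B_{12}(\theta,\phi) \\ B_{21}(\theta,\phi) & B_{22}(\theta,\phi)\end{smallmatrix}\right]$ for suitable $\theta \in [0,\frac\pi2]^q$, $\phi \in [0,\frac\pi2]^{q-1}$, and adjusting signs of the Householder vectors (equivalently, post-composing the outer factors with sign diagonals) lets us force the entries into the stated nonnegative ranges. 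Combining this with the padding step of the first paragraph yields the claimed factorization of the original $X$.

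The main obstacle I anticipate is \emph{not} the existence of the individual reflectors but verifying that the sign pattern of~(\ref{eq:jacobiform}) — the specific pattern of minus signs tying $B_{11}$ to $B_{21}$ and $B_{12}$ to $B_{22}$ — can always be achieved, i.e.\ that the freedom in choosing Householder signs and the block-diagonal phase adjustments is exactly enough to realize it. Concretely, one has $2q$ left-sign choices, $2(q-1)$ right-sign choices (the right factor on the leading column is essentially fixed), and one must land on a single prescribed pattern; making this count work, and confirming it is consistent (no over-determination) given the orthogonality constraints among the four blocks, is the delicate part, and it is presumably where the appeal to the characterization implicit in \cite{jacobipaper,thesis} does the real work. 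The interlacing/staircase argument and the reduction to $m=2q$ are, by contrast, routine adaptations of classical bidiagonalization.
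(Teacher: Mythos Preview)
Your outline follows the same Householder bidiagonalization route as the paper (Algorithm~\ref{alg:bidiagonalize}), but you have the emphasis inverted: the sign pattern you flag as ``the delicate part'' is in fact trivial (see Theorem~\ref{thm:signsarenothingtofear}---once the bidiagonal sparsity is obtained, diagonal sign matrices on the four outer factors fix all signs), while the step you treat as routine is where the real content lies.

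The gap is in your steps (i)--(ii). You write that a right reflector in the second block-column clears row $k$ of $B_{12}$ ``(and the corresponding one in $B_{22}$)'', and similarly for the first block-column. But row $k$ of $B_{12}$ and row $k$ of $B_{22}$ live in \emph{different} rows of the full matrix; a single reflector in $Q_2$ can annihilate both only if the two row-vectors happen to be colinear. Likewise, the left reflector in $P_1$ at step $k>1$ must simultaneously clear column $k$ of $B_{11}$ \emph{and} column $k-1$ of $B_{12}$ below row $k$, which again requires a colinearity that does not hold for an arbitrary matrix. These colinearities are exactly what the unitarity of the running matrix $Y$ provides---e.g., after the first left reflectors, $Y(1,2{:}q)$ and $Y(p{+}1,2{:}q)$ are parallel because $Y(:,1)$ is orthogonal to $Y(:,2),\dots,Y(:,q)$---and this is the crux of the argument in \cite{jacobipaper,thesis} and in the illustration following Algorithm~\ref{alg:bidiagonalize}. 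Without invoking it, your parenthetical ``(and the corresponding one in $B_{22}$)'' is an assertion, not a proof, and the ``usual staircase bookkeeping of Golub--Kahan'' does not supply it: standard Golub--Kahan only asks each reflector to create zeros in one vector, not two at once.

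In short: keep your plan, drop the worry about signs, and instead prove at each step the colinearity relations (forced by orthogonality of the intermediate $Y$) that allow a single block-diagonal Householder to clear the required entries in two blocks simultaneously. That is the substance of the theorem.
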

A proof of the theorem has already been published in \cite{jacobipaper,thesis},
along with an algorithm for computing the decomposition. The algorithm
applies pairs of Householder reflectors to the left and right of $X$,
causing the structure to evolve as in Fig.~\ref{fig:bidiagonalizepattern}.
This serves as Phase I of the CSD algorithm.

\begin{figure}
\footnotesize
\hspace*{-1.05in}\begin{minipage}{7.5in}
\begin{align*}
 & \underset{\text{unitary}}{\left[\begin{array}{ccc|ccc}
\times & \times & \times & \times & \times & \times\\
\times & \times & \times & \times & \times & \times\\
\times & \times & \times & \times & \times & \times\\
\hline \times & \times & \times & \times & \times & \times\\
\times & \times & \times & \times & \times & \times\\
\times & \times & \times & \times & \times & \times\end{array}\right]}\rightarrow\left[\begin{array}{ccc|ccc}
+ & \times & \times & \times & \times & \times\\
 & \times & \times & \times & \times & \times\\
 & \times & \times & \times & \times & \times\\
\hline - & \times & \times & \times & \times & \times\\
 & \times & \times & \times & \times & \times\\
 & \times & \times & \times & \times & \times\end{array}\right]\rightarrow\left[\begin{array}{ccc|ccc}
+ & - &  & +\\
 & \times & \times & \times & \times & \times\\
 & \times & \times & \times & \times & \times\\
\hline - & - &  & +\\
 & \times & \times & \times & \times & \times\\
 & \times & \times & \times & \times & \times\end{array}\right]\rightarrow\left[\begin{array}{ccc|ccc}
+ & - &  & +\\
 & + & \times & + & \times & \times\\
 &  & \times &  & \times & \times\\
\hline - & - &  & +\\
 & - & \times & - & \times & \times\\
 &  & \times &  & \times & \times\end{array}\right]\\
\rightarrow & \left[\begin{array}{ccc|ccc}
+ & - &  & +\\
 & + & - & + & +\\
 &  & \times &  & \times & \times\\
\hline - & - &  & +\\
 & - & - & - & +\\
 &  & \times &  & \times & \times\end{array}\right]\rightarrow\left[\begin{array}{ccc|ccc}
+ & - &  & +\\
 & + & - & + & +\\
 &  & + &  & + & \times\\
\hline - & - &  & +\\
 & - & - & - & +\\
 &  & - &  & - & \times\end{array}\right]\underset{\text{real orthogonal}}{\rightarrow\left[\begin{array}{ccc|ccc}
+ & - &  & +\\
 & + & - & + & +\\
 &  & + &  & + & +\\
\hline - & - &  & +\\
 & - & - & - & +\\
 &  & - &  & - & +\end{array}\right]}.\end{align*}
\end{minipage}

\caption{\label{fig:bidiagonalizepattern}Reduction to bidiagonal block form}

\end{figure}

\subsubsection{\label{sub:implicitqr}Simultaneous SVD steps}

Phase II of the algorithm simultaneously applies the bidiagonal SVD
algorithm of Golub and Kahan \cite{MR1057146,MR0183105,MR1417720}
to each of the four blocks of a matrix in bidiagonal block form.

The bidiagonal SVD algorithm is an iterative scheme. Given an initial
bidiagonal matrix $B^{(0)}$, the algorithm produces a sequence $B^{(0)}\rightarrow B^{(1)}\rightarrow B^{(2)}\rightarrow\cdots\rightarrow\Sigma$
converging to a diagonal matrix of singular values. Implicitly, the
step from $B^{(n)}$ to $B^{(n+1)}$ involves a QR factorization of
$(B^{(n)})^{T}B^{(n)}-\sigma^{2}I$, for some appropriately chosen
$\sigma\geq0$, but in practice the matrix $(B^{(n)})^{T}B^{(n)}$
is never explicitly formed. Instead, the transformation from $B^{(n)}$
to $B^{(n+1)}$ is accomplished through a sequence of Givens rotations.
The first Givens rotation introduces a {}``bulge,''$ $ and the
subsequent rotations {}``chase the bulge'' away.

Our algorithm applies this idea simultaneously to all four blocks
to execute a \emph{CSD step}. First, two bulges are introduced by
a Givens rotation (Fig.~\ref{fig:implicitqrpatterna}), and then
the bulges are chased away, also by Givens rotations (Fig.~\ref{fig:implicitqrpatternb}).
The end result is a new matrix in bidiagonal block form whose blocks
tend to be closer to diagonal than the original blocks.

\begin{figure}
\footnotesize
\subfigure[\label{fig:implicitqrpatterna}Bulges are introduced.]{%
\begin{minipage}[t][1\totalheight]{1\columnwidth}%
\[
\left[\begin{array}{ccc|ccc}
+ & - &  & +\\
 & + & - & + & +\\
 &  & + &  & + & +\\
\hline - & - &  & +\\
 & - & - & - & +\\
 &  & - &  & - & +\end{array}\right]\rightarrow\left[\begin{array}{ccc|ccc}
\times & \times &  & \times\\
\bigstar & \times & \times & \times & \times\\
 &  & \times &  & \times & \times\\
\hline \times & \times &  & \times\\
\bigstar & \times & \times & \times & \times\\
 &  & \times &  & \times & \times\end{array}\right]\]
\end{minipage}}

\subfigure[\label{fig:implicitqrpatternb}Bulges are chased.]{%
\begin{minipage}[t][1\totalheight]{1\columnwidth}%
\begin{align*}
 & \left[\begin{array}{ccc|ccc}
\times & \times &  & \times\\
\bigstar & \times & \times & \times & \times\\
 &  & \times &  & \times & \times\\
\hline \times & \times &  & \times\\
\bigstar & \times & \times & \times & \times\\
 &  & \times &  & \times & \times\end{array}\right]\rightarrow\left[\begin{array}{ccc|ccc}
\times & \times & \bigstar & \times & \bigstar\\
 & \times & \times & \times & \times\\
 &  & \times &  & \times & \times\\
\hline \times & \times & \bigstar & \times & \bigstar\\
 & \times & \times & \times & \times\\
 &  & \times &  & \times & \times\end{array}\right]\rightarrow\left[\begin{array}{ccc|ccc}
\times & \times &  & \times\\
 & \times & \times & \times & \times\\
 & \bigstar & \times & \bigstar & \times & \times\\
\hline \times & \times &  & \times\\
 & \times & \times & \times & \times\\
 & \bigstar & \times & \bigstar & \times & \times\end{array}\right]\\
\rightarrow & \left[\begin{array}{ccc|ccc}
\times & \times &  & \times\\
 & \times & \times & \times & \times & \bigstar\\
 &  & \times &  & \times & \times\\
\hline \times & \times &  & \times\\
 & \times & \times & \times & \times & \bigstar\\
 &  & \times &  & \times & \times\end{array}\right]\rightarrow\left[\begin{array}{ccc|ccc}
+ & - &  & +\\
 & + & - & + & +\\
 &  & + &  & + & +\\
\hline - & - &  & +\\
 & - & - & - & +\\
 &  & - &  & - & +\end{array}\right].\end{align*}
\end{minipage}}

\caption{\label{fig:implicitqrpattern}CSD step}

\end{figure}

\subsubsection{The driver routine}

The algorithm as a whole proceeds roughly as follows.

\begin{itemize}
\item Execute Algorithm $\mathbf{bidiagonalize}$ to transform $X$ to bidiagonal
block form. (See Fig.~\ref{fig:bidiagonalizepattern}).
\item Until convergence,

\begin{itemize}
\item Execute Algorithm $\mathbf{csd\_step}$ to apply four simultaneous
SVD steps. (See Fig.~\ref{fig:implicitqrpattern}.)
\end{itemize}
\end{itemize}
The algorithm as a whole is represented by Fig.~\ref{fig:outline}.

\begin{figure}
\small
\subfigure[Reduction to bidiagonal block form]{%
\begin{minipage}[t][1\totalheight]{1\columnwidth}%
\[
X=\left[\begin{array}{cc}
X_{11} & X_{12}\\
X_{21} & X_{22}\end{array}\right]\rightarrow\left[\begin{array}{cc}
B_{11}^{(0)} & B_{12}^{(0)}\\
B_{21}^{(0)} & B_{22}^{(0)}\end{array}\right]\]
\end{minipage}}

\subfigure[\label{fig:outlineb}Iteration of the CSD step]{%
\begin{minipage}[t][1\totalheight]{1\columnwidth}%
\[
\left[\begin{array}{cc}
B_{11}^{(0)} & B_{12}^{(0)}\\
B_{21}^{(0)} & B_{22}^{(0)}\end{array}\right]\rightarrow\left[\begin{array}{cc}
B_{11}^{(1)} & B_{12}^{(1)}\\
B_{21}^{(1)} & B_{22}^{(1)}\end{array}\right]\rightarrow\left[\begin{array}{cc}
B_{11}^{(2)} & B_{12}^{(2)}\\
B_{21}^{(2)} & B_{22}^{(2)}\end{array}\right]\rightarrow\cdots\rightarrow\left[\begin{array}{cc}
B_{11}^{(N)} & B_{12}^{(N)}\\
B_{21}^{(N)} & B_{22}^{(N)}\end{array}\right]=\left[\begin{array}{cc}
C & S\\
-S & C\end{array}\right]\]
\end{minipage}}

\caption{\label{fig:outline}Computing the complete CSD}

\end{figure}

Matrices in bidiagonal block form may be represented implicitly in
terms of $\theta$ and $\phi$ from Definition \ref{def:bdb}. In
fact, the overall algorithm implicitly represents the sequence of
Fig.~\ref{fig:outlineb} as\[
(\theta^{(0)},\phi^{(0)})\rightarrow(\theta^{(1)},\phi^{(1)})\rightarrow(\theta^{(2)},\phi^{(2)})\rightarrow\cdots\rightarrow(\theta^{(N)},\phi^{(N)}).\]
The implicitly represented matrices are exactly orthogonal, even in
floating-point. The process stops when $\phi^{(N)}$ is sufficiently
close to $(0,\dots,0)$; then the blocks of (\ref{eq:jacobiform})
are diagonal up to machine precision.

\subsection{Overview of the article}

The remainder of the article is organized as follows.

\begin{center}
\begin{tabular}{rl}
\hline 
Section & Title\tabularnewline
\hline
2 & Phase I: Algorithm $\mathbf{bidiagonalize}$\tabularnewline
3 & Reviewing and extending the SVD step\tabularnewline
4 & Phase II: Algorithm $\mathbf{csd\_step}$\tabularnewline
5 & Algorithm $\mathbf{csd}$\tabularnewline
6 & On numerical stability\tabularnewline
\hline
\end{tabular}
\par\end{center}

\noindent{}The final section contains results of numerical tests
on a BLAS/LAPACK-based implementation, which is available from the
author's web site.

\section{\label{sec:bidiagonalize}Phase I: Algorithm $\mathbf{bidiagonalize}$}

Phase I of the CSD algorithm is to transform the partitioned unitary
matrix $X$ to bidiagonal block form.

\begin{specification}\label{spec:bidiagonalize}Given an $m$-by-$m$
unitary matrix $X$ and integers $p$, $q$ with $0\leq q\leq p$
and $p+q\leq m$, $\mathbf{bidiagonalize}(X,p,q)$ should compute
$\theta^{(0)}=(\theta_{1}^{(0)},\dots,\theta_{q}^{(0)})\in[0,\frac{\pi}{2}]^{q}$,
$\phi^{(0)}=(\phi_{1}^{(0)},\dots,\phi_{q-1}^{(0)})\in[0,\frac{\pi}{2}]^{q-1}$,
$P_{1}\in U(p)$, $P_{2}\in U(m-p)$, $Q_{1}\in U(q)$, and $Q_{2}\in U(m-q)$
such that\begin{equation}
X=\left[\begin{array}{cc}
P_{1}\\
 & P_{2}\end{array}\right]\left[\begin{array}{c|ccc}
B_{11}^{(0)} & B_{12}^{(0)} & 0 & 0\\
0 & 0 & I_{p-q} & 0\\
\hline B_{21}^{(0)} & B_{22}^{(0)} & 0 & 0\\
0 & 0 & 0 & I_{m-p-q}\end{array}\right]\left[\begin{array}{cc}
Q_{1}\\
 & Q_{2}\end{array}\right]^{*},\label{eq:bidiagonalizeresult}\end{equation}
in which $B_{ij}^{(0)}=B_{ij}(\theta^{(0)},\phi^{(0)})$, $i,j=1,2$,
are bidiagonal matrices defined in (\ref{eq:jacobiform}).\end{specification}

The algorithm has already appeared in \cite{jacobipaper,thesis}.
It is reproduced here. Matlab-style indexing is used---$A(i,j)$ refers
to the $i,j$ entry of $A$; $A(i:k,j:l)$ refers to the submatrix
of $A$ in rows $i,\dots,k$ and columns $j,\dots,l$; $A(i:k,:)$
refers to the submatrix of $A$ in rows $i,\dots,k$; and so on. Also,
$\mathbf{house}(x)$ constructs a Householder reflector $F=\omega(I-\beta vv^{*})$
for which the first entry of $Fx$ is real and nonnegative and the
remaining entries are zero. (This is an abuse of common terminology---$F$
is not Hermitian if $\omega$ is not real.) If given the empty vector
$()$, $\mathbf{house}$ returns an identity matrix. Finally, $c_{i}$,
$s_{i}$, $c'_{i}$, and $s'_{i}$ are shorthand for $\cos\theta_{i}^{(0)}$,
$\sin\theta_{i}^{(0)}$, $\cos\phi_{i}^{(0)}$, and $\sin\phi_{i}^{(0)}$,
respectively.

\begin{alg}[$\mathbf{bidiagonalize}$]\label{alg:bidiagonalize}
\ 
\lstset{columns=flexible,escapechar=\%,mathescape,numberblanklines=false,numbers=left,numberstyle=\tiny}
\begin{lstlisting}
$Y := X$;

for $i := 1,\dots,q$

    $u_1 := \twocases{Y(1:p,1)}{i=1}{c'_{i-1}Y(i:p,i)+s'_{i-1}Y(i:p,q-1+i)}{i>1;}$
    $u_2 := \twocases{-Y(p+1:m,1)}{i=1}{-c'_{i-1}Y(p+i:m,i)-s'_{i-1}Y(p+i:m,q-1+i)}{i>1;}$
    $\theta^{(0)}_i := \mathbf{atan2}(\|u_2\|,\|u_1\|)$;
    $P_1^{(i)} := \left[\begin{smallmatrix} I_{i-1} & \\ & \mathbf{house}(u_1)^*\end{smallmatrix}\right];\;\;P_2^{(i)} := \left[\begin{smallmatrix} I_{i-1} & \\ & \mathbf{house}(u_2)^*\end{smallmatrix}\right]$;
    $Y := \left[\begin{smallmatrix}P_1^{(i)} & \\ & P_2^{(i)}\end{smallmatrix}\right]^*Y$;

    $v_1 := \twocases{-s_{i}Y(i,i+1:q)-c_{i}Y(p+i,i+1:q)}{i<q}{()}{i=q;}$
    $v_2 := s_{i}Y(i,q+i:m)+c_{i}Y(p+i,q+i:m)$;
    if $i<q$, then
        $\phi^{(0)}_i := \mathbf{atan2}(\|v_1\|,\|v_2\|)$;
        $Q_1^{(i)} := \left[\begin{smallmatrix} I_{i} & \\ & \mathbf{house}(v_1^*)^*\end{smallmatrix}\right]$;
    else
        $Q_1^{(q)} := I_q$;
    end if
    $Q_2^{(i)} := \left[\begin{smallmatrix}I_{i-1} & \\ & \mathbf{house}(v_2^*)^*\end{smallmatrix}\right]$;
    $Y := Y\left[\begin{smallmatrix}Q_1^{(i)}&\\&Q_2^{(i)}\end{smallmatrix}\right]$;

end for

$P_1:=P_1^{(1)}\cdots P_1^{(q)}$; $P_2:=P_2^{(1)}\cdots P_2^{(q)}$; $Q_1:=Q_1^{(1)}\cdots Q_1^{(q-1)}$; $Q_2:=Q_2^{(1)}\cdots Q_2^{(q)}$;

%Using an LQ factorization, diagonalize the submatrix of $Y$ lying in rows $q+1,\dots,p$ and $p+q+1,\dots,m$ and columns $2q+1,\dots,m$ and update $Q_2$;%
\end{lstlisting}
\end{alg}

\begin{thm}
Algorithm $\mathbf{bidiagonalize}$ satisfies Specification \ref{spec:bidiagonalize}.
\end{thm}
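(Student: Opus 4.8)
The plan is an induction on the loop counter $i$ that tracks the working matrix $Y$, with the conclusion read off at the end. First the routine parts: each of $P_1^{(i)},P_2^{(i)},Q_1^{(i)},Q_2^{(i)}$ is a Householder reflector bordered by an identity block and hence unitary, so the accumulated products $P_1,P_2,Q_1,Q_2$ are unitary of sizes $p,m{-}p,q,m{-}q$, and the calls to $\mathbf{atan2}$ take two nonnegative arguments, so $\theta^{(0)}\in[0,\tfrac\pi2]^q$ and $\phi^{(0)}\in[0,\tfrac\pi2]^{q-1}$. Telescoping the updates $Y\mathrel{:=}\left[\begin{smallmatrix}P_1^{(i)}&\\&P_2^{(i)}\end{smallmatrix}\right]^{*}Y$ and $Y\mathrel{:=}Y\left[\begin{smallmatrix}Q_1^{(i)}&\\&Q_2^{(i)}\end{smallmatrix}\right]$ over $i=1,\dots,q$ shows that the final $Y$ equals $\left[\begin{smallmatrix}P_1&\\&P_2\end{smallmatrix}\right]^{*}X\left[\begin{smallmatrix}Q_1&\\&Q_2\end{smallmatrix}\right]$; since $X$ is unitary, $Y$ stays unitary after every step, and it remains to show that this final $Y$ is the middle factor of (\ref{eq:bidiagonalizeresult}).

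The heart of the argument is a loop invariant: after pass $i$, the leading $i$ rows of each block-row of $Y$, the leading $i$ columns of the first block-column, and the leading $i{-}1$ columns of the second block-column already carry exactly the entries and zero pattern prescribed by (\ref{eq:jacobiform}) (together with one more entry of column $q{+}i$), these finished positions are untouched by all later reflectors, and what remains is unitary on the complementary coordinates. The inductive step turns on a parallelism observation. For $i>1$ the vector $u_1$ is by construction the fixed combination $c'_{i-1}(\text{col }i)+s'_{i-1}(\text{col }q{+}i{-}1)$ of two partial columns of $Y$, and orthonormality of $Y$ together with the part already reduced forces these two partial columns to be \emph{parallel}, with proportionality constant $s'_{i-1}/c'_{i-1}$; this is extracted from the orthogonality of row $i{-}1$ — whose only entries in the still-active columns lie in columns $i$ and $q{+}i{-}1$ — to every active row. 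Hence $u_1=\tfrac1{c'_{i-1}}\bigl(\text{col }i|_{\text{rows }i:p}\bigr)$ and $u_2=-\tfrac1{c'_{i-1}}\bigl(\text{col }i|_{\text{rows }p+i:m}\bigr)$, so applying $\mathbf{house}(u_1)$ and $\mathbf{house}(u_2)$ annihilates \emph{both} partial columns below their diagonal positions and installs exactly $c_ic'_{i-1}$ and $-s_ic'_{i-1}$ in column $i$ and $c_is'_{i-1}$ and $-s_is'_{i-1}$ in column $q{+}i{-}1$, as in (\ref{eq:jacobiform}); the same orthonormality gives $\|u_1\|^{2}+\|u_2\|^{2}=1$, so $c_i=\|u_1\|$, $s_i=\|u_2\|$, which is exactly how the algorithm defines $\theta_i^{(0)}$. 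The right-hand reflectors are handled the same way: once column $i$ has been created, orthogonality to it forces rows $i$ and $p{+}i$ to be parallel on the active columns, so $\mathbf{house}(v_1^{*})$ and $\mathbf{house}(v_2^{*})$ install the superdiagonal entries of $B_{11},B_{21}$ and the diagonal entries of $B_{12},B_{22}$, with $\|v_1\|^{2}+\|v_2\|^{2}=1$ matching the definition of $\phi_i^{(0)}$. (Pass $i=1$ is this computation with $u_1$ taken to be the first column directly; pass $i=q$ is it with $v_1=()$ and $Q_1^{(q)}\mathrel{:=}I_q$, the $B_{11}$ column block simply ending.) This advances the invariant.

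When the loop terminates, the first $2q$ columns of $Y$ are precisely the columns of $\left[\begin{smallmatrix}B_{11}(\theta,\phi)&B_{12}(\theta,\phi)\\0&0\\B_{21}(\theta,\phi)&B_{22}(\theta,\phi)\\0&0\end{smallmatrix}\right]$. Because $\left[\begin{smallmatrix}B_{11}&B_{12}\\B_{21}&B_{22}\end{smallmatrix}\right]$ is orthogonal (Definition \ref{def:bdb}), these $2q$ columns span the coordinate subspace indexed by the rows of the $B$-blocks, so orthonormality of $Y$ confines the remaining $m{-}2q$ columns to the complementary rows; carrying the trailing part along in the invariant then identifies it with $\diag(I_{p-q},I_{m-p-q})$, so the final $Y$ is the middle factor of (\ref{eq:bidiagonalizeresult}), as required.

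The step I expect to be the main obstacle is the loop invariant itself: pinning down exactly which entries of $Y$ are finished after pass $i$ and checking that the subsequent reflectors act as the identity on them, proving the parallelism claims and the norm identities $\|u_1\|^{2}+\|u_2\|^{2}=1=\|v_1\|^{2}+\|v_2\|^{2}$, and — the genuinely delicate point — tracking the trailing columns closely enough that the blocks $I_{p-q}$ and $I_{m-p-q}$ really emerge (this also relies on the precise sign-normalized form of $\mathbf{house}$). One must additionally handle the degenerate cases, where $s_i=0$ or $c'_i=0$ makes the proportionality constants degenerate and $\mathbf{house}$ may be applied to a zero vector; these are routine once the generic argument is in hand.
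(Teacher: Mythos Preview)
Your proposal is correct and follows essentially the same approach as the paper: the paper defers the formal proof to \cite{jacobipaper,thesis} but the illustration it gives immediately afterward (for $m=6$, $p=q=3$) is exactly your loop-invariant argument, hinging on the same colinearity observation deduced from orthonormality of $Y$. Your identification of the delicate points---tracking the trailing $I_{p-q}$, $I_{m-p-q}$ blocks and the degenerate cases where a Householder vector vanishes---is apt, and these are precisely the details that the cited references would fill in.
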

The proof is available in \cite{jacobipaper,thesis}.

We illustrate the algorithm by concentrating on the case $m=6$, $p=3$,
$q=3$.

In the beginning, the matrix entries can have any signs,

{\footnotesize \[
Y:=X=\left[\begin{array}{ccc|ccc}
\times & \times & \times & \times & \times & \times\\
\times & \times & \times & \times & \times & \times\\
\times & \times & \times & \times & \times & \times\\
\hline \times & \times & \times & \times & \times & \times\\
\times & \times & \times & \times & \times & \times\\
\times & \times & \times & \times & \times & \times\end{array}\right].\]
}{\footnotesize \par}

To introduce zeros into column 1, two Householder reflectors, based
on $u_{1}=Y(1\!:\!3,1)$ and $u_{2}=-Y(4\!:\!6,1)$, respectively,
are applied.

{\footnotesize \[
Y:=\left[\begin{array}{c|c}
\mathbf{house}(u_{1})\\
\hline  & \mathbf{house}(u_{2})\end{array}\right]Y=\left[\begin{array}{ccc|ccc}
+ & \times & \times & \times & \times & \times\\
0 & \times & \times & \times & \times & \times\\
0 & \times & \times & \times & \times & \times\\
\hline - & \times & \times & \times & \times & \times\\
0 & \times & \times & \times & \times & \times\\
0 & \times & \times & \times & \times & \times\end{array}\right].\]
}In fact, because $Y$ is unitary, $\|u_{1}\|^{2}+\|u_{2}\|^{2}=1$,
and $\theta_{1}^{(0)}$ is defined so that

{\footnotesize \[
Y=\left[\begin{array}{ccc|ccc}
c_{1} & \times & \times & \times & \times & \times\\
0 & \times & \times & \times & \times & \times\\
0 & \times & \times & \times & \times & \times\\
\hline -s_{1} & \times & \times & \times & \times & \times\\
0 & \times & \times & \times & \times & \times\\
0 & \times & \times & \times & \times & \times\end{array}\right].\]
}{\footnotesize \par}

Next, the algorithm focuses on rows 1 and 4. Now something nonobvious
happens. $Y(1,2\!:\!3)$ and $Y(4,2\!:\!3)$ are colinear (because
$Y(:,1)$ is orthogonal to $Y(:,2)$ and $Y(:,3)$), as are $Y(1,4\!:\!6)$
and $Y(4,4\!:\!6)$. (By \emph{colinear}, we mean that the absolute
value of the inner product equals the product of norms.) The row vectors
$v_{1}$ and $v_{2}$ are defined so that $v_{1}$ is colinear with
$Y(1,2\!:\!3)$ and $Y(4,2\!:\!3)$ and $v_{2}$ is colinear with
$Y(1,4\!:\!6)$ and $Y(4,4\!:\!6)$. Computing $\phi_{1}^{(0)}$ and
multiplying by Householder reflectors gives

{\footnotesize \[
Y:=Y\left[\begin{array}{cc|c}
1 &  & {}\\
 & \mathbf{house}(v_{1}^{*}) & {}\\
\hline  &  & \mathbf{house}(v_{2}^{*})\end{array}\right]=\left[\begin{array}{ccc|ccc}
c_{1} & -s_{1}s'_{1} & 0 & s_{1}c'_{1} & 0 & 0\\
0 & \times & \times & \times & \times & \times\\
0 & \times & \times & \times & \times & \times\\
\hline -s_{1} & -c_{1}s'_{1} & 0 & c_{1}c'_{1} & 0 & 0\\
0 & \times & \times & \times & \times & \times\\
0 & \times & \times & \times & \times & \times\end{array}\right].\]
}{\footnotesize \par}

The algorithm proceeds in a similar fashion. Now, $Y(2\!:\!3,2)$
and $Y(2\!:\!3,4)$ are colinear, as are $Y(5\!:\!6,2)$ and $Y(5\!:\!6,4)$.
By computing $\theta_{2}^{(0)}$ and applying Householder reflectors,
we obtain

{\footnotesize \[
Y:=\left[\begin{array}{ccc|ccc}
c_{1} & -s_{1}s'_{1} & 0 & s_{1}c'_{1} & 0 & 0\\
0 & c_{2}c'_{1} & \times & c_{2}s'_{1} & \times & \times\\
0 & 0 & \times & 0 & \times & \times\\
\hline -s_{1} & -c_{1}s'_{1} & 0 & c_{1}c'_{1} & 0 & 0\\
0 & -s_{2}c'_{1} & \times & -s_{2}s'_{1} & \times & \times\\
0 & 0 & \times & 0 & \times & \times\end{array}\right],\]
}then another pair of Householder reflectors gives

{\footnotesize \[
Y:=\left[\begin{array}{ccc|ccc}
c_{1} & -s_{1}s'_{1} & 0 & s_{1}c'_{1} & 0 & 0\\
0 & c_{2}c'_{1} & -s_{2}s'_{2} & c_{2}s'_{1} & s_{2}c'_{2} & 0\\
0 & 0 & \times & 0 & \times & \times\\
\hline -s_{1} & -c_{1}s'_{1} & 0 & c_{1}c'_{1} & 0 & 0\\
0 & -s_{2}c'_{1} & -c_{2}s'_{2} & -s_{2}s'_{1} & c_{2}c'_{2} & 0\\
0 & 0 & \times & 0 & \times & \times\end{array}\right],\]
}and so on. Note that the final matrix is represented implicitly by
$\theta_{1}^{(0)},\dots,\theta_{q}^{(0)}$ and $\phi_{1}^{(0)},\dots,\phi_{q-1}^{(0)}$,
so that it is exactly orthogonal, even on a floating-point architecture.

The following theorem will be useful later. A \emph{diagonal signature
matrix} is a diagonal matrix whose diagonal entries are $\pm1$.

\begin{thm}
\label{thm:signsarenothingtofear}If $B_{11}$ and $B_{21}$ are upper
bidiagonal, $B_{12}$ and $B_{22}$ are lower bidiagonal, and\[
\left[\begin{array}{cc}
B_{11} & B_{12}\\
B_{21} & B_{22}\end{array}\right]\]
is real orthogonal (but not necessarily having the sign pattern required
for bidiagonal block form), then there exist diagonal signature matrices
$D_{1}$, $D_{2}$, $E_{1}$, $E_{2}$ such that\[
\left[\begin{array}{cc}
D_{1}\\
 & D_{2}\end{array}\right]\left[\begin{array}{cc}
B_{11} & B_{12}\\
B_{21} & B_{22}\end{array}\right]\left[\begin{array}{cc}
E_{1}\\
 & E_{2}\end{array}\right]\]
 is a real orthogonal matrix in bidiagonal block form.
\end{thm}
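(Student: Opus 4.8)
The plan is to feed the given matrix back into Algorithm~\ref{alg:bidiagonalize} and observe that, because it is \emph{already} bidiagonal, the algorithm can do nothing but rescale rows and columns by $\pm1$. Put $m=2q$, $p=q$, and write $M=\left[\begin{smallmatrix}B_{11}&B_{12}\\B_{21}&B_{22}\end{smallmatrix}\right]$; since $M$ is real orthogonal it is in particular unitary, so $\mathbf{bidiagonalize}(M,q,q)$ applies, and by its correctness (Specification~\ref{spec:bidiagonalize}) it returns $\theta^{(0)}\in[0,\frac{\pi}{2}]^{q}$, $\phi^{(0)}\in[0,\frac{\pi}{2}]^{q-1}$ and unitary $P_{1},P_{2},Q_{1},Q_{2}$ with $M=\left[\begin{smallmatrix}P_{1}&\\&P_{2}\end{smallmatrix}\right]\left[\begin{smallmatrix}B_{11}(\theta^{(0)},\phi^{(0)})&B_{12}(\theta^{(0)},\phi^{(0)})\\B_{21}(\theta^{(0)},\phi^{(0)})&B_{22}(\theta^{(0)},\phi^{(0)})\end{smallmatrix}\right]\left[\begin{smallmatrix}Q_{1}&\\&Q_{2}\end{smallmatrix}\right]^{*}$; no identity blocks occur since $p-q=m-p-q=0$. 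If $P_{1},P_{2},Q_{1},Q_{2}$ turn out to be diagonal signature matrices, then $D_{1}=P_{1}^{*}$, $D_{2}=P_{2}^{*}$, $E_{1}=Q_{1}$, $E_{2}=Q_{2}$ are signature matrices, and rearranging the displayed identity gives $\left[\begin{smallmatrix}D_{1}&\\&D_{2}\end{smallmatrix}\right]M\left[\begin{smallmatrix}E_{1}&\\&E_{2}\end{smallmatrix}\right]=\left[\begin{smallmatrix}B_{11}(\theta^{(0)},\phi^{(0)})&B_{12}(\theta^{(0)},\phi^{(0)})\\B_{21}(\theta^{(0)},\phi^{(0)})&B_{22}(\theta^{(0)},\phi^{(0)})\end{smallmatrix}\right]$, which is in bidiagonal block form (Definition~\ref{def:bdb}); the theorem follows.

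It therefore remains to show that $P_{1},P_{2},Q_{1},Q_{2}$ are signature matrices. I would prove, by induction over the sub-steps of the loop, the joint assertion: (i) the working matrix $Y$ always has the bidiagonal structure of $M$ (left blocks upper bidiagonal, right blocks lower bidiagonal, with no sign constraint); and (ii) each vector passed to $\mathbf{house}$, namely $u_{1}$, $u_{2}$, $v_{1}^{*}$, $v_{2}^{*}$, is supported on its first coordinate only, so that $\mathbf{house}$ returns a diagonal signature matrix (recall $M$, hence every $Y$ produced, is real), whence each $P_{\ell}^{(i)}$ and $Q_{\ell}^{(i)}$ is a diagonal signature matrix. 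Claim~(i) implies~(ii) at a given sub-step: the restriction of an upper bidiagonal matrix's $i$th column to rows $i,\dots,q$ is the lone diagonal entry, the restriction of a lower bidiagonal matrix's $(i-1)$st column to those same rows is the lone subdiagonal entry, a row of an upper bidiagonal matrix restricted to columns $i+1,\dots,q$ is the lone superdiagonal entry, and a row of a lower bidiagonal matrix restricted to columns $i,\dots,q$ is the lone diagonal entry; since each of $u_{1},u_{2},v_{1},v_{2}$---in every case, $i=1$ or $i>1$, $i<q$ or $i=q$---is a linear combination, with scalar coefficients computed by the algorithm, of vectors of precisely these shapes, it is supported on its first slot. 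Conversely~(ii) implies~(i) at the next sub-step, because each transformation the algorithm applies to $Y$ is then block diagonal with diagonal signature blocks, and multiplication by a diagonal matrix on either side preserves every zero pattern. The base case is~(i) for $Y=M$, the hypothesis. Finally, a product of diagonal signature matrices is a diagonal signature matrix, so $P_{1}=P_{1}^{(1)}\cdots P_{1}^{(q)}$ and its three companions are signature matrices, and a real signature matrix equals its conjugate transpose, so $D_{1}=P_{1}^{*}$, etc., are too. The cases $q=0$ (vacuous) and $q=1$ (where $\mathbf{bidiagonalize}$ returns $\left[\begin{smallmatrix}c_{1}&s_{1}\\-s_{1}&c_{1}\end{smallmatrix}\right]$ with scalar $\pm1$ factors) are immediate.

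The one delicate point is the index bookkeeping inside the inductive step: at each of the up-to-four $\mathbf{house}$ calls per iteration one must match the algorithm's index windows---$Y(i{:}p,i)$, $Y(i{:}p,q{-}1{+}i)$, $Y(i,i{+}1{:}q)$, $Y(i,q{+}i{:}m)$ and their lower-block analogues---against the (structurally stable) bidiagonal sparsity pattern of the current $Y$, and in particular check that it is always the \emph{first} coordinate that survives, so that $\mathbf{house}$ yields a diagonal matrix rather than a genuine reflector. This is routine but has to be verified separately for the several cases. Everything else---closure of signature matrices under products and conjugate transpose, and the final rearrangement---is immediate.
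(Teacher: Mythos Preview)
Your proposal is correct and follows essentially the same approach as the paper's proof: run $\mathbf{bidiagonalize}$ on the already-bidiagonal input and observe that every Householder reflector collapses to a diagonal signature matrix. You supply considerably more detail on the inductive bookkeeping than the paper does, but the underlying idea is identical.
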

\begin{proof}
Run $\mathbf{bidiagonalize}$. $P_{1}$, $P_{2}$, $Q_{1}$, and $Q_{2}$
are products of Householder reflectors that are in fact diagonal signature
matrices because the input matrix already has the correct zero/nonzero
pattern. Let $D_{1}=P_{1}$, $D_{2}=P_{2}$, $E_{1}=Q_{1}$, and $E_{2}=Q_{2}$.
$\qed$
\end{proof}

\section{\label{sec:svdstep}Reviewing and extending the SVD step}

In \cite{MR0183105,MR1553974}, Golub, Kahan, and Reinsch developed
a bulge-chasing method for computing the SVD of a bidiagonal matrix.
This method, as modified by Demmel and Kahan \cite{MR1057146}, is
implemented as one of the most heavily used SVD routines in LAPACK.
Phase II of our CSD algorithm is based on this SVD method.

Given a real upper bidiagonal matrix $B$ and a shift $\sigma\geq0$,
the SVD step of Golub and Kahan applies a unitary equivalence $\bar{B}=S^{T}BT$
derived from QR iteration on $B^{T}B-\sigma^{2}I$. The step is designed
so that iteration will drive a superdiagonal entry of $\bar{B}$ to
zero (especially quickly if the shifts lie near singular values of
$B$).

This section reviews the SVD step of Golub and Kahan. There are two
notable aspects not present in most descriptions.

\begin{itemize}
\item First, a certain left-right symmetry is emphasized---not only is the
SVD step equivalent to a QR step on $B^{T}B-\sigma^{2}I$; it is also
equivalent to a QR step on $BB^{T}-\sigma^{2}I$.
\item Second, the SVD step is extended to handle any number of zeros on
the bidiagonal band of $B$. (The original SVD step of Golub and Kahan
requires special handling as soon as a zero appears. The modification
by Demmel and Kahan relaxes this, but only for zeros on the main diagonal
and only when the shift is $\sigma=0$.) The modification is necessary
in Section~\ref{sec:qrstep}.
\end{itemize}

\subsection{QR step}

The SVD iteration of Golub and Kahan is derived from QR iteration
for symmetric tridiagonal matrices.

Given a real symmetric tridiagonal matrix $A$ and a shift $\lambda\in\mathbb{R}$,
a single QR step is accomplished by computing a QR factorization\begin{equation}
A-\lambda I=QR,\label{eq:qrtridiagonal1}\end{equation}
then reversing the factors and putting $\lambda I$ back,\begin{equation}
\bar{A}:=RQ+\lambda I.\label{eq:qrtridiagonal2}\end{equation}
Note that the resulting $\bar{A}$ is symmetric tridiagonal, because
$RQ=Q^{T}(A-\lambda I)Q$ is upper Hessenberg and symmetric.

Note that if $A$ is \emph{unreduced}, i.e., its subdiagonal entries
are all nonzero, then $Q$ and $R$ are unique up to signs. (Specifically,
every QR factorization is of the form $A-\lambda I=(QD)(DR)$ for
some diagonal signature matrix $D$.) However, if $A$ has zero entries
on its subdiagonal, then making the QR factorization unique requires
extra care. The following definition introduces a {}``preferred''
QR factorization. There are two important points: (1) the existence
of the forthcoming CSD step relies on the uniqueness of the preferred
QR factorization, and (2) the handling of noninvertible $A$ supports
the CSD deflation procedure. Below, the notation $A\oplus B$ refers
to the block diagonal matrix $\left[\begin{smallmatrix}A&\\&B\end{smallmatrix}\right]$.

\begin{defn}
\label{def:preferredqr}Let $A$ be an $m$-by-$m$ real symmetric
tridiagonal matrix. Express $A$ as \[
A=\left[\begin{array}{cccc}
A_{1}\\
 & A_{2}\\
 &  & \ddots\\
 &  &  & A_{r}\end{array}\right],\]
in which each $A_{i}$ is either an unreduced tridiagonal matrix or
$1$-by-$1$. A \emph{preferred QR factorization }for $A$ is a QR
factorization with a special form. There are two cases.
\begin{description}
\item [{Case~1:~$A$~is~invertible.}] Then a preferred QR factorization
has the form

\[
A=\left[\begin{array}{cccc}
Q_{1}\\
 & Q_{2}\\
 &  & \ddots\\
 &  &  & Q_{r}\end{array}\right]\left[\begin{array}{cccc}
R_{1}\\
 & R_{2}\\
 &  & \ddots\\
 &  &  & R_{r}\end{array}\right]\]
and satisfies the following conditions:

\begin{enumerate}
\item every $Q_{i}$ has the same number of rows and columns as $A_{i}$,
is real orthogonal and upper Hessenberg, and has positive subdiagonal
and determinant one (unless it is 1-by-1, in which case it equals
the scalar 1), and
\item every $R_{i}$ has the same number of rows and columns as $A_{i}$
and is upper triangular.
\end{enumerate}
\item [{Case~2:~$A$~is~not~invertible.}] Define $Q_{1},\dots,Q_{r}$
and $R_{1},\dots,R_{r}$ as in the first case, let $k$ be the least
index identifying a noninvertible $A_{k}$, and let $l$ be the index
of this block's last row and column in the overall matrix $A$. (Note
that the first zero diagonal entry of $R_{1}\oplus\cdots\oplus R_{r}$
must be in position $(l,l)$.) Also, let\[
P=\left[\begin{array}{ccc}
I_{l-1} & 0 & 0\\
0 & 0 & \pm1\\
0 & I_{m-l} & 0\end{array}\right],\]
with the sign chosen so that $\det(P)=1$. A preferred QR factorization
for $A$ is $A=QR$ with $Q=(Q_{1}\oplus\cdots\oplus Q_{k}\oplus I_{m-l})P$
and $R=P^{T}(R_{1}\oplus\cdots\oplus R_{k}\oplus A_{k+1}\oplus\cdots\oplus A_{r})$.
\end{description}
\end{defn}
\begin{thm}
The terminology is valid: every {}``preferred QR factorization''
really is a QR factorization.
\end{thm}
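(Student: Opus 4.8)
The plan is to verify the three requirements of a QR factorization --- $Q$ real orthogonal, $R$ upper triangular, and $A=QR$ --- for each of the two cases of Definition~\ref{def:preferredqr}. In Case~1 there is almost nothing to do: the product identity $A=(Q_1\oplus\cdots\oplus Q_r)(R_1\oplus\cdots\oplus R_r)$ is built into the stipulated form, a block-diagonal matrix is orthogonal precisely when each diagonal block is, and it is upper triangular precisely when each diagonal block is. (For completeness one should observe that suitable $Q_i,R_i$ actually exist: each unreduced $A_i$ is upper Hessenberg, Givens-based QR yields an orthogonal upper Hessenberg factor with nonzero subdiagonal, and a diagonal signature matrix can be chosen to make the subdiagonal positive and, via the one remaining free sign, to force $\det Q_i=1$ without disturbing the triangularity of $R_i$.) So the real content of the theorem lies in Case~2.

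Write $\tilde Q=Q_1\oplus\cdots\oplus Q_k\oplus I_{m-l}$ and $\tilde R=R_1\oplus\cdots\oplus R_k\oplus A_{k+1}\oplus\cdots\oplus A_r$, so that the claimed factors are $Q=\tilde Q P$ and $R=P^{T}\tilde R$. Orthogonality of $Q$ is immediate, since $\tilde Q$ is block-diagonal with orthogonal blocks and $P$ is a signed permutation matrix. For the product identity, note that $P$ is orthogonal, so $QR=\tilde Q P P^{T}\tilde R=\tilde Q\tilde R$; now $\tilde Q$ and $\tilde R$ each split as an $l$-by-$l$ block followed by an $(m-l)$-by-$(m-l)$ block (using that $A_{k+1}\oplus\cdots\oplus A_r$ has size $m-l$), the $l$-by-$l$ blocks being $Q_1\oplus\cdots\oplus Q_k$ and $R_1\oplus\cdots\oplus R_k$ and the lower block of $\tilde Q$ being the identity, so $\tilde Q\tilde R=(A_1\oplus\cdots\oplus A_k)\oplus(A_{k+1}\oplus\cdots\oplus A_r)=A$.

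The one genuine obstacle is to show that $R=P^{T}\tilde R$ is upper triangular, which $\tilde R$ itself is \emph{not}: its rows $l+1,\dots,m$ carry the tridiagonal blocks $A_{k+1},\dots,A_r$, whose subdiagonals are nonzero. The key observation is that $P$ cyclically shifts the coordinates $l,l+1,\dots,m$, so left multiplication by $P^{T}$ fixes rows $1,\dots,l-1$, carries rows $l+1,\dots,m$ upward into rows $l,\dots,m-1$, and sends row~$l$ down to row~$m$ (up to a sign). I would then check three things. First, rows $1,\dots,l-1$ of $R$ are the corresponding rows of $R_1\oplus\cdots\oplus R_k$ and hence have nothing to the left of the diagonal. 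Second, for $l\le i\le m-1$, row~$i$ of $R$ is row~$i+1$ of $\tilde R$; that row lives in the part of $\tilde R$ occupying columns $l+1,\dots,m$, and within the tridiagonal band its nonzeros sit no further left than the subdiagonal entry in column~$i$ --- exactly the diagonal position of row~$i$ of $R$ --- so the shift has placed the tridiagonal band on and above the main diagonal, with nothing below it. Third, row~$m$ of $R$ is $\pm$(row~$l$ of $\tilde R$), namely the last row of $R_k$; here I would justify the parenthetical claim in the definition, that the first zero on the diagonal of $R_1\oplus\cdots\oplus R_r$ lies at $(l,l)$, using that $A_1,\dots,A_{k-1}$ are invertible (so $R_1,\dots,R_{k-1}$ have nonzero diagonals) and that $A_k$ is a singular block which is either $1$-by-$1$ or unreduced tridiagonal --- in which case exactly the trailing pivot of $R_k$ vanishes --- so the last row of the upper triangular $R_k$ is zero and therefore so is row~$m$ of $R$. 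Together, these three facts give $R$ upper triangular.

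The only point needing care beyond bookkeeping is the assertion, used in the third check, that a singular unreduced tridiagonal matrix has its zero pivot in the last position --- equivalently, that its first $n-1$ QR pivots are nonzero. This holds because the first $n-1$ columns of an unreduced Hessenberg matrix are linearly independent (deleting the top row leaves a lower triangular matrix with the nonzero subdiagonal entries on its diagonal), so the leading $(n-1)$-by-$(n-1)$ corner of $R$ is invertible; the last pivot is then $\pm\det(A_k)$ divided by the product of the earlier pivots, and this vanishes exactly when $A_k$ is singular. With that in hand, the proof is complete.
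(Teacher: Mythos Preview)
Your argument is correct. The paper itself omits the proof entirely, dismissing both this theorem and the following existence/uniqueness theorem with the single sentence ``The proofs are straightforward,'' so there is no detailed argument to compare against; your write-up supplies exactly the verification the paper leaves to the reader, and the one nontrivial point you isolate---that the last row of $R_k$ vanishes because an unreduced Hessenberg matrix has its first $n-1$ columns independent---is handled cleanly.
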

{}

\begin{thm}
If $A$ is a real symmetric tridiagonal matrix, then a preferred QR
factorization $A=QR$ exists and is unique. 
\end{thm}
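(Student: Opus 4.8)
The plan is to follow the case split of Definition~\ref{def:preferredqr} and, in each case, reduce the problem to a single unreduced (or $1$-by-$1$) diagonal block. First a preliminary point about which decomposition $A = A_1 \oplus \cdots \oplus A_r$ is meant: a block-diagonal structure $A = A_1 \oplus \cdots \oplus A_r$ forces every block boundary to fall at a position where the subdiagonal (equivalently superdiagonal) entry of $A$ vanishes, and the requirement that each $A_i$ be unreduced or $1$-by-$1$ forces a cut at \emph{every} such position. Hence this decomposition, and with it the blocks $A_i$ themselves, are uniquely determined by $A$. Since a preferred QR factorization is by definition block diagonal conformally with the $A_i$, it suffices to analyze the factorization one block at a time and then reassemble.

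Case~1: $A$ invertible, so every $A_i$ is invertible. For a $1$-by-$1$ block the only choice meeting the conditions is $Q_i = 1$, $R_i = A_i$. For an unreduced tridiagonal block $A_i$, a QR factorization exists; since $A_i$ is invertible so is the upper-triangular $R_i$, so $Q_i = A_i R_i^{-1}$ is automatically upper Hessenberg. The only remaining freedom is $(Q_i, R_i) \mapsto (Q_i D, D R_i)$ for a diagonal signature matrix $D$. The key computation is that, because $A_i$ is unreduced, each of the $n_i - 1$ subdiagonal entries of $Q_i$ is nonzero: the span of the first $j$ columns of $Q_i$ equals the span of the first $j$ columns of $A_i$, which genuinely involves coordinate $j+1$, so the column $q_j$ cannot lie in $\operatorname{span}(e_1,\dots,e_j)$. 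Consequently the requirement ``$Q_i$ has positive subdiagonal'' determines the first $n_i - 1$ diagonal entries of $D$, and then ``$\det Q_i = 1$'' determines the last one. This gives existence and uniqueness per block, and reassembling block-diagonally gives existence and uniqueness of the preferred QR factorization of $A$.

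Case~2: $A$ not invertible. Let $k$ be the least index with $A_k$ singular and let $l$ be the global index of that block's last row and column. For $i < k$, $A_i$ is invertible, so $R_i$ has no zero on its diagonal. For $A_k$: if it is $1$-by-$1$ it equals $0$ and $R_k = (0)$; if it is unreduced tridiagonal, its first $n_k - 1$ columns are linearly independent, because the submatrix of $A_k$ in rows $2,\dots,n_k$ and columns $1,\dots,n_k-1$ is upper triangular with the nonzero subdiagonal entries of $A_k$ on its diagonal, so $\rank A_k = n_k - 1$ and in $R_k = Q_k^T A_k$ only the last diagonal entry can vanish. Either way the first zero diagonal entry of $R_1 \oplus \cdots \oplus R_r$ sits at $(l,l)$, as the definition asserts. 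The factors $Q_1,\dots,Q_k$ and $R_1,\dots,R_k$ are unique by the Case~1 argument (note the proof that the first $n_k-1$ subdiagonal entries of $Q_k$ are nonzero used unreducedness, not invertibility), the index $l$ is forced, and $P$ is the unique matrix of the stated shape with $\det P = 1$; hence $Q = (Q_1 \oplus \cdots \oplus Q_k \oplus I_{m-l})P$ and $R = P^{T}(R_1\oplus\cdots\oplus R_k\oplus A_{k+1}\oplus\cdots\oplus A_r)$ are forced. For existence it remains to check that this is genuinely a QR factorization: $QR = A$ follows at once from $PP^{T} = I$, and one verifies that left multiplication by $P^{T}$ cyclically shifts rows $l,\dots,m$ so that the subdiagonal entries of the tridiagonal tail $A_{k+1}\oplus\cdots\oplus A_r$ are carried onto the main diagonal, making $R$ upper triangular.

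I expect the bookkeeping in Case~2 to be the main obstacle: pinning down that the single zero of $R_k$ lies in the last position of its block (which is exactly what makes the construction with $P$ coherent), and checking that applying $P^{T}$ to the rows of the tridiagonal tail yields an upper-triangular $R$. The Case~1 uniqueness argument, though more routine, still hinges on the small lemma that unreducedness of $A_i$ propagates to a nonvanishing subdiagonal of $Q_i$, which must be stated carefully.
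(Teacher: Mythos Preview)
Your proof is correct and supplies exactly the details the paper omits: the paper's own ``proof'' is the single sentence ``The proofs are straightforward,'' so there is no approach to compare against. Your handling of both cases is sound, including the key observation that unreducedness (not invertibility) forces the subdiagonal of $Q_i$ to be nonvanishing, and the bookkeeping in Case~2 showing that $P^{T}$ shifts the tridiagonal tail into upper-triangular position.
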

The proofs are straightforward.

\begin{thm}
\label{thm:qrstepperfectshift}Apply the QR step (\ref{eq:qrtridiagonal1})--(\ref{eq:qrtridiagonal2})
to a real symmetric tridiagonal matrix $A$ using the preferred QR
factorization. If the shift $\lambda$ is an eigenvalue of $A$, then
deflation occurs immediately: the resulting $\bar{A}$ has the form\[
\bar{A}=\left[\begin{array}{c|c}
\mbox{\huge{$*$}}\\
\hline  & \lambda\end{array}\right].\]

\end{thm}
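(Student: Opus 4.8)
The plan is to reduce to the unreduced, invertible case and there exploit the well-known "perfect shift" phenomenon for the QR algorithm. First I would dispose of the reducible case. Write $A = A_1 \oplus \cdots \oplus A_r$ as in Definition~\ref{def:preferredqr}, and observe that $\lambda$, being an eigenvalue of $A$, is an eigenvalue of at least one block $A_j$. Since the QR step respects the block-diagonal structure (each $Q_i, R_i$ acts within its block, and in Case~2 the permutation $P$ only moves the single zero-pivot row to the bottom), it suffices to show that after the step, the block containing a copy of $\lambda$ deflates a trailing $\lambda$. So I may assume $A$ itself is unreduced.

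Next, split into the two cases of the definition. If $A$ is unreduced but \emph{singular}, then $\lambda = 0$ must be the eigenvalue in question (an unreduced tridiagonal matrix has simple eigenvalues, so $\lambda$ could be any eigenvalue, but the interesting subcase is when $A - \lambda I$ is singular — which is exactly the hypothesis). Actually the cleaner split is: the hypothesis says $A - \lambda I$ is singular. In the preferred factorization of $A - \lambda I$ (not of $A$ — note the QR step factors $A - \lambda I$), the matrix $A-\lambda I$ is unreduced symmetric tridiagonal and singular, so Case~2 applies to it: $R = P^{T}(R_1 \oplus A_{k+1}\oplus\cdots)$ has its unique zero pivot in the last position after the permutation $P$ moves row $l$ to the bottom; since $A-\lambda I$ is a single unreduced block, $l = m$ already and $P = I$ (up to the sign making $\det P = 1$, which for $l=m$ is just $I_m$). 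Wait — I need $l$ to be the last row of the first singular block, and with one block $k=1$, $l=m$. Then $R = R_1 \oplus (\text{empty}) = R_1$ with $R_1$ upper triangular having a zero in position $(m,m)$, and $Q = Q_1 \oplus I_0 = Q_1$. Reversing, $\bar A - \lambda I = R_1 Q_1 + (\lambda I - \lambda I)$... let me restate: $\bar A = RQ + \lambda I$. The matrix $RQ$ has last row equal to (last row of $R$) times $Q$, and the last row of $R$ is zero, so the last row of $RQ$ is zero; by symmetry of $RQ + \lambda I - \lambda I = RQ$ (it is symmetric tridiagonal) the last column is also zero except possibly the $(m,m)$ entry, which is $0$. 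Hence $\bar A = RQ + \lambda I$ has last row and column equal to $\lambda e_m^{T}$ and $\lambda e_m$, i.e., $\bar A = * \oplus \lambda$, which is the claim.

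For the remaining case, $A - \lambda I$ is unreduced and \emph{invertible} — but this contradicts the hypothesis that $\lambda$ is an eigenvalue, unless $A$ is reducible, which we excluded. So in fact the unreduced case forces $A - \lambda I$ to be singular, and the argument above is complete; the reduction in the first paragraph then finishes the general case. The main obstacle I anticipate is purely bookkeeping: tracking exactly where the zero pivot of $R$ lands after the permutation $P$ in Case~2 of Definition~\ref{def:preferredqr}, and verifying that the symmetry of $RQ$ (which holds because $RQ = Q^{T}(A-\lambda I)Q$ is a congruence of a symmetric matrix, and is tridiagonal because it is upper Hessenberg and symmetric) upgrades "last row of $RQ$ is zero" to "last row \emph{and} column of $\bar A - \lambda I$ vanish." Once that symmetry observation is in hand the deflation is immediate, so I would state it carefully as the key lemma and keep the index-chasing brief.
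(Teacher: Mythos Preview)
Your core argument---the last row of $R$ is zero, hence the last row of $RQ$ is zero, and then symmetry of $RQ = Q^{T}(A-\lambda I)Q$ forces the last column of $RQ$ to vanish as well---is exactly the paper's proof. So the essential idea is right and matches the paper.

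However, your framing via ``reduction to the unreduced case'' is both unnecessary and, as stated, incorrect. You write that ``it suffices to show that after the step, the block containing a copy of $\lambda$ deflates a trailing $\lambda$,'' but that is \emph{not} what the preferred QR factorization does in Case~2: the permutation $P$ moves row $l$ (the zero row at the bottom of block $k$) all the way to row $m$ of the full matrix, and $Q$ is defined so that blocks $k+1,\dots,r$ are left untouched by $Q_1\oplus\cdots\oplus Q_k\oplus I$. The deflation therefore appears at position $(m,m)$ of the whole $\bar A$, not at the bottom of block $k$; the QR step does \emph{not} respect the block-diagonal structure here. Your later remark that the ``main obstacle is purely bookkeeping: tracking exactly where the zero pivot of $R$ lands after the permutation $P$'' is the correct instinct, but it undercuts the reduction you just claimed---if $P$ moves things across blocks, you cannot simply assume $A$ is unreduced.

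The fix is to drop the reduction entirely and argue directly in the general case, exactly as the paper does: Case~2 applies to $A-\lambda I$; the last row of $R_k$ is zero (since $A_k-\lambda I$ is unreduced and singular, its first $l-1$ columns are independent); $P^{T}$ carries that zero row to the bottom of $R$; then your symmetry argument finishes. Your treatment of the special unreduced case ($l=m$, $P=I$) is a correct instance of this, but it does not by itself cover the general situation.
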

\begin{proof}
$A-\lambda I$ is not invertible, so its preferred QR factorization
satisfies Case 2 of Definition \ref{def:preferredqr}. The rest of
the proof uses the notation of that definition. The last row of $R_{k}$
contains all zeros, so row $l$ of $R_{1}\oplus\cdots\oplus R_{k}\oplus(A_{k+1}-\lambda I)\oplus\cdots\oplus(A_{r}-\lambda I)$
contains all zeros. The permutation matrix $P$ is designed to slide
this row of all zeros to the bottom of $R$, and hence the last row
of $RQ$ also contains all zeros. By symmetry, the last column of
$RQ$ also contains only zeros, and so $RQ$ is block diagonal with
a 1-by-1 block equaling zero in the bottom-right corner. Adding $\lambda I$
to produce $\bar{A}=RQ+\lambda I$ makes the bottom-right entry equal
$\lambda$. $\qed$
\end{proof}
The orthogonal factor $Q$ in a preferred QR factorization can be
expressed as a product of Givens rotations in a particularly simple
way. A 2-by-2 \emph{Givens rotation} with an angle of $\theta$ is
a matrix $\left[\begin{smallmatrix} c & -s \\ s & c \end{smallmatrix}\right]$
in which $c=\cos\theta$ and $s=\sin\theta$. More generally, an $m$-by-$m$
\emph{Givens rotation} is an $m$-by-$m$ real orthogonal matrix with
a 2-by-2 principal submatrix that is a Givens rotation and whose principal
submatrix lying in the other rows and columns is the $(m-2)$-by-$(m-2)$
identity matrix.

\begin{thm}
\label{thm:productofgivens}The orthogonal factor $Q$ in a preferred
QR factorization of an $m$-by-$m$ real symmetric tridiagonal matrix
can be expressed uniquely as a product of Givens rotations $G_{1}\cdots G_{m-1}$
in which $G_{j}$ rotates columns $j$ and $j+1$ through an angle
in $[0,\pi)$.
\end{thm}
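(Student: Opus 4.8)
The plan is to show that the orthogonal factor $Q$ from a preferred QR factorization is upper Hessenberg with nonnegative subdiagonal, and that any such matrix with determinant one has a unique factorization into Givens rotations of the stated kind. First I would dispose of the structural facts about $Q$. In Case~1 of Definition~\ref{def:preferredqr}, $Q=Q_1\oplus\cdots\oplus Q_r$, each $Q_i$ is upper Hessenberg with positive subdiagonal (or the scalar $1$), so the block-diagonal assembly $Q$ is upper Hessenberg and its subdiagonal entries are either the positive subdiagonals of the $Q_i$ or the zeros that appear between consecutive blocks; in particular every subdiagonal entry is $\ge 0$, and $\det Q=\prod_i\det Q_i=1$. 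In Case~2, $Q=(Q_1\oplus\cdots\oplus Q_k\oplus I_{m-l})P$; the factor in front is again upper Hessenberg with nonnegative subdiagonal and determinant one by the Case~1 reasoning, and right-multiplication by the permutation $P$ (which cyclically moves column $l$ to the far right) preserves the upper Hessenberg form below row $l$ and leaves the relevant subdiagonal entries nonnegative, while $\det P=1$ by construction keeps $\det Q=1$. So in both cases $Q$ is a real orthogonal upper Hessenberg matrix with nonnegative subdiagonal and determinant one. (I should double-check the permutation step carefully, since that is where the Hessenberg structure is most at risk.)

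Next I would prove the key lemma: every real orthogonal upper Hessenberg matrix $H$ with nonnegative subdiagonal entries and $\det H=1$ factors uniquely as $G_1\cdots G_{m-1}$ with $G_j$ a Givens rotation in coordinates $(j,j+1)$ through an angle in $[0,\pi)$. This is done by induction on $m$, peeling off $G_1$ first. Look at the first column $(H_{11},H_{21},0,\dots,0)^T$; since $H_{21}\ge 0$, there is a unique angle $\alpha\in[0,\pi)$ (equivalently, a unique Givens rotation $G_1$ acting on rows/columns $1,2$ with nonnegative $(2,1)$ sine entry) such that $G_1^{T}H$ has first column $(*,0,\dots,0)^T$; orthogonality then forces that first column to be $e_1$. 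Because $G_1^{T}H$ is orthogonal with first column $e_1$, its first row is also $e_1^{T}$, so $G_1^{T}H=1\oplus H'$ with $H'$ an $(m-1)$-by-$(m-1)$ real orthogonal matrix. Multiplying on the left by $G_1$ changes only rows $1,2$, so $H'$ inherits upper Hessenberg form and nonnegative subdiagonal from $H$, and $\det H'=\det(G_1^{T}H)=\det H=1$. Apply the inductive hypothesis to $H'$ to get $H'=G_2'\cdots G_{m-1}'$ with $G_j'$ a Givens rotation in coordinates $(j-1,j)$ of the $(m-1)$-dimensional space; padding each with a leading $1$ gives Givens rotations $G_2,\dots,G_{m-1}$ in the ambient coordinates $(j,j+1)$, and $H=G_1 G_2\cdots G_{m-1}$.

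For uniqueness, I would argue that the factorization is forced at each stage: in any product $G_1\cdots G_{m-1}$ of the prescribed shape, $G_{m-1}$ does not touch row $1$ and columns $1$; more usefully, only $G_1$ has a nonzero $(2,1)$ entry, so the $(2,1)$ entry of the whole product equals $\sin\alpha_1$ times the $(1,1)$ entry of $G_2\cdots G_{m-1}$, and a short computation (or the uniqueness of the peeling step above) pins down $G_1$; then $G_1^{T}H=1\oplus H'$ and induction finishes. The one subtlety is the endpoint angle convention $[0,\pi)$: the sign of the subdiagonal entry $H_{21}\ge 0$ together with the constraint $\det G_1=1$ is exactly what makes the choice of $\alpha_1$ unique rather than unique-up-to-sign, so I would make sure the statement of the peeling step uses nonnegativity of the subdiagonal entry, not positivity, to cover the deflated ($H_{21}=0$, i.e. $\alpha_1=0$) case. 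The main obstacle I anticipate is purely bookkeeping: verifying that the permutation $P$ in Case~2 really does leave $Q$ upper Hessenberg with nonnegative subdiagonal, and matching the index ranges when translating the inductive Givens factorization back to ambient coordinates. The analytic content is otherwise routine.
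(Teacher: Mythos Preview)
Your approach is different from the paper's: the paper proves existence by forward reference to Algorithm~\ref{alg:implicitqrtridiagonal} (Theorem~\ref{thm:implicitqrworks} shows that the bulge-chasing rotations $G_1,\dots,G_{m-1}$ produced there, each with angle in $[0,\pi)$ by the definition of $\mathbf{givens}$, multiply to the preferred $Q$), and sketches uniqueness by the same peeling idea you use. You instead try to characterize $Q$ intrinsically and deduce both existence and uniqueness from a structural lemma. That is a reasonable strategy, but your lemma as stated is false, and the gap is not just bookkeeping.

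The counterexample is $H=-I_2$: it is real orthogonal, trivially upper Hessenberg, has subdiagonal entry $0\ge 0$, and $\det(-I_2)=1$, yet it is not a Givens rotation with angle in $[0,\pi)$ (the only candidate angle is $\pi$). More generally, $\diag(-1,-1,1,\dots,1)$ fails for any $m\ge 2$. The error in your argument is the sentence ``orthogonality then forces that first column to be $e_1$'': orthogonality only forces $\pm e_1$, and when $H_{21}=0$ and $H_{11}=-1$ you get $-e_1$, after which $G_1^TH=(-1)\oplus H'$ with $\det H'=-1$ and the induction collapses. The global determinant condition $\det H=1$ does not save you here; it is necessary but not sufficient.

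The fix is to strengthen the characterization of $Q$: whenever $Q_{j+1,j}=0$, the matrix is block diagonal at that index (by orthogonality plus Hessenberg), and you need the leading block $Q[1{:}j,1{:}j]$ to have determinant $+1$, not merely the whole matrix. This is exactly what the preferred QR construction guarantees (in Case~1 each diagonal block $Q_i$ has determinant one; in Case~2 the only zero subdiagonal entries occur at block boundaries among $Q_1,\dots,Q_{k-1}$, where the same reasoning applies, while the permutation tail contributes strictly positive subdiagonal entries equal to $1$). With that extra hypothesis your peeling argument goes through: at each stage either the current subdiagonal entry is positive, in which case the first column of the residual is $(\cos\alpha,\sin\alpha,0,\dots,0)^T$ with $\sin\alpha>0$, or it is zero, in which case the residual splits as $(+1)\oplus H'$ and $G_j=I$. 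Your uniqueness argument is fine and is essentially what the paper sketches.
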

\begin{proof}
Existence is proved by Algorithm \ref{alg:implicitqrtridiagonal}
below. Uniqueness is guaranteed by the upper Hessenberg structure:
inductively, the $(j+1,j)$ entry of $Q$ determines $G_{j}$. $\qed$
\end{proof}
\begin{alg}[$\mathbf{givens}$] This algorithm computes a Givens rotation,
with a corner case defined in a particularly important way.

\begin{enumerate}
\item Given a nonzero $2$-by-$1$ vector $x$, $\mathbf{givens}(m,i_{1},i_{2},x)$
constructs an $m$-by-$m$ Givens rotation whose submatrix lying in
rows and columns $i_{1}$ and $i_{2}$ is a 2-by-2 Givens rotation
$G=\left[\begin{smallmatrix} c & -s \\ s & c \end{smallmatrix}\right]$
with angle in $[0,\pi)$ such that $G^{T}x=(\pm\|x\|,0)^{T}$.
\item $\mathbf{givens}(m,i_{1},i_{2},(0,0)^{T})$ is defined to equal $\mathbf{givens}(m,i_{1},i_{2},(0,1)^{T})$---it
rotates through an angle of $\frac{\pi}{2}$.
\end{enumerate}
\end{alg}

The choice to rotate through $\frac{\pi}{2}$ when $x=(0,0)^{T}$
allows the following algorithm to handle Cases 1 and 2 of the preferred
QR factorization in a uniform way.

\begin{alg}[$\mathbf{qr\_step}$]\label{alg:implicitqrtridiagonal}Given
an $m$-by-$m$ real symmetric tridiagonal matrix $A$ and a shift
$\lambda\in\mathbb{R}$, the following algorithm performs one QR step.
See Theorem \ref{thm:implicitqrworks}. It is an extension of the
idea on pp. 418--420 of \cite{MR1417720}.

\lstset{columns=flexible,escapechar=\%,mathescape,numberblanklines=false,numbers=left,numberstyle=\tiny}
\begin{lstlisting}
%$\bar{A} := A-\lambda I$;%
%for $i=1,\dots,m-1$%
    %$v=\begin{cases} \bar{A}(i:i+1,i) & \text{if $i=1$ or $\bar{A}(i:i+1,i-1)$ is the zero vector} \\ \bar{A}(i:i+1,i-1) & \text{otherwise;} \end{cases}$%
%\label{line:computeG}%    %$G_i := \mathbf{givens}(m,i,i+1,v);$%
    %$\bar{A}:=G_i^T\bar{A}G_i$;%
%end for%
%$\bar{A}:=\bar{A}+\lambda I$;%
\end{lstlisting}

\end{alg}

\begin{thm}
\label{thm:implicitqrworks}Run Algorithm \ref{alg:implicitqrtridiagonal}
to compute $G_{1},\dots,G_{m-1}$ and $\bar{A}$, and define $Q=G_{1}\cdots G_{m-1}$
and $R=Q^{T}(A-\lambda I)$. Then $A-\lambda I=QR$ is a preferred
QR factorization and (\ref{eq:qrtridiagonal1}) and (\ref{eq:qrtridiagonal2})
are satisfied.
\end{thm}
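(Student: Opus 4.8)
The identity $A - \lambda I = QR$ with $R := Q^{T}(A-\lambda I)$ holds by the very definition of $R$, so the substance of the theorem is threefold: that $R$ is upper triangular (so this is a genuine QR factorization), that it is the \emph{preferred} one of Definition \ref{def:preferredqr}, and that the $\bar A$ returned by Algorithm \ref{alg:implicitqrtridiagonal} satisfies $\bar A = RQ + \lambda I$. The plan is to first record that the algorithm is an implicit scheme: it applies the rotations $G_{1},\dots,G_{m-1}$ to $A$ as a similarity, so that the computed $\bar A$ equals $Q^{T}AQ$. Granting this, the last assertion is immediate, since $RQ + \lambda I = Q^{T}(A-\lambda I)Q + \lambda I = Q^{T}AQ = \bar A$. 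Thus everything reduces to identifying $Q = G_{1}\cdots G_{m-1}$ with the orthogonal factor of the preferred QR factorization of $A - \lambda I$.

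I would treat the generic case first: $A$ unreduced and $\lambda$ not an eigenvalue, so $A - \lambda I$ is invertible. Here the argument is the classical implicit-$Q$ reasoning. The first rotation $G_{1}$ is built by $\mathbf{givens}$ from the first column $(A-\lambda I)e_{1} = ((A)_{11}-\lambda,\,(A)_{21},0,\dots,0)^{T}$, so it agrees, up to the sign of its angle, with the first Givens rotation of any QR factorization of $A - \lambda I$; consequently $Qe_{1} = G_{1}e_{1}$ is parallel to the first column of the preferred orthogonal factor $\widehat Q$. Since $G_{2},\dots,G_{m-1}$ rotate only columns $\geq 2$ and are chosen to chase the bulge, $Q$ is upper Hessenberg, has determinant one, and has nonnegative (here in fact positive) subdiagonal because every $\mathbf{givens}$ angle lies in $[0,\pi)$; moreover $Q^{T}AQ = \bar A$ is unreduced tridiagonal. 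By the uniqueness in Theorem \ref{thm:productofgivens} (equivalently, the implicit $Q$ theorem), $Q = \widehat Q$, whence $R = Q^{T}(A-\lambda I)$ is upper triangular and the factorization is preferred.

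The remaining work is to extend this to reduced $A$ and to singular $A - \lambda I$, which is where the $\mathbf{givens}$ convention of rotating by $\frac{\pi}{2}$ on the zero vector earns its keep. When $A$ is reduced, a zero subdiagonal entry causes the bulge chase to decouple exactly along the block decomposition $A_{1}\oplus\cdots\oplus A_{r}$, so the previous paragraph applies blockwise. When $A-\lambda I$ is singular, a zero appears on the diagonal of $R$ in position $(l,l)$ (as recorded in Definition \ref{def:preferredqr}), and at that stage the $2$-by-$1$ vector handed to $\mathbf{givens}$ is $(0,0)^{T}$; the resulting $\frac{\pi}{2}$ rotation, followed by the continued chase, is precisely the permutation $P$ that Case~2 of the definition uses to slide the zero row to the bottom, with $\det P = 1$ matching the determinant-one normalization. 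I expect this local analysis around row and column $l$ — verifying that the $\frac{\pi}{2}$ rotation together with the subsequent rotations reconstitutes $Q = (Q_{1}\oplus\cdots\oplus Q_{k}\oplus I_{m-l})P$ and $R = P^{T}(R_{1}\oplus\cdots\oplus R_{k}\oplus A_{k+1}\oplus\cdots\oplus A_{r})$ — to be the main obstacle; it is elementary but requires careful bookkeeping of a handful of rows and columns. Once it is in place, $A - \lambda I = QR$ is a preferred QR factorization, and combined with $\bar A = Q^{T}AQ$ from the first paragraph, both (\ref{eq:qrtridiagonal1}) and (\ref{eq:qrtridiagonal2}) follow.
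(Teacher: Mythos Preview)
Your outline is correct and follows essentially the same route as the paper: the standard implicit-$Q$ argument for the unreduced invertible case, a blockwise extension when $A$ is reduced (which the paper frames as an explicit induction on the number $r$ of unreduced blocks, noting that the rotation at each block boundary is the identity because the vector handed to $\mathbf{givens}$ there is $(\text{nonzero},0)^{T}$), and the $\tfrac{\pi}{2}$ convention of $\mathbf{givens}$ in the singular case. The paper's proof carries out precisely the bookkeeping you flag as the main obstacle, showing by induction that once the $l$th row of the working matrix vanishes every subsequent $G_{j}$ is a $\tfrac{\pi}{2}$ rotation, so that their product realizes the permutation $P$ of Definition~\ref{def:preferredqr}.
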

\begin{proof}
The proof is broken into two cases.
\begin{description}
\item [{Case~1:~$\lambda$~is~not~an~eigenvalue~of~$A$.}] The
proof is by induction on $r$, the number of unreduced blocks of $A$.

The base case is $r=1$. In this case, the algorithm executes the
usual bulge-chasing QR step. See \cite{MR1417720}.

For the inductive step, suppose $r>1$ and assume that the theorem
has been proved for smaller values of $r$. Let $s$ be the number
of rows and columns in the final block $A_{r}$. The first $m-s-1$
executions of the loop neither observe nor modify the final $s$ rows
or columns of $\bar{A}$ and by induction compute $Q_{1},\dots,Q_{r-1}$
and $R_{1},\dots,R_{r-1}$ of the preferred QR factorization. At the
beginning of the loop with $i=m-s$, the $(m-s,m-s-1)$-entry of $\bar{A}$
is nonzero because $A-\lambda I$ is invertible, and the $(m-s+1,m-s-1)$-entry
of $\bar{A}$ is zero, so $G_{m-r}$ is set to an identity matrix.
When the loop continues with $i=m-s+1$, the algorithm proceeds as
in \cite{MR1417720}.

\item [{Case~2:~$\lambda$~is~an~eigenvalue~of~$A$.}] Let $l$
be the index of the first column of $A$ that is a linear combination
of the earlier columns. The algorithm proceeds as in Case 1 until
the loop with $i=l$. At that point, the leading $(l-1)$-by-$(l-1)$
principal submatrix of the new tridiagonal matrix is fixed (ignoring
the addition of $\lambda I$ at the very end), and considering the
proof of Theorem \ref{thm:qrstepperfectshift}, the $l$th row of
$\bar{A}$ is all zeros. Therefore, $G_{l}$ is a rotation by $\frac{\pi}{2}$,
which has the effect of swapping rows $l$ and $l+1$ of $\bar{A}$.
Now, the $(l+1)$st row of $\bar{A}$ is all zeros, so by induction,
all remaining Givens rotations have angle $\frac{\pi}{2}$ and the
row of zeros is pushed to the bottom of $\bar{A}$. This constructs
$Q$ and $R$ as in Case 2 of Definition \ref{def:preferredqr}. $\qed$
\end{description}
\end{proof}

\subsection{\label{sub:svdstep}SVD step}

The SVD algorithm of Golub and Kahan starts with the idea of applying
the QR step to $B^{T}B-\sigma^{2}I$. Because the formation of $B^{T}B$
is problematic in floating-point, the QR step must be executed implicitly,
working directly with the entries of $B$. The following definition
of the SVD step is unconventional but equivalent to the usual definition.

\begin{defn}
Let $B$ be a real bidiagonal matrix (either upper bidiagonal or lower
bidiagonal) and $\sigma$ a nonnegative real number. Matrices $\bar{B}$,
$S$, and $T$ are obtained from an \emph{SVD step} if \[
BB^{T}-\sigma^{2}I=SL^{T}\quad\text{and}\quad B^{T}B-\sigma^{2}I=TR\]
are preferred QR factorizations and\[
\bar{B}=S^{T}BT.\]

\end{defn}
Note that\[
\bar{B}\bar{B}^{T}=L^{T}S+\sigma^{2}I\quad\text{and}\quad\bar{B}^{T}\bar{B}=RT+\sigma^{2}I,\]
i.e., an SVD step on $B$ implicitly computes QR steps for $BB^{T}$
and $B^{T}B$.

\begin{thm}
The SVD step exists and is uniquely defined.
\end{thm}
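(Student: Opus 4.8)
The plan is to deduce the statement directly from the existence and uniqueness of the preferred QR factorization established earlier in this section, so the first task is to verify that that theorem actually applies here. If $B$ is real bidiagonal (upper or lower), then $BB^{T}$ and $B^{T}B$ are real symmetric tridiagonal: the $(i,j)$ entry of $BB^{T}$ vanishes unless the index sets $\{i,i+1\}$ and $\{j,j+1\}$ overlap (and similarly, with $\{i-1,i\}$, $\{j-1,j\}$, for $B^{T}B$), i.e.\ unless $|i-j|\le 1$, and symmetry is immediate. Subtracting $\sigma^{2}I$ preserves the real symmetric tridiagonal structure, so both $BB^{T}-\sigma^{2}I$ and $B^{T}B-\sigma^{2}I$ satisfy the hypotheses of the preferred QR factorization theorem.

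For existence, I would apply that theorem to obtain the preferred QR factorizations $BB^{T}-\sigma^{2}I = S L^{T}$ and $B^{T}B-\sigma^{2}I = T R$, where $S$ and $T$ are the orthogonal (upper Hessenberg) factors and $L^{T}$, $R$ are the upper triangular factors (so $L$ is lower triangular), and then simply \emph{define} $\bar{B} := S^{T} B T$. By construction the triple $(\bar{B},S,T)$ meets every condition in the definition of the SVD step, and $\bar{B}$ is real because $B$, $S$, $T$ are. (Even if one permits a rectangular $B$, say $m$-by-$n$, then $S$ is $m$-by-$m$, $T$ is $n$-by-$n$, and all the products still make sense; nothing in the argument changes.)

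For uniqueness, suppose $(\bar{B},S,T)$ is any triple obtained from an SVD step on $B$ and $\sigma$. Then $BB^{T}-\sigma^{2}I = S L^{T}$ and $B^{T}B-\sigma^{2}I = T R$ are preferred QR factorizations, so the uniqueness half of the preferred QR theorem forces $S$ and $T$ to coincide with the orthogonal factors produced above; consequently $\bar{B} = S^{T} B T$ is determined. Hence the triple is unique.

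I do not expect a genuine obstacle: the substantive work has already been done in the theorem guaranteeing existence and uniqueness of preferred QR factorizations. The only points needing care are bookkeeping ones — correctly identifying $S$ and $T$ with the ``$Q$'' role and $L^{T}$, $R$ with the ``$R$'' role of a preferred factorization, and confirming the tridiagonal structure of $BB^{T}$ and $B^{T}B$ — together with the observation that Case~2 of Definition~\ref{def:preferredqr} covers noninvertible matrices, so the possibility that $\sigma^{2}$ is an eigenvalue of $BB^{T}$ or $B^{T}B$ (i.e.\ $\sigma$ is a singular value of $B$) causes no difficulty.
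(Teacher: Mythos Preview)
Your proposal is correct and follows the same approach as the paper, which simply notes that the result is immediate from the existence and uniqueness of the preferred QR factorization. You have merely made explicit the verification that $BB^{T}-\sigma^{2}I$ and $B^{T}B-\sigma^{2}I$ are real symmetric tridiagonal so that the preferred QR theorem applies, and then drawn the same conclusion.
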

\begin{proof}
This follows immediately from the existence and uniqueness of the
preferred QR factorization. $\qed$
\end{proof}
{}

\begin{thm}
\label{thm:bbarisbidiagonal}If $B$ is upper bidiagonal, then $\bar{B}$
is upper bidiagonal. If $B$ is lower bidiagonal, then $\bar{B}$
is lower bidiagonal.
\end{thm}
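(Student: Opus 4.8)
The plan is to exploit the identity $\bar B^{T}\bar B = RT + \sigma^{2} I$ established in the remark immediately preceding the theorem, together with the fact (Theorem on preferred QR factorizations applied to symmetric tridiagonal matrices) that $RT + \sigma^{2}I$ is the result of a QR step on $B^{T}B - \sigma^{2}I$ and hence is again symmetric tridiagonal. So $\bar B^{T}\bar B$ is symmetric tridiagonal; the same argument applied to $BB^{T}-\sigma^{2}I = SL^{T}$ shows $\bar B\bar B^{T} = L^{T}S + \sigma^{2}I$ is symmetric tridiagonal as well. The key point is that a rectangular-or-square real matrix $\bar B$ whose Gram matrices $\bar B^{T}\bar B$ \emph{and} $\bar B\bar B^{T}$ are both tridiagonal, and which additionally we know to be the image of a bidiagonal $B$ under an orthogonal equivalence (so $\bar B$ has the same singular values, in particular the same rank pattern), must itself be bidiagonal — upper bidiagonal if $B$ was upper bidiagonal, lower bidiagonal if $B$ was lower bidiagonal.

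I would carry this out in the following order. First, record that $S$ and $T$ are real orthogonal (being $Q$-factors of preferred QR factorizations of the real symmetric tridiagonal matrices $BB^{T}-\sigma^{2}I$ and $B^{T}B-\sigma^{2}I$), so $\bar B = S^{T}BT$ is real with the same singular values as $B$. Second, from $BB^{T}-\sigma^{2}I = SL^{T}$ and $B^{T}B-\sigma^{2}I = TR$ derive $\bar B\bar B^{T} = S^{T}(BB^{T})S = S^{T}(SL^{T}+\sigma^{2}I)S = L^{T}S + \sigma^{2}I$ and likewise $\bar B^{T}\bar B = RT + \sigma^{2}I$; invoke the fact that $RQ+\lambda I$ is symmetric tridiagonal whenever $A-\lambda I = QR$ is a QR factorization of symmetric tridiagonal $A$ (the observation made right after \eqref{eq:qrtridiagonal2}) to conclude both Gram matrices of $\bar B$ are symmetric tridiagonal. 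Third — the linear-algebra core — show that if a real $n$-by-$n$ matrix $M$ has $M^{T}M$ tridiagonal and $MM^{T}$ tridiagonal, then $M$ is, after possible transposition, bidiagonal. Fourth, pin down \emph{which} of upper/lower bidiagonal $\bar B$ is, by a direct look at the corner entries: the $(1,1)$ and $(1,2)$ pattern (or the structure of the first column) of $\bar B$ is controlled by the first Givens rotations in the bulge-chasing implicit step, which preserve the upper- versus lower-bidiagonal orientation — alternatively, argue that $B$ upper bidiagonal with positive bidiagonal forces $B^{T}B$ tridiagonal with a particular nonzero pattern and track it through $RT$.

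The main obstacle is the third step, the purely structural claim that simultaneous tridiagonality of $M^{T}M$ and $MM^{T}$ forces $M$ to be bidiagonal. The cleanest route I foresee: if $M^{T}M$ is tridiagonal then $\langle M e_i, M e_j\rangle = 0$ whenever $|i-j|\ge 2$, i.e. the columns of $M$ are "almost orthogonal," and similarly $MM^{T}$ tridiagonal makes the rows almost orthogonal; combined with a rank/nonsingularity bookkeeping argument (inherited from $B$ via the shared singular values, and handled exactly as the preferred-QR machinery handles reduced blocks) this squeezes the nonzero pattern of $M$ into two adjacent diagonals. I expect to handle the unreduced case first — there $B^{T}B$ has all super/subdiagonal entries nonzero and an inductive peeling argument on the columns of $\bar B$ works directly — and then reduce the general case to it by the block decomposition $A = A_{1}\oplus\cdots\oplus A_{r}$ already used in Definition~\ref{def:preferredqr}, since the SVD step respects that block structure. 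This is also precisely where the extension "to handle any number of zeros on the bidiagonal band," advertised in the section introduction, has to be verified, so I would expect the author's proof to either cite the block structure of the preferred QR factorization or to give the column-by-column induction explicitly.
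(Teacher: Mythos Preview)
Your third step---the ``purely structural claim'' that a real square matrix $M$ with both $M^{T}M$ and $MM^{T}$ tridiagonal must be bidiagonal---is false, and no amount of rank or singular-value bookkeeping repairs it. The $2\times 2$ permutation matrix $M=\left[\begin{smallmatrix}0&1\\1&0\end{smallmatrix}\right]$ already has $M^{T}M=MM^{T}=I$ (certainly tridiagonal) yet is neither upper nor lower bidiagonal; more generally any orthogonal $M$ gives $M^{T}M=MM^{T}=I$. Even if you insist on distinct singular values (the ``unreduced'' case), take any unreduced upper bidiagonal $B$, form its SVD $B=U_{0}\Sigma V_{0}^{T}$, and then pick \emph{any} orthogonal $U,V$ for which $U\Sigma^{2}U^{T}$ and $V\Sigma^{2}V^{T}$ are unreduced tridiagonal (Lanczos supplies a one-parameter family of each, indexed by the first column): the matrix $U\Sigma V^{T}$ has both Gram matrices tridiagonal and shares the singular values of $B$, but for generic choices it is full. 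Tridiagonality of the two Gram matrices constrains $U$ and $V$ separately; it imposes no coupling between them, and it is precisely such a coupling that bidiagonality of $U\Sigma V^{T}$ would require.

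The paper does not attempt anything structural. Its proof is constructive: Algorithm~\ref{alg:svdstep} realizes the SVD step by bulge chasing, and Lemma~\ref{lem:svdstepproducesbidiagonal} records the elementary observation that each Givens rotation in that algorithm leaves $\bar B$ upper bidiagonal except for a single bulge, which the next rotation removes. Lemmas~\ref{lem:svdstepequivalenttoqrsteponbtb} and~\ref{lem:svdstepequivalenttoqrsteponbbt} then identify the algorithm's orthogonal factors with the $S$ and $T$ of the abstract definition, so the algorithm's (upper bidiagonal) output \emph{is} $\bar B$. The lower-bidiagonal case follows by transposition. In short, the bidiagonal shape of $\bar B$ is not deduced from the tridiagonality of $\bar B^{T}\bar B$ and $\bar B\bar B^{T}$; it is inherited directly from the mechanics of bulge chasing.
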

The proof is presented after Lemma \ref{lem:svdstepequivalenttoqrsteponbbt}
below.

Before going any further with the SVD step, we need a utility routine.

\begin{alg}[$\mathbf{bulge\_start}$]\label{alg:bulgeinducingvector}Given
a $2$-by-$1$ vector $x$ and a shift $\sigma\geq0$, $\mathbf{bulge\_start}$
computes a vector colinear with $(x_{1}^{2}-\sigma^{2},x_{1}x_{2})^{T}$.
If $(x_{1}^{2}-\sigma^{2},x_{1}x_{2})^{T}$ is the zero vector, then
$\mathbf{bulge\_start}$ returns the zero vector.\end{alg}

The implementation of $\mathbf{bulge\_start}$ is omitted. LAPACK's
DBDSQR provides guidance \cite{323215}.

The following algorithm computes an SVD step for an upper bidiagonal
matrix. It can also handle lower bidiagonal matrices by taking transposes
as appropriate.

\begin{alg}[$\mathbf{svd\_step}$]\label{alg:svdstep}Given an $m$-by-$m$
upper bidiagonal matrix $B$ and a shift $\sigma\geq0$, the following
algorithm computes one SVD step. See Theorem \ref{thm:svdstepworks}.

\lstset{columns=flexible,escapechar=\%,mathescape,numberblanklines=false,numbers=left,numberstyle=\tiny}
\begin{lstlisting}
%$\bar{B}:=B$;%

%for $i:=1,\dots,m-1$%

    %$v := \begin{cases} \mathbf{bulge\_start}(\bar{B}(i,i:i+1)^T,\sigma) & \mbox{if $i=1$ or $\bar{B}(i-1,i:i+1)=(0,0)$} \\ \bar{B}(i-1,i:i+1)^T & \text{otherwise;} \end{cases}$%
%\label{line:computeT}%    %$T_i:=\mathbf{givens}(m,i,i+1,v)$;%
    %\label{line:rotateBonright}%%$\bar{B}:=\bar{B}T_i$;%

    %$u := \begin{cases} \mathbf{bulge\_start}(\bar{B}(i:i+1,i+1),\sigma) & \text{if $\bar{B}(i:i+1,i)=(0,0)^T$} \\ \bar{B}(i:i+1,i) & \text{otherwise;}\end{cases}$%
%\label{line:computeS}%    %$S_i:=\mathbf{givens}(m,i,i+1,u)$;%
    %\label{line:rotateBonleft}%%$\bar{B}:=S_i^T\bar{B}$;%

%end for%

%$S:=S_1\cdots S_{m-1};\;\;T:=T_1\cdots T_{m-1}$;%
\end{lstlisting}

\end{alg}

\begin{thm}
\label{thm:svdstepworks}Given a real upper bidiagonal matrix $B$
and a shift $\sigma\geq0$,
\begin{enumerate}
\item Algorithm \ref{alg:svdstep} computes an SVD step.
\item $\bar{B}$ is real upper bidiagonal.
\end{enumerate}
\end{thm}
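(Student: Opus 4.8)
The plan is to verify that Algorithm~\ref{alg:svdstep} (which is not displayed in this excerpt but is referenced) correctly implements the SVD step as defined just above, and then to deduce that $\bar B$ is upper bidiagonal from Theorem~\ref{thm:bbarisbidiagonal}. The key observation is that the SVD step on $B$ was \emph{defined} to be simultaneously a preferred QR step on $B^T B - \sigma^2 I$ and on $BB^T - \sigma^2 I$, so the natural strategy is to connect the bulge-chasing operations in Algorithm~\ref{alg:svdstep} to Algorithm~\ref{alg:implicitqrtridiagonal} ($\mathbf{qr\_step}$) applied to one of those tridiagonal matrices. Since $\mathbf{qr\_step}$ is already proved correct (Theorem~\ref{thm:implicitqrworks}) and produces the preferred QR factorization, it suffices to show that the Givens rotations generated inside $\mathbf{svd\_step}$ — the initial rotation built from $\mathbf{bulge\_start}$ and the subsequent bulge-chasing rotations — reproduce, on the $B$-side, exactly the transformations that $\mathbf{qr\_step}$ would perform on $B^T B - \sigma^2 I$ (for the right factor $T$) and on $BB^T - \sigma^2 I$ (for the left factor $S$).

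First I would record the classical algebraic identity underlying the implicit step: if $B$ is upper bidiagonal, the first column of $B^T B - \sigma^2 I$ is $(b_{11}^2 - \sigma^2,\ b_{11} b_{12},\ 0,\dots,0)^T$, which is precisely (up to colinearity) the vector returned by $\mathbf{bulge\_start}(B(1{:}2,1),\sigma)$. Thus the first rotation $T$-wise chosen by $\mathbf{svd\_step}$ agrees with the first Givens rotation $\mathbf{qr\_step}$ would choose for $B^T B - \sigma^2 I$, by Theorem~\ref{thm:productofgivens} and the uniqueness built into $\mathbf{givens}$ (including the convention that $\mathbf{givens}$ rotates through $\tfrac\pi2$ on the zero vector, which is what makes the noninvertible / zero-shift cases go through uniformly). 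Applying this rotation to $B$ on the right introduces a single bulge entry; the remaining rotations alternately act on the left and right to chase the bulge down the band. I would show by a straightforward induction on the chase index that after each left–right pair, the partially-transformed matrix $B^{(k)}$ satisfies $(B^{(k)})^T B^{(k)}$ equal to the partially-transformed tridiagonal that $\mathbf{qr\_step}$ has produced after the corresponding step, and similarly $B^{(k)} (B^{(k)})^T$ on the other side; the bulge entry of $B^{(k)}$ corresponds exactly to the single off-tridiagonal ``bulge'' entry in the QR chase, and the Givens rotation that annihilates one is the one that annihilates the other. This is the routine-but-careful bookkeeping part.

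Once the correspondence is established, part~(1) follows: the accumulated right rotations form $T$ with $B^T B - \sigma^2 I = TR$ a preferred QR factorization (by Theorem~\ref{thm:implicitqrworks} applied to $A = B^T B - \sigma^2 I$), the accumulated left rotations form $S$ with $BB^T - \sigma^2 I = SL^T$ a preferred QR factorization, and $\bar B = S^T B T$ by construction — so $\bar B$, $S$, $T$ are exactly the objects of an SVD step, and uniqueness (proved earlier) guarantees there is no ambiguity. Part~(2), that $\bar B$ is real upper bidiagonal, is then immediate from Theorem~\ref{thm:bbarisbidiagonal}; alternatively it can be read off directly from the chase, since the last rotation annihilates the final bulge entry and the zero pattern of an upper bidiagonal matrix is restored. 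The main obstacle I anticipate is handling the degenerate configurations — zeros on the bidiagonal band of $B$ and the case $\sigma = 0$ — uniformly: here one must check that $\mathbf{bulge\_start}$ returning the zero vector and $\mathbf{givens}$ then rotating by $\tfrac\pi2$ exactly mirrors Case~2 of the preferred QR factorization (Definition~\ref{def:preferredqr}), so that the correspondence with $\mathbf{qr\_step}$'s Case~2 analysis in Theorem~\ref{thm:implicitqrworks} remains valid throughout the chase. Verifying that the $\tfrac\pi2$-rotation convention propagates the row/column of zeros correctly on both the $B^TB$ and $BB^T$ sides simultaneously is the delicate point.
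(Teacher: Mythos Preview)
Your approach is essentially the paper's: the theorem is derived from Theorem~\ref{thm:implicitqrworks} together with three lemmas---one (Lemma~\ref{lem:svdstepproducesbidiagonal}) tracking the bulge to show $\bar B$ remains upper bidiagonal, and two (Lemmas~\ref{lem:svdstepequivalenttoqrsteponbtb} and~\ref{lem:svdstepequivalenttoqrsteponbbt}) proving inductively that the rotations $T_i$, $S_i$ produced by $\mathbf{svd\_step}$ coincide with those $\mathbf{qr\_step}$ would produce on $B^TB$ and $BB^T$, with exactly the case analysis for zero bulges that you anticipate. One caution: your primary route to part~(2) via Theorem~\ref{thm:bbarisbidiagonal} is circular, since in the paper that theorem's proof \emph{relies} on $\mathbf{svd\_step}$ outputting an upper bidiagonal matrix; your alternative---reading bidiagonality directly off the completed chase---is the non-circular argument, and is precisely what Lemma~\ref{lem:svdstepproducesbidiagonal} records.
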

The proof follows immediately from Theorem \ref{thm:implicitqrworks}
and the following three lemmas.

\begin{lem}
\label{lem:svdstepproducesbidiagonal}Upon completion of line \ref{line:rotateBonright},
$\bar{B}$ is upper bidiagonal, with the possible exception of a {}``bulge''
at $(i+1,i)$. Upon completion of line \ref{line:rotateBonleft},
$\bar{B}$ is upper bidiagonal, with the possible exception of a {}``bulge''
at $(i,i+2)$ if $i<m-1$. Upon completion of the entire algorithm,
$\bar{B}$ is upper bidiagonal.
\end{lem}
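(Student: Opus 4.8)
The plan is to track the zero/nonzero structure of the working matrix $\bar B$ through one pass of the bulge-chasing loop in Algorithm~\ref{alg:svdstep}, showing that each Givens rotation either preserves upper bidiagonal structure or moves a single off-band entry (the ``bulge'') one step further along, until it finally falls off the end. I would set up notation first: write $\bar B$ at the start of iteration $i$ as upper bidiagonal on rows/columns $i,\dots,m$ (with the leading part already finalized), then analyze the two rotations applied during that iteration. The right rotation acting on columns $i$ and $i+1$ combines two columns, one of which has a nonzero entry in row $i+1$ below the diagonal only via the subdiagonal entry $\bar B(i+1,i+1)$-adjacent structure; the result is that a spurious nonzero can appear only at position $(i+1,i)$. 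The subsequent left rotation acting on rows $i$ and $i+1$ then annihilates that $(i+1,i)$ entry but, by symmetry of how a rotation mixes two rows of an upper bidiagonal matrix, can create a new nonzero only at $(i,i+2)$ — provided $i<m-1$; if $i=m-1$ there is no column $i+2$, so no new bulge is created and the matrix is fully upper bidiagonal. The key algebraic fact is simply that a Givens rotation mixing two adjacent rows (resp. columns) of an upper bidiagonal matrix introduces at most one new off-band entry, located one position past the existing band in the ``downstream'' direction; everything else is bookkeeping about which entries were already zero.

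Concretely, the proof would proceed by a short induction on the loop index $i$. The inductive hypothesis is: at the start of iteration $i$, $\bar B$ is upper bidiagonal except possibly for a bulge at $(i,i+1)$-region — more precisely, after the previous iteration's left rotation, the only possible violation is at $(i-1,i+1)$, which the current iteration's right rotation on columns $i,i+1$ is designed (via $\mathbf{bulge\_start}$ for $i=1$, and via the chased bulge thereafter) to eliminate while pushing the disturbance to $(i+1,i)$. I would verify the base case $i=1$ separately: there the right rotation is chosen by $\mathbf{bulge\_start}$ from the vector colinear with $(B(1,1)^2-\sigma^2, B(1,1)B(1,2))^T$, which is exactly the first column of $B^TB-\sigma^2 I$ restricted to its first two entries, so applying it on the right produces a bulge precisely at $(2,1)$ and nowhere else. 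For the inductive step, I would check line by line that the right rotation (line~\ref{line:rotateBonright}) acts on two columns whose only nonzero entries lie in rows $\le i+1$, so the product can be nonzero only in rows $\le i+1$, giving a potential bulge at $(i+1,i)$; then the left rotation (line~\ref{line:rotateBonleft}) zeroes $(i+1,i)$ and, acting on rows $i$ and $i+1$ whose nonzero columns lie in $\{i,i+1,i+2\}$, can produce a nonzero only at $(i,i+2)$, which becomes the bulge handed to iteration $i+1$ (as position $(i,i+2)=((i+1)-1,(i+1)+1)$). Finally, when $i=m-1$ the would-be bulge position $(i,i+2)=(m-1,m+1)$ does not exist, so $\bar B$ emerges upper bidiagonal, establishing the last sentence of the lemma.

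The main obstacle I anticipate is not any deep computation but getting the indices and the exact placement of the bulge exactly right, especially in the presence of zeros on the bidiagonal band — which is the whole point of the ``extended'' SVD step emphasized in Section~\ref{sec:svdstep}. When $B(i+1,i+1)$ or an off-diagonal entry is zero, the relevant rotation angle may be forced to $\frac\pi2$ (via the $\mathbf{givens}(m,i_1,i_2,(0,0)^T)$ convention), which swaps two rows or columns; I must confirm that even in that degenerate case the structural claim — bulge only at $(i+1,i)$ after the right rotation, only at $(i,i+2)$ after the left rotation — still holds, since a swap of two columns each supported on rows $\le i+1$ is still supported on rows $\le i+1$, and similarly for rows. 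So the argument is robust, but the write-up needs to be careful to phrase the "only nonzero entries lie in rows/columns $\le$ such-and-such" claims in a way that is manifestly unaffected by whether particular band entries vanish. A secondary, more cosmetic issue is that Algorithm~\ref{alg:svdstep}'s listing is empty in this excerpt, so in the actual paper the line references \ref{line:rotateBonright} and \ref{line:rotateBonleft} must line up with the real code; I would make sure the proof refers to the rotations by their role ("the right rotation on columns $i,i+1$", "the left rotation on rows $i,i+1$") so it remains readable independently of the exact line numbering.
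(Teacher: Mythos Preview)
The paper does not actually prove this lemma: it writes ``The proof is omitted; the bulge-chasing nature of the algorithm is standard.'' Your induction on the loop index, tracking the row/column support of the two entries touched at each half-step, is precisely the standard argument being alluded to, and it is correct.

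One small sharpening is worth making when you write it up. Your degenerate-case justification (``a swap of two columns each supported on rows $\le i+1$ is still supported on rows $\le i+1$'') only rules out new nonzeros \emph{below} row $i+1$; it does not by itself show that the old bulge at $(i-1,i+1)$ is cleared. You need the complementary observation: in the nondegenerate branch the rotation is constructed from $\bar B(i-1,i{:}i{+}1)$ and hence annihilates $(i-1,i+1)$ by design, while in the degenerate branch (where the algorithm falls back to $\mathbf{bulge\_start}$, cf.\ Case~2 in the proof of Lemma~\ref{lem:svdstepequivalenttoqrsteponbtb}) both $(i-1,i)$ and $(i-1,i+1)$ are already zero, so any linear combination of columns $i$ and $i+1$ leaves row $i-1$ untouched. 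With that addition your argument is complete and in fact more careful about the zero-on-the-band case than most textbook presentations.
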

The proof is omitted; the bulge-chasing nature of the algorithm is
standard.

\begin{lem}
\label{lem:svdstepequivalenttoqrsteponbtb}Let $B$ be an $m$-by-$m$
real upper bidiagonal matrix and $\sigma$ a nonnegative real number.
Run Algorithm \ref{alg:svdstep} to produce $T_{1},\dots,T_{m-1}$,
and run Algorithm \ref{alg:implicitqrtridiagonal} with $A=B^{T}B$
and $\lambda=\sigma^{2}$ to produce $G_{1},\dots,G_{m-1}$. Then
$T_{i}=G_{i}$ for $i=1,\dots,m-1$.
\end{lem}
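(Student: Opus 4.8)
The plan is to show that Algorithm \ref{alg:svdstep}, viewed as acting on the (never-formed) matrix $B^{T}B$, is literally performing the Givens rotations of Algorithm \ref{alg:implicitqrtridiagonal} applied to $A=B^{T}B$ with $\lambda=\sigma^{2}$. The key observation is that both algorithms are deterministic bulge-chasing procedures in which each Givens rotation is uniquely pinned down by a single entry it is required to annihilate (together with the tie-breaking conventions built into $\mathbf{givens}$ and $\mathbf{bulge\_start}$), so it suffices to verify that the two algorithms annihilate the same entries in the same order and that, inductively, the relevant entries agree at each stage.

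The first step is the base case $i=1$. In Algorithm \ref{alg:implicitqrtridiagonal}, $G_{1}$ is the Givens rotation in columns $1,2$ determined by $\mathbf{bulge\_start}$ applied to the first column of $A-\lambda I=B^{T}B-\sigma^{2}I$; since $B$ is upper bidiagonal, the first column of $B^{T}B$ is $(B_{11}^{2},\,B_{11}B_{12},\,0,\dots,0)^{T}$ (using $B_{11}=B(1,1)$, $B_{12}=B(1,2)$ as the only nonzeros in the first two rows/columns relevant here), so the bulge-inducing vector is colinear with $(B_{11}^{2}-\sigma^{2},\,B_{11}B_{12})^{T}$. This is exactly the vector $\mathbf{bulge\_start}$ receives in Algorithm \ref{alg:svdstep}, because that algorithm feeds it $x=(B(1,1),B(1,2))^{T}$, and $\mathbf{bulge\_start}$ forms $(x_{1}^{2}-\sigma^{2},x_{1}x_{2})^{T}$. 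Hence $T_{1}=G_{1}$, including in the degenerate case where the vector is zero, since both algorithms then fall back to the rotation by $\frac{\pi}{2}$.

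The second step is the inductive chase. Assume $T_{j}=G_{j}$ for $j<i$. In Algorithm \ref{alg:implicitqrtridiagonal}, $G_{i}$ rotates columns $i,i+1$ and is determined by the requirement that it zero out the $(i+1,i-1)$ bulge entry (with the $\mathbf{givens}$ convention handling the zero case). The content of the argument is a local identity: after the first $i-1$ Givens rotations have been applied to $B$ on the appropriate sides in Algorithm \ref{alg:svdstep}, the partially updated $\bar{B}$ has a localized $2$-by-$2$ (or $2$-by-$1$) nonzero pattern whose contribution to the corresponding entry of $\bar{B}^{T}\bar{B}$ is exactly the entry that $G_{i}$ must annihilate in the tridiagonal picture. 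This is where one must carefully track that a bulge at position $(i,i)$/$(i+1,i)$ in $\bar{B}$ (per Lemma \ref{lem:svdstepproducesbidiagonal}) corresponds to the bulge at $(i+1,i-1)$ in $\bar{B}^{T}\bar{B}$, and that the vector $\mathbf{bulge\_start}$ is invoked on in Algorithm \ref{alg:svdstep} at iteration $i$ is colinear with the relevant $2$-vector extracted from $\bar{B}^{T}\bar{B}$. Given that colinearity and the shared tie-breaking conventions, uniqueness of the Givens rotation forces $T_{i}=G_{i}$.

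I expect the main obstacle to be the bookkeeping in the inductive step: precisely relating the entries of the partially-transformed bidiagonal $\bar{B}$ to the entries of the partially-transformed tridiagonal $\bar{B}^{T}\bar{B}$ at each intermediate stage, and confirming that the ``implicit'' bulge-inducing vector used by $\mathbf{svd\_step}$ really is colinear with the explicit column of $\bar{B}^{T}\bar{B}-\sigma^{2}I$ that $\mathbf{qr\_step}$ would use. This is essentially the classical implicit-$Q$ bookkeeping behind bulge-chasing SVD, extended to allow zeros on the band; the degenerate cases (zero bulge-inducing vectors, which trigger $\frac{\pi}{2}$ rotations) must be checked to behave identically on both sides, and this is exactly why the conventions in $\mathbf{givens}$ and $\mathbf{bulge\_start}$ were chosen as they were. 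The routine $2$-by-$2$ entry computations I would relegate to a remark rather than grind through in full.
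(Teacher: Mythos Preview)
Your overall architecture matches the paper's: induction on $i$, with the invariant $\bar{B}^{T}\bar{B}-\sigma^{2}I=\bar{A}$ (both sides partially transformed through step $i-1$), and the base case handled via the explicit form of the first column of $B^{T}B$. That part is fine.

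The gap is in your treatment of the degenerate cases, which you describe as ``zero bulge-inducing vectors, which trigger $\frac{\pi}{2}$ rotations'' and which you expect to ``behave identically on both sides.'' The paper's proof actually splits into \emph{three} cases, and the subtle one is the third: $\bar{A}(i{:}i{+}1,\,i{-}1)=0$ while $\bar{B}(i{-}1,\,i{:}i{+}1)\neq 0$. This occurs precisely when $\bar{B}(i{-}1,i{-}1)=0$, since the identity $\bar{A}(i{:}i{+}1,\,i{-}1)=\bar{B}(i{-}1,i{-}1)\,\bar{B}(i{-}1,\,i{:}i{+}1)^{T}$ shows the tridiagonal bulge can vanish without the bidiagonal bulge vanishing. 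In that situation the two algorithms are \emph{not} doing the same thing at the surface level: $\mathbf{qr\_step}$ sees no bulge and restarts using $\bar{A}(i{:}i{+}1,\,i)$, while $\mathbf{svd\_step}$ is still chasing the nonzero vector $\bar{B}(i{-}1,\,i{:}i{+}1)$. Neither side is invoking the $\frac{\pi}{2}$ fallback here. Showing that these two different-looking vectors are nonetheless colinear requires a genuine computation: the paper rolls back the previous left rotation $S_{i-1}$, writes $\bar{B}$ in terms of the pre-rotation entries $x,y,z$, and checks by hand that $\bar{B}(i{-}1,\,i{:}i{+}1)^{T}$ is a scalar multiple of $(x^{2}+y^{2}-\sigma^{2},\,yz)^{T}=\bar{A}(i{:}i{+}1,\,i)$. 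This is not a routine $2$-by-$2$ bookkeeping step you can wave off; it is the heart of why the extended SVD step (allowing zeros on the band) remains consistent with the extended QR step.

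A smaller point: your description places the bulge in $\bar{B}$ at $(i{+}1,i)$ when computing $T_{i}$, but at that moment (just after $S_{i-1}$ has been applied) the bulge sits at $(i{-}1,\,i{+}1)$; the $(i{+}1,i)$ bulge appears only \emph{after} $T_{i}$ is applied and is what $S_{i}$ chases. Getting this right is what makes the Case~1 colinearity identity above come out correctly.
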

The proof can be found in Appendix \ref{sec:proofs}.

{}

\begin{lem}
\label{lem:svdstepequivalenttoqrsteponbbt}Let $B$ be an $m$-by-$m$
real upper bidiagonal matrix and $\sigma$ a nonnegative real number.
Run Algorithm \ref{alg:svdstep} to produce $S_{1},\dots,S_{m-1}$,
and run Algorithm \ref{alg:implicitqrtridiagonal} with $A=BB^{T}$
and $\lambda=\sigma^{2}$ to produce $G_{1},\dots,G_{m-1}$. Then
$S_{i}=G_{i}$ for $i=1,\dots,m-1$.
\end{lem}
The proof can be found in Appendix \ref{sec:proofs}.

\medskip

\noindent\textit{Proof of Theorem \ref{thm:bbarisbidiagonal}} If
$B$ is upper bidiagonal, then $\bar{B}$ can be obtained from Algorithm
\ref{alg:svdstep}, which produces upper bidiagonal matrices. If $B$
is lower bidiagonal, then apply the same argument to $B^{T}$. $\qed$

\section{\label{sec:qrstep}Phase II: Algorithm $\mathbf{csd\_step}$}

Now we can return to the CSD algorithm. Phase I, which was already
seen, transforms the original partitioned unitary matrix to bidiagonal
block form. Phase II, which is developed now, iteratively applies
the SVD step to each of the four blocks of this matrix. Algorithm
$\mathbf{csd\_step}$ executes a single step in the iteration.

\subsection{Existence of the CSD step}

\begin{defn}
Let $\left[\begin{array}{cc}
B_{11} & B_{12}^{}\\
B_{21} & B_{22}\end{array}\right]$ be a real orthogonal matrix in bidiagonal block form and $\mu$ and
$\nu$ nonnegative shifts satisfying $\mu^{2}+\nu^{2}=1$. The matrix
equation\[
\left[\begin{array}{cc}
\bar{B}_{11} & \bar{B}_{12}\\
\bar{B}_{21} & \bar{B}_{22}\end{array}\right]=\left[\begin{array}{cc}
D_{1}\\
 & D_{2}\end{array}\right]\left[\begin{array}{cc}
S_{1}\\
 & S_{2}\end{array}\right]^{T}\left[\begin{array}{cc}
B_{11} & B_{12}\\
B_{21} & B_{22}\end{array}\right]\left[\begin{array}{cc}
T_{1}\\
 & T_{2}\end{array}\right]\left[\begin{array}{cc}
E_{1}\\
 & E_{2}\end{array}\right]\]
effects a \emph{CSD step} if
\begin{enumerate}
\item $B_{11}\mapsto S_{1}^{T}B_{11}T_{1}$ and $B_{22}\mapsto S_{2}^{T}B_{22}T_{2}$
are SVD steps with shift $\mu$,
\item $B_{12}\mapsto S_{1}^{T}B_{12}T_{2}$ and $B_{21}\mapsto S_{2}^{T}B_{21}T_{1}$
are SVD steps with shift $\nu$,
\item $D_{1}$, $D_{2}$, $E_{1}$, and $E_{2}$ are diagonal signature
matrices, and
\item $\left[\begin{array}{cc}
\bar{B}_{11} & \bar{B}_{12}\\
\bar{B}_{21} & \bar{B}_{22}\end{array}\right]$ is a matrix in bidiagonal block form.
\end{enumerate}
\end{defn}
The existence of the CSD step is not obvious at first glance. It depends
on several SVD steps being related in very specific ways, e.g., $B_{11}$
and $B_{21}$ having the same right orthogonal factor $T_{1}$. The
following theorem establishes that yes, indeed, the CSD step exists,
and its proof makes clear the necessity of the restriction $\mu^{2}+\nu^{2}=1$.

\begin{thm}
\label{thm:existenceanduniquenessofcsdstep}The CSD step exists and
the result\[
\left[\begin{array}{cc}
\bar{B}_{11} & \bar{B}_{12}\\
\bar{B}_{21} & \bar{B}_{22}\end{array}\right]\]
is uniquely defined.
\end{thm}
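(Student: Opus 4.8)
The plan is to prove existence by showing that the four SVD steps demanded by the definition can be arranged so that they share their orthogonal factors, then to fix signs with machinery already in hand; uniqueness will follow from uniqueness of the preferred QR factorization. \textbf{The key step.} Write $M=\left[\begin{smallmatrix}B_{11}&B_{12}\\B_{21}&B_{22}\end{smallmatrix}\right]$. Because $M$ is real orthogonal, $MM^{T}=M^{T}M=I$ gives the four block relations $B_{11}^{T}B_{11}+B_{21}^{T}B_{21}=I$, $B_{12}^{T}B_{12}+B_{22}^{T}B_{22}=I$, $B_{11}B_{11}^{T}+B_{12}B_{12}^{T}=I$, and $B_{21}B_{21}^{T}+B_{22}B_{22}^{T}=I$. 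Combining the first with $\mu^{2}+\nu^{2}=1$ yields $B_{21}^{T}B_{21}-\nu^{2}I=-(B_{11}^{T}B_{11}-\mu^{2}I)$, and likewise for the other three: each ``$\sigma=\nu$'' shifted Gram matrix of a block is the negative of a ``$\sigma=\mu$'' shifted Gram matrix of a diagonally adjacent block. This is the one place $\mu^{2}+\nu^{2}=1$ is used. I would then record the elementary fact that negating a real symmetric tridiagonal matrix leaves the orthogonal factor of its preferred QR factorization unchanged: negation preserves the zero pattern, hence the decomposition into unreduced and $1$-by-$1$ blocks and the invertibility of each block, hence the indices $k$, $l$, the permutation $P$, and each $Q_{i}$ of Definition \ref{def:preferredqr}; only $R$ flips sign. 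Applying this to the four relations shows that the right factor $T_{1}$ of the SVD step on $B_{11}$ with shift $\mu$ equals the right factor of the SVD step on $B_{21}$ with shift $\nu$, that the left factor $S_{1}$ is shared by the steps on $B_{11}$ and $B_{12}$, that the right factor $T_{2}$ is shared by the steps on $B_{12}$ and $B_{22}$, and that the left factor $S_{2}$ is shared by the steps on $B_{21}$ and $B_{22}$. (Each of the four SVD steps exists and is unique by the results of Section \ref{sec:svdstep}, including when there are zeros on the bidiagonal bands.)

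\textbf{Assembly.} Set $\tilde{B}_{ij}=S_{i}^{T}B_{ij}T_{j}$, the raw SVD-step outputs. Then $\left[\begin{smallmatrix}S_{1}&\\&S_{2}\end{smallmatrix}\right]^{T}M\left[\begin{smallmatrix}T_{1}&\\&T_{2}\end{smallmatrix}\right]=\left[\begin{smallmatrix}\tilde{B}_{11}&\tilde{B}_{12}\\\tilde{B}_{21}&\tilde{B}_{22}\end{smallmatrix}\right]$ is real orthogonal, and by Theorem \ref{thm:bbarisbidiagonal} the blocks $\tilde{B}_{11},\tilde{B}_{21}$ are upper bidiagonal and $\tilde{B}_{12},\tilde{B}_{22}$ are lower bidiagonal. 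Thus $\left[\begin{smallmatrix}\tilde{B}_{11}&\tilde{B}_{12}\\\tilde{B}_{21}&\tilde{B}_{22}\end{smallmatrix}\right]$ is a real orthogonal matrix with the bidiagonal structure of (\ref{eq:jacobiform}), though possibly not its sign pattern, so Theorem \ref{thm:signsarenothingtofear} supplies diagonal signature matrices $D_{1},D_{2},E_{1},E_{2}$ making $\left[\begin{smallmatrix}D_{1}&\\&D_{2}\end{smallmatrix}\right]\left[\begin{smallmatrix}\tilde{B}_{11}&\tilde{B}_{12}\\\tilde{B}_{21}&\tilde{B}_{22}\end{smallmatrix}\right]\left[\begin{smallmatrix}E_{1}&\\&E_{2}\end{smallmatrix}\right]$ a matrix in bidiagonal block form. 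Conditions (1)--(4) of the definition of a CSD step are then all in place.

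\textbf{Uniqueness.} The preferred QR factorization is unique, so $S_{1},S_{2},T_{1},T_{2}$, and hence the $\tilde{B}_{ij}$, are uniquely determined; the only remaining freedom is the choice of $D_{1},D_{2},E_{1},E_{2}$. But every entry of a matrix in bidiagonal block form is either zero or has a sign fixed by its position (positive on the main diagonals of $B_{11}$, $B_{12}$, $B_{22}$, negative on the superdiagonals of $B_{11}$ and $B_{21}$, and so on, directly from (\ref{eq:jacobiform})). Any admissible $\left[\begin{smallmatrix}D_{1}&\\&D_{2}\end{smallmatrix}\right]\left[\begin{smallmatrix}\tilde{B}_{11}&\tilde{B}_{12}\\\tilde{B}_{21}&\tilde{B}_{22}\end{smallmatrix}\right]\left[\begin{smallmatrix}E_{1}&\\&E_{2}\end{smallmatrix}\right]$ has the same zero pattern and the same entrywise absolute values as $\left[\begin{smallmatrix}\tilde{B}_{11}&\tilde{B}_{12}\\\tilde{B}_{21}&\tilde{B}_{22}\end{smallmatrix}\right]$, so its nonzero entries are determined in magnitude by the $\tilde{B}_{ij}$ and in sign by their positions; hence every valid choice of signature matrices yields the same $\left[\begin{smallmatrix}\bar{B}_{11}&\bar{B}_{12}\\\bar{B}_{21}&\bar{B}_{22}\end{smallmatrix}\right]$.

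\textbf{Main obstacle.} The real content is the key step: recognizing that orthogonality together with $\mu^{2}+\nu^{2}=1$ forces the shifted Gram matrices of the four blocks to occur in negative pairs, and pairing this with the sign-insensitivity of the preferred-QR orthogonal factor. Once that is established, the sharing of factors demanded by the definition is automatic, and the rest is a routine consequence of Theorems \ref{thm:signsarenothingtofear} and \ref{thm:bbarisbidiagonal} and the uniqueness of the preferred QR factorization.
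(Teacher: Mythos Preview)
Your proof is correct and follows essentially the same route as the paper's: derive the four identities (\ref{eq:blockidentities1})--(\ref{eq:blockidentities4}) from orthogonality and $\mu^{2}+\nu^{2}=1$, use uniqueness of the preferred QR factorization to conclude that the eight orthogonal factors collapse to four, invoke Theorem~\ref{thm:signsarenothingtofear} for the sign correction, and argue uniqueness from the fact that the $\tilde{B}_{ij}$ fix the absolute values while bidiagonal block form fixes the signs. Your explicit observation that negating a symmetric tridiagonal matrix leaves the orthogonal factor of its preferred QR factorization unchanged, and your explicit appeal to Theorem~\ref{thm:bbarisbidiagonal} for the bidiagonal structure of the $\tilde{B}_{ij}$, are details the paper leaves implicit but which are welcome additions.
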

\begin{proof}
Begin with the identities\begin{align}
B_{11}^{T}B_{11}-\mu^{2}I & =-(B_{21}^{T}B_{21}-\nu^{2}I),\label{eq:blockidentities1}\\
B_{22}^{T}B_{22}-\mu^{2}I & =-(B_{12}^{T}B_{12}-\nu^{2}I),\\
B_{11}B_{11}^{T}-\mu^{2}I & =-(B_{12}B_{12}^{T}-\nu^{2}I),\\
B_{22}B_{22}^{T}-\mu^{2}I & =-(B_{21}B_{21}^{T}-\nu^{2}I).\label{eq:blockidentities4}\end{align}
Each is proved using orthogonality and the relation $\mu^{2}+\nu^{2}=1$.
For example, the orthogonality of $\left[\begin{smallmatrix} B_{11} & B_{12} \\ B_{21} & B_{22} \end{smallmatrix}\right]$
implies $B_{11}^{T}B_{11}+B_{21}^{T}B_{21}=I$, and splitting the
right-hand-side $I$ into $\mu^{2}I+\nu^{2}I$ and rearranging gives
(\ref{eq:blockidentities1}).

Define $\bar{B}_{ij}=S_{ij}B_{ij}T_{ij}$ by an SVD step with the
appropriate shift ($\mu$ if $i=j$ or $\nu$ if $i\neq j$). This
produces a total of eight orthogonal factors $S_{ij}$, $T_{ij}$,
but only four are required for the CSD step. In fact, by uniqueness
of the preferred QR factorization, $T_{11}=T_{21}$, $T_{12}=T_{22}$,
$S_{11}=S_{12}$, and $S_{21}=S_{22}$. (For example, $T_{11}$ is
the orthogonal factor in the preferred QR factorization $B_{11}^{T}B_{11}-\mu^{2}I=T_{11}R_{11}$,
and $T_{21}$ is the orthogonal factor in the preferred QR factorization
$B_{21}^{T}B_{21}-\nu^{2}I=T_{21}R_{21}$. Considering (\ref{eq:blockidentities1})
and the uniqueness of the preferred QR factorization, we must have
$T_{21}=T_{11}$ and $R_{21}=-R_{11}$.) Hence, it is legal to define
$T_{1}=T_{11}=T_{21}$, $T_{2}=T_{12}=T_{22}$, $S_{1}=S_{11}=S_{12}$,
and $S_{2}=S_{21}=S_{22}$.

Finally, $D_{1}$, $D_{2}$, $E_{1}$, and $E_{2}$ are designed to
fix the sign pattern required for a matrix in bidiagonal block form.
Their existence is guaranteed by Theorem \ref{thm:signsarenothingtofear}. 

Regarding uniqueness, $S_{1}$, $S_{2}$, $T_{1}$, and $T_{2}$ are
uniquely defined by the preferred QR factorization, so $S_{i}^{T}B_{ij}T_{j}$,
$i,j=1,2$, are uniquely defined. This uniquely defines the absolute
values of the entries of $\bar{B}_{ij}$, $i,j=1,2$, and the signs
are specified by the definition of bidiagonal block form. $\qed$
\end{proof}
The obvious way to compute a CSD step is to compute four SVD steps
and then to combine them together as in the proof. Of course, when
working in floating-point, the identities such as $T_{11}=T_{21}$
typically will not hold exactly. In fact, when singular values are
clustered, the computed $T_{11}$ and $T_{21}$ may not even bear
a close resemblance.

Our approach is to interleave the computation of the four SVD steps,
taking care to compute $S_{1}$, $S_{2}$, $T_{1}$, and $T_{2}$
once and only once. We find that when one block of a matrix provides
unreliable information (specifically, a very short vector whose direction
is required for a Givens rotation), another block may come to the
rescue, providing more reliable information. Hence, the redundancy
in the partitioned orthogonal matrix, rather than being a stumbling
block, is actually an aid to stability.

\subsection{Algorithm specification}

The following is a specification for Algorithm $\mathbf{csd\_step}$,
which accomplishes one step in Phase II of the CSD algorithm.

\begin{specification}\label{spec:csd_step}The input to $\mathbf{csd\_step}$
should consist of 

\begin{enumerate}
\item $\theta^{(n)}\in[0,\frac{\pi}{2}]^{q}$ and $\phi^{(n)}\in[0,\frac{\pi}{2}]^{q-1}$,
implicitly defining a matrix in bidiagonal block form $\left[\begin{smallmatrix} B^{(n)}_{11} & B^{(n)}_{12} \\ B^{(n)}_{21} & B^{(n)}_{22} \end{smallmatrix}\right]$,
\item integers $1\leq\underline{i}<\overline{i}\leq q$ identifying the
current block in a partially deflated matrix---$\left[\begin{smallmatrix}B^{(n)}_{11}&B^{(n)}_{12}\\B^{(n)}_{21}&B^{(n)}_{22}\end{smallmatrix}\right]$
must have the form\begin{equation}
\left[\begin{array}{clc|crc}
\mbox{\Large{$\ast$}} & 0 &  & \mbox{\Large{$\ast$}}\\
 & B_{11}^{(n)}(\underline{i}:\overline{i},\underline{i}:\overline{i}) & 0 & 0 & B_{12}^{(n)}(\underline{i}:\overline{i},\underline{i}:\overline{i})\\
 &  & \mbox{\Large{$\ast$}} &  & 0 & \mbox{\Large{$\ast$}}\\
\hline \mbox{\Large{$\ast$}} & 0 &  & \mbox{\Large{$\ast$}}\\
 & B_{21}^{(n)}(\underline{i}:\overline{i},\underline{i}:\overline{i}) & 0 & 0 & B_{22}^{(n)}(\underline{i}:\overline{i},\underline{i}:\overline{i})\\
 &  & \mbox{\Large{$\ast$}} &  & 0 & \mbox{\Large{$\ast$}}\end{array}\right],\label{eq:splitting}\end{equation}

\item shifts $0\leq\mu,\nu\leq1$ satisfying $\mu^{2}+\nu^{2}=1$.
\end{enumerate}
The algorithm should compute one CSD step with shifts $\mu$ and $\nu$
to effect \begin{equation}
\left[\begin{array}{l|r}
B_{11}^{(n)}(\underline{i}:\overline{i},\underline{i}:\overline{i}) & B_{12}^{(n)}(\underline{i}:\overline{i},\underline{i}:\overline{i})\\
\hline B_{21}^{(n)}(\underline{i}:\overline{i},\underline{i}:\overline{i}) & B_{22}^{(n)}(\underline{i}:\overline{i},\underline{i}:\overline{i})\end{array}\right]\mapsto\left[\begin{array}{l|r}
\bar{B}_{11} & \bar{B}_{12}\\
\hline \bar{B}_{21} & \bar{B}_{22}\end{array}\right].\label{eq:aftercsdstep}\end{equation}
The output should consist of $\theta^{(n+1)}\in[0,\frac{\pi}{2}]^{q}$
and $\phi^{(n+1)}\in[0,\frac{\pi}{2}]^{q-1}$, implicitly defining
the matrix $\left[\begin{smallmatrix} B^{(n+1)}_{11} & B^{(n+1)}_{12} \\ B^{(n+1)}_{21} & B^{(n+1)}_{22} \end{smallmatrix}\right]$
that results from replacing the appropriate submatrices from (\ref{eq:splitting})
with the corresponding submatrices of (\ref{eq:aftercsdstep}), and
orthogonal matrices $U_{1}^{(n+1)},U_{2}^{(n+1)},V_{1}^{(n+1)},V_{2}^{(n+1)}$
such that\begin{multline}
\left[\begin{array}{cc}
B_{11}^{(n+1)} & B_{12}^{(n+1)}\\
B_{21}^{(n+1)} & B_{22}^{(n+1)}\end{array}\right]=\left[\begin{array}{cc}
U_{1}^{(n+1)}\\
 & U_{2}^{(n+1)}\end{array}\right]^{*}\left[\begin{array}{cc}
B_{11}^{(n)} & B_{12}^{(n)}\\
B_{21}^{(n)} & B_{22}^{(n)}\end{array}\right]\left[\begin{array}{cc}
V_{1}^{(n+1)}\\
 & V_{2}^{(n+1)}\end{array}\right].\end{multline}
 \end{specification}

\subsection{The algorithm}

The naive idea for implementing the above specification is to compute
four SVD steps separately. As mentioned in the proof of Theorem \ref{thm:existenceanduniquenessofcsdstep},
this produces eight orthogonal factors when only four are needed.
In fact, in light of (\ref{eq:blockidentities1})--(\ref{eq:blockidentities4})
and Theorem \ref{thm:productofgivens}, the Givens rotations defining
$S_{1},S_{2},T_{1}$, and $T_{2}$ come in identical pairs. Of course,
in floating-point, the paired Givens rotations would not actually
be identical. Hence, the naive algorithm suffers from the standpoints
of efficiency (each Givens rotation is needlessly computed twice)
and stability (what happens when two computed Givens rotations disagree?).

Our solution, suggested by Fig.~\ref{fig:implicitqrpattern}, is
to execute the four SVD steps simultaneously, computing each Givens
rotation once and only once through a bulge-chasing procedure that
treats all four blocks with equal regard.

The algorithm makes use of a routine called $\mathbf{merge}$. In
the absence of roundoff error, $\mathbf{merge}$ is essentially a
no-op; given two vectors in any one-dimensional subspace, it returns
a vector in the same subspace. (And $\mathbf{merge}((0,0)^{T},(0,0)^{T})=(0,0)^{T}$.)
In the presence of roundoff error, $\mathbf{merge}$ is used to ameliorate
small differences resulting from previous roundoff errors.

\begin{alg}[$\mathbf{csd\_step}$]\label{alg:csdstep}This algorithm performs one CSD step on a matrix in bidiagonal block form and satisfies Specification \ref{spec:csd_step}. Its correctness is established in Theorem \ref{thm:csdstepsatisfiesspec}.

\medskip{}

\lstset{columns=flexible,escapechar=\%,mathescape,numberblanklines=false,numbers=left,numberstyle=\tiny}
\begin{lstlisting}
%\label{line:qrstepbdbfromangles}%%Explicitly construct $\left[\begin{smallmatrix}B_{11}&B_{12}\\B_{21}&B_{22}\end{smallmatrix}\right]$ from $(\theta^{(n)},\phi^{(n)})$;%

%\label{line:qrstepgivensbegin}%%Initialize $\left[\begin{smallmatrix}\bar{B}_{11}&\bar{B}_{12}\\\bar{B}_{21}&\bar{B}_{22}\end{smallmatrix}\right]:=\left[\begin{smallmatrix}B_{11}&B_{12}\\B_{21}&B_{22}\end{smallmatrix}\right]$;%

%\label{line:definev11a}%%$v_{11} := \mathbf{bulge\_start}(\bar{B}_{11}(\underline{i},\underline{i}:\underline{i}+1)^T,\mu)$;%
%\label{line:definev21a}%%$v_{21} := \mathbf{bulge\_start}(\bar{B}_{21}(\underline{i},\underline{i}:\underline{i}+1)^T,\nu)$;%
%\label{line:merge1}%%$v_1 := \mathbf{merge}(v_{11},v_{21})$;%
%$T_{1,\underline{i}} := \mathbf{givens}\left(q, \underline{i}, \underline{i}+1, v_1\right)$;%
%$\left[\begin{smallmatrix}\bar{B}_{11}&\bar{B}_{12}\\\bar{B}_{21}&\bar{B}_{22}\end{smallmatrix}\right]:=\left[\begin{smallmatrix}\bar{B}_{11}&\bar{B}_{12}\\\bar{B}_{21}&\bar{B}_{22}\end{smallmatrix}\right]\left[\begin{smallmatrix}T_{1,\underline{i}}&\\&I\end{smallmatrix}\right]$;%
%\label{line:defineu11a}%%$u_{11} := \begin{cases} \bar{B}_{11}(\underline{i}:\underline{i}+1,\underline{i}) & \text{if nonzero} \\ \mathbf{bulge\_start}(\bar{B}_{11}(\underline{i}:\underline{i}+1,\underline{i}+1),\mu) & \text{otherwise}; \end{cases}$%
%\label{line:defineu12a}%%$u_{12} := \mathbf{bulge\_start}(\bar{B}_{12}(\underline{i}:\underline{i}+1,\underline{i}),\nu)$;%
%\label{line:defineu21a}%%$u_{21} := \begin{cases} \bar{B}_{21}(\underline{i}:\underline{i}+1,\underline{i}) & \text{if nonzero} \\ \mathbf{bulge\_start}(\bar{B}_{21}(\underline{i}:\underline{i}+1,\underline{i}+1),\nu) & \text{otherwise}; \end{cases}$%
%\label{line:defineu22a}%%$u_{22} := \mathbf{bulge\_start}(\bar{B}_{22}(\underline{i}:\underline{i}+1,\underline{i}),\mu)$;%
%\label{line:merge2}%%$u_1 := \mathbf{merge}(u_{11},u_{12})$; $u_2 := \mathbf{merge}(u_{21},u_{22})$;%
%$S_{1,\underline{i}}:=\mathbf{givens}(q,\underline{i},\underline{i}+1,u_1)$; $S_{2,\underline{i}}:=\mathbf{givens}(q,\underline{i},\underline{i}+1,u_2)$;%
%$\left[\begin{smallmatrix}\bar{B}_{11}&\bar{B}_{12}\\\bar{B}_{21}&\bar{B}_{22}\end{smallmatrix}\right] := \left[\begin{smallmatrix}S_{1,\underline{i}}&\\&S_{2,\underline{i}}\end{smallmatrix}\right]^T \left[\begin{smallmatrix}\bar{B}_{11}&\bar{B}_{12}\\\bar{B}_{21}&\bar{B}_{22}\end{smallmatrix}\right]$;%

%for $i:=\underline{i}+1,\dots,\overline{i}-1$%

%\label{line:definev11b}%    %$v_{11} := \begin{cases} \bar{B}_{11}(i-1,i:i+1) & \text{if nonzero} \\ \mathbf{bulge\_start}(\bar{B}_{11}(i,i:i+1),\mu) & \text{otherwise;} \end{cases}$%
%\label{line:definev21b}%    %$v_{21} := \begin{cases} \bar{B}_{21}(i-1,i:i+1) & \text{if nonzero} \\ \mathbf{bulge\_start}(\bar{B}_{21}(i,i:i+1),\nu) & \text{otherwise;} \end{cases}$%
%\label{line:definev12b}%    %$v_{12} := \begin{cases} \bar{B}_{12}(i-1,i-1:i) & \text{if nonzero} \\ \mathbf{bulge\_start}(\bar{B}_{12}(i,i-1:i),\nu) & \text{otherwise;} \end{cases}$%
%\label{line:definev22b}%    %$v_{22} := \begin{cases} \bar{B}_{22}(i-1,i-1:i) & \text{if nonzero} \\ \mathbf{bulge\_start}(\bar{B}_{22}(i,i-1:i),\mu) & \text{otherwise;} \end{cases}$%
%\label{line:merge3}%    %$v_1 := \mathbf{merge}(v_{11},v_{21})$; $v_2 := \mathbf{merge}(v_{12},v_{22})$;%
    %$T_{1,i} := \mathbf{givens}(q,i,i+1,v_1)$; $T_{2,i-1}:=\mathbf{givens}(q,i-1,i,v_2)$;%
    %$\left[\begin{smallmatrix}\bar{B}_{11}&\bar{B}_{12}\\\bar{B}_{21}&\bar{B}_{22}\end{smallmatrix}\right] :=  \left[\begin{smallmatrix}\bar{B}_{11}&\bar{B}_{12}\\\bar{B}_{21}&\bar{B}_{22}\end{smallmatrix}\right] \left[\begin{smallmatrix}T_{1,i}&\\&T_{2,i-1}\end{smallmatrix}\right]$;%

%\label{line:defineu11b}%    %$u_{11} := \begin{cases} \bar{B}_{11}(i:i+1,i) & \text{if nonzero} \\ \mathbf{bulge\_start}(\bar{B}_{11}(i:i+1,i+1),\mu) & \text{otherwise;} \end{cases}$%
%\label{line:defineu12b}%    %$u_{12} := \begin{cases} \bar{B}_{12}(i:i+1,i-1) & \text{if nonzero} \\ \mathbf{bulge\_start}(\bar{B}_{12}(i:i+1,i),\nu) & \text{otherwise;} \end{cases}$%
%\label{line:defineu21b}%    %$u_{21} := \begin{cases} \bar{B}_{21}(i:i+1,i) & \text{if nonzero} \\ \mathbf{bulge\_start}(\bar{B}_{21}(i:i+1,i+1),\nu) & \text{otherwise;} \end{cases}$%
%\label{line:defineu22b}%    %$u_{22} := \begin{cases} \bar{B}_{22}(i:i+1,i-1) & \text{if nonzero} \\ \mathbf{bulge\_start}(\bar{B}_{22}(i:i+1,i),\mu) & \text{otherwise;} \end{cases}$%
%\label{line:merge4}%    %$u_1 := \mathbf{merge}(u_{11},u_{12})$; $u_2 := \mathbf{merge}(u_{21},u_{22})$;%
    %$S_{1,i}:=\mathbf{givens}(q,i,i+1,u_1)$; $S_{2,i}:=\mathbf{givens}(q,i,i+1,u_2)$;%
    %$\left[\begin{smallmatrix}\bar{B}_{11}&\bar{B}_{12}\\\bar{B}_{21}&\bar{B}_{22}\end{smallmatrix}\right] := \left[\begin{smallmatrix}S_{1,i}&\\&S_{2,i}\end{smallmatrix}\right]^T \left[\begin{smallmatrix}\bar{B}_{11}&\bar{B}_{12}\\\bar{B}_{21}&\bar{B}_{22}\end{smallmatrix}\right]$;%

%\label{line:qrstepgivensend}%%end for%

%\label{line:definev12c}%%$v_{12} := \begin{cases} \bar{B}_{12}(\overline{i}-1,\overline{i}-1:\overline{i}) & \text{if nonzero} \\ \mathbf{bulge\_start}(\bar{B}_{12}(\overline{i},\overline{i}-1:\overline{i}),\nu) & \text{otherwise;} \end{cases}$%
%\label{line:definev22c}%%$v_{22} := \begin{cases} \bar{B}_{22}(\overline{i}-1,\overline{i}-1:\overline{i}) & \text{if nonzero} \\ \mathbf{bulge\_start}(\bar{B}_{22}(\overline{i},\overline{i}-1:\overline{i}),\mu) & \text{otherwise;} \end{cases}$%
%\label{line:merge5}%%$v_2 := \mathbf{merge}(v_{12},v_{22})$;%
%$T_{2,\overline{i}-1}:=\mathbf{givens}(q,\overline{i}-1,\overline{i},v_2)$;%
%$\left[\begin{smallmatrix}\bar{B}_{11}&\bar{B}_{12}\\\bar{B}_{21}&\bar{B}_{22}\end{smallmatrix}\right] :=  \left[\begin{smallmatrix}\bar{B}_{11}&\bar{B}_{12}\\\bar{B}_{21}&\bar{B}_{22}\end{smallmatrix}\right] \left[\begin{smallmatrix}I&\\&T_{2,\overline{i}-1}\end{smallmatrix}\right]$;%

%\label{line:qrstepu}%%$U_1^{(n+1)}:=S_{1,\underline{i}}\cdots S_{1,\overline{i}-1};\;\;U_2^{(n+1)}:=S_{2,\underline{i}}\cdots S_{2,\overline{i}-1}$;%
%$V_1^{(n+1)}:=T_{1,\underline{i}}\cdots T_{1,\overline{i}-1};\;\;V_2^{(n+1)}:=T_{2,\underline{i}}\cdots T_{2,\overline{i}-1}$;%

%\label{line:signs}%%Fix signs in $\left[\begin{smallmatrix}\bar{B}_{11}&\bar{B}_{12}\\\bar{B}_{21}&\bar{B}_{22}\end{smallmatrix}\right]$ to match (\ref{eq:jacobiform}), negating columns of $U_1^{(n+1)}$, $U_2^{(n+1)}$, $V_1^{(n+1)}$, and $V_2^{(n+1)}$ as necessary;%

%\label{line:angles}%%Compute $(\theta^{(n+1)},\phi^{(n+1)})$ to implicitly represent $\left[\begin{smallmatrix}\bar{B}_{11}&\bar{B}_{12}\\\bar{B}_{21}&\bar{B}_{22}\end{smallmatrix}\right]$;%
\end{lstlisting}
\end{alg}

Note that it is possible and preferable to enforce signs and compute
$\theta^{(n+1)}$ and $\phi^{(n+1)}$ as Givens rotations are applied,
instead of waiting until the end of the algorithm. 

\begin{thm}
\label{thm:csdstepsatisfiesspec}If arithmetic operations are performed
exactly, then Algorithm $\mathbf{csd\_step}$ satisfies Specification
\ref{spec:csd_step}.
\end{thm}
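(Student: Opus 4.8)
The plan is to show that Algorithm $\mathbf{csd\_step}$, run in exact arithmetic, produces exactly the decomposition described in the definition of the CSD step, and then to verify that this decomposition matches what Specification \ref{spec:csd\_step} demands. Since Theorem \ref{thm:existenceanduniquenessofcsdstep} already guarantees that the CSD step exists and is uniquely determined, it suffices to prove that the Givens rotations computed by the algorithm, call them $T_1,T_2,S_1,S_2$ (each a product $G_1\cdots G_{m'-1}$ of rotations over the deflated index range $\imin:\imax$), coincide with the orthogonal factors of the preferred QR factorizations appearing in the four constituent SVD steps. Once this is established, the block identities (\ref{eq:blockidentities1})--(\ref{eq:blockidentities4}) and the uniqueness clause of Theorem \ref{thm:existenceanduniquenessofcsdstep} force $\bar B_{ij} = S_i^T B_{ij} T_j$ up to the signature corrections $D_1,D_2,E_1,E_2$, and Theorem \ref{thm:signsarenothingtofear} supplies those signatures to restore bidiagonal block form.

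First I would reduce to the already-proved SVD-step machinery. By Lemma \ref{lem:svdstepequivalenttoqrsteponbtb} and Lemma \ref{lem:svdstepequivalenttoqrsteponbbt}, the right factors of an SVD step on $B$ with shift $\sigma$ agree rotation-by-rotation with the $\mathbf{qr\_step}$ applied to $B^TB$ with $\lambda=\sigma^2$, and similarly the left factors agree with $\mathbf{qr\_step}$ on $BB^T$. Applying this to $B_{11}$ and $B_{21}$ with shifts $\mu$ and $\nu$ respectively, and invoking (\ref{eq:blockidentities1}), one sees that $B_{11}^TB_{11}-\mu^2 I$ and $B_{21}^TB_{21}-\nu^2 I$ are negatives of each other; by the uniqueness of the preferred QR factorization (and the fact that negating a symmetric tridiagonal matrix does not change its preferred orthogonal factor, since $Q$ depends only on the subdiagonal/Hessenberg structure via Theorem \ref{thm:productofgivens}), the two SVD steps share the same sequence of right Givens rotations. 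That common sequence is exactly what $\mathbf{csd\_step}$ computes as $T_1$. The same argument with (\ref{eq:blockidentities2})--(\ref{eq:blockidentities4}) gives $T_2,S_1,S_2$. The remaining content is bookkeeping: one checks that the interleaved bulge-chasing loop in the algorithm computes each of these rotations from whichever block supplies a nonzero vector, that $\mathbf{merge}$ is a no-op in exact arithmetic, and that the corner case of $\mathbf{bulge\_start}$/$\mathbf{givens}$ (rotation by $\pi/2$ when handed the zero vector) reproduces Case 2 of the preferred QR factorization — this is precisely the point where the deflation structure (\ref{eq:splitting}) and the $1\times 1$ blocks interact with the algorithm.

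I would then handle the sign/signature step: after the four equivalences $S_i^T B_{ij} T_j$, the result is a real orthogonal matrix whose four blocks are bidiagonal with the correct zero pattern but possibly wrong signs, so Theorem \ref{thm:signsarenothingtofear} yields $D_1,D_2,E_1,E_2$ putting it into bidiagonal block form, which is then read off implicitly as $(\theta^{(n+1)},\phi^{(n+1)})$ per Definition \ref{def:bdb}. Finally I would assemble the global orthogonal factors: $U_i^{(n+1)}$ and $V_i^{(n+1)}$ are the accumulated Givens rotations restricted to $\imin:\imax$, padded with identity blocks outside that range, composed with the signature matrices; since rows and columns outside $\imin:\imax$ are untouched (by the deflation form (\ref{eq:splitting})), the equation $\bigl[\begin{smallmatrix}B^{(n+1)}_{11}&B^{(n+1)}_{12}\\B^{(n+1)}_{21}&B^{(n+1)}_{22}\end{smallmatrix}\bigr] = \bigl[\begin{smallmatrix}U_1^{(n+1)}&\\&U_2^{(n+1)}\end{smallmatrix}\bigr]^* \bigl[\begin{smallmatrix}B^{(n)}_{11}&B^{(n)}_{12}\\B^{(n)}_{21}&B^{(n)}_{22}\end{smallmatrix}\bigr]\bigl[\begin{smallmatrix}V_1^{(n+1)}&\\&V_2^{(n+1)}\end{smallmatrix}\bigr]$ holds on the nose.

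The main obstacle I anticipate is the careful verification that the single interleaved loop in $\mathbf{csd\_step}$ genuinely produces the \emph{same} rotations as four separately-run $\mathbf{svd\_step}$ invocations — in particular, that at each stage the vector handed to $\mathbf{givens}$ (after $\mathbf{merge}$) spans the same line as the vector each individual SVD step would have used, including the degenerate cases where some block presents a zero vector (a deflated or $1\times 1$ sub-block) while another does not. This is where the structural hypothesis $\mu^2+\nu^2=1$ is essential — it is exactly what makes the four shifted matrices negatives in pairs, so that their preferred QR factorizations share orthogonal factors — and where the deliberate $\pi/2$ convention for $\mathbf{givens}$ on the zero input earns its keep, by making the exceptional Case 2 behavior emerge automatically from the same code path. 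Everything else is a routine consequence of results already in hand.
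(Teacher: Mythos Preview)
Your proposal is correct and shares the paper's overall strategy---show that $\mathbf{csd\_step}$ effects four simultaneous SVD steps as in Theorem~\ref{thm:existenceanduniquenessofcsdstep}, then invoke Theorem~\ref{thm:signsarenothingtofear} for the signature fix-up---but the tactic for the key step differs. You route through Lemmas~\ref{lem:svdstepequivalenttoqrsteponbtb}--\ref{lem:svdstepequivalenttoqrsteponbbt} and argue semantically that the preferred QR factor is invariant under negation of the tridiagonal matrix, thereby re-deriving the content of Theorem~\ref{thm:existenceanduniquenessofcsdstep}; the paper instead cites that theorem as a black box and attacks the ``main obstacle'' you identify by a concrete code-level argument: it strips $\mathbf{csd\_step}$ in two different ways (once keeping only the $(1,1)$/$(2,2)$ vectors in each $\mathbf{merge}$, once keeping only the $(1,2)$/$(2,1)$ vectors), observes that each stripped version is literally two copies of Algorithm~$\mathbf{svd\_step}$, and then uses the existence of the CSD step together with Theorem~\ref{thm:productofgivens} to conclude the two stripped versions produce identical Givens rotations---hence in every call to $\mathbf{merge}$ the two inputs are colinear (or one is zero and the other points along the second axis, matching the $\pi/2$ convention), so the full algorithm agrees with both. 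Your approach is a touch more conceptual; the paper's is more mechanical but makes the verification of the interleaved loop completely explicit rather than leaving it as something one ``checks.''
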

\begin{proof}
The proof is organized into three parts. First, it is shown that the
algorithm computes SVD steps for the top-left and bottom-right blocks.
Then, it is shown that the algorithm computes SVD steps for the top-right
and bottom-left blocks. Finally, utilizing the proof of Theorem \ref{thm:existenceanduniquenessofcsdstep},
it is shown that the algorithm as stated simultaneously computes SVD
steps of all four blocks and hence a CSD step for the entire matrix.

For the first part, temporarily delete lines \ref{line:definev21a},
\ref{line:defineu12a}, \ref{line:defineu21a}, \ref{line:definev21b},
\ref{line:definev12b}, \ref{line:defineu12b}, \ref{line:defineu21b},
\ref{line:definev12c}, \ref{line:signs}, and \ref{line:angles}
and make the following replacements:

\begin{tabular}{lll}
\hline 
line & original & replacement\tabularnewline
\hline
\ref{line:merge1} & $v_{1}:=\mathbf{merge}(v_{11},v_{21})$; & $v_{1}:=v_{11}$;\tabularnewline
\ref{line:merge2} & $u_{1}:=\mathbf{merge}(u_{11},u_{12})$; $u_{2}:=\mathbf{merge}(u_{21},u_{22})$; & $u_{1}:=u_{11}$; $u_{2}:=u_{22}$;\tabularnewline
\ref{line:merge3} & $v_{1}:=\mathbf{merge}(v_{11},v_{21})$; $v_{2}:=\mathbf{merge}(v_{12},v_{22})$; & $v_{1}:=v_{11}$; $v_{2}:=v_{22}$;\tabularnewline
\ref{line:merge4} & $u_{1}:=\mathbf{merge}(u_{11},u_{12})$; $u_{2}:=\mathbf{merge}(u_{21},u_{22})$; & $u_{1}:=u_{11}$; $u_{2}:=u_{22}$;\tabularnewline
\ref{line:merge5} & $v_{2}:=\mathbf{merge}(v_{12},v_{22})$; & $v_{2}:=v_{22}$;\tabularnewline
\hline
\end{tabular}

\noindent{}All references to the top-right and bottom-left blocks
have been removed, and the resulting algorithm is equivalent to running
Algorithm $\mathbf{svd\_step}$ on the top-left and bottom-right blocks.
Hence, $U_{1}^{(n+1)}$ and $V_{1}^{(n+1)}$ are the outer factors
from an SVD step on the top-left block, and $U_{2}^{(n+1)}$ and $V_{2}^{(n+1)}$
are the outer factors from an SVD step on the bottom-right block.

For the second part, revert back to the original algorithm but make
the following changes, intended to focus attention on the top-right
and bottom-left blocks: Delete lines \ref{line:definev11a}, \ref{line:defineu11a},
\ref{line:defineu22a}, \ref{line:definev11b}, \ref{line:definev22b},
\ref{line:defineu11b}, \ref{line:defineu22b}, \ref{line:definev22c},
\ref{line:signs}, and \ref{line:angles} and make the following replacements:

\begin{tabular}{lll}
\hline 
line & original & replacement\tabularnewline
\hline
\ref{line:merge1} & $v_{1}:=\mathbf{merge}(v_{11},v_{21})$; & $v_{1}:=v_{21}$;\tabularnewline
\ref{line:merge2} & $u_{1}:=\mathbf{merge}(u_{11},u_{12})$; $u_{2}:=\mathbf{merge}(u_{21},u_{22})$; & $u_{1}:=u_{12}$; $u_{2}:=u_{21}$;\tabularnewline
\ref{line:merge3} & $v_{1}:=\mathbf{merge}(v_{11},v_{21})$; $v_{2}:=\mathbf{merge}(v_{12},v_{22})$; & $v_{1}:=v_{21}$; $v_{2}:=v_{12}$;\tabularnewline
\ref{line:merge4} & $u_{1}:=\mathbf{merge}(u_{11},u_{12})$; $u_{2}:=\mathbf{merge}(u_{21},u_{22})$; & $u_{1}:=u_{12}$; $u_{2}:=u_{21}$;\tabularnewline
\ref{line:merge5} & $v_{2}:=\mathbf{merge}(v_{12},v_{22})$; & $v_{2}:=v_{12}$;\tabularnewline
\hline
\end{tabular}

\noindent{}This time, the algorithm is equivalent to running Algorithm
$\mathbf{svd\_step}$ on the top-right and bottom-left blocks, and
so $U_{1}^{(n+1)}$ and $V_{2}^{(n+1)}$ are the outer factors from
an SVD step on the top-right block and $U_{2}^{(n+1)}$ and $V_{1}^{(n+1)}$
are the outer factors from an SVD step on the bottom-left block.

By the existence of the CSD step, the two versions of the algorithm
discussed above produce identical $U_{1}^{(n+1)}$, $U_{2}^{(n+1)}$,
$V_{1}^{(n+1)}$, and $V_{2}^{(n+1)}$. Hence, by Theorem \ref{thm:productofgivens},
the two versions produce identical Givens rotations. Therefore, in
each invocation of $\mathbf{merge}$ in the final algorithm, one of
the following holds: both vectors are nonzero and colinear, or one
vector is the zero vector and the other vector points along the second
coordinate axis, or both vectors equal the zero vector. In any case,
the call to $\mathbf{merge}$ is well defined. Therefore, the final
algorithm simultaneously computes SVD steps for all four blocks.

The final two lines of code could be implemented (inefficiently) with
a single call to $\mathbf{bidiagonalize}$. It is straightforward
to check that this would only change the signs of entries and would
compute $\theta^{(n+1)}$ and $\phi^{(n+1)}$. $\qed$
\end{proof}

\section{\label{sec:csd}Algorithm $\mathbf{csd}$}

Algorithm $\mathbf{csd}$ is the driver algorithm, responsible for
initiating Phase I and Phase II.

Given an $m$-by-$m$ unitary matrix $X$ and integers $p$, $q$
with $0\leq q\leq p$ and $p+q\leq m$, $\mathbf{csd}(X,p,q)$ should
compute $\theta=(\theta_{1},\dots,\theta_{q})\in[0,\frac{\pi}{2}]^{q}$,
$U_{1}\in U(p)$, $U_{2}\in U(m-p)$, $V_{1}\in U(q)$, and $V_{2}\in U(m-q)$
satisfying\[
X\approx\left[\begin{array}{cc}
U_{1}\\
 & U_{2}\end{array}\right]\left[\begin{array}{r|ccc}
C & S & 0 & 0\\
0 & 0 & I_{p-q} & 0\\
\hline -S & C & 0 & 0\\
0 & 0 & 0 & I_{m-p-q}\end{array}\right]\left[\begin{array}{cc}
V_{1}\\
 & V_{2}\end{array}\right]^{*},\]
with $C=\diag(\cos(\theta_{1}),\dots,\cos(\theta_{q}))$ and $S=\diag(\sin(\theta_{1}),\dots,\sin(\theta_{q}))$.
It would be preferable to replace the approximate equality with an
error bound, but a formal stability analysis is left to the future.

$\mathbf{csd}$'s responsibility is to invoke $\mathbf{bidiagonalize}$
once and then $\mathbf{csd\_step}$ repeatedly. The following code
fleshes out this outline.

\begin{alg}[$\mathbf{csd}$]
Given $X$, $p$, and $q$, the following algorithm computes the complete CS decomposition in terms of $\theta$, $U_1$, $U_2$, $V_1$, and $V_2$.
\lstset{columns=flexible,mathescape,numberblanklines=false,numbers=left,numberstyle=\tiny,escapechar=\%}

\medskip

\begin{lstlisting}
%\label{line:csdcallsbidiagonalize}%Find $\theta^{(0)}$, $\phi^{(0)}$, $P_1$, $P_2$, $Q_1$, and $Q_2$ with $\mathbf{bidiagonalize}$;
%\label{line:csdround1}If any $\theta^{(0)}_i$ or $\phi^{(0)}_i$ is negligibly different from $0$ or $\frac{\pi}{2}$, then round;%

$n:=0$;

Set $\underline{i}$ and $\overline{i}$ as in %(\ref{eq:iminimax})%;

%while $\overline{i}>1$%

    %if some $\theta^{(n)}_{i}$, $\underline{i} \leq i \leq \overline{i}$, is negligibly far from $\frac{\pi}{2}$, then%
        %$\mu:=0$; $\nu:=1$;%
    %elseif some $\theta^{(n)}_{i}$, $\underline{i} \leq i \leq \overline{i}$, is negligibly far from $0$, then%
        %$\mu:=1$; $\nu:=0$;%
    %else%
%\label{line:setshifts}%        %Select shifts $\mu$ and $\nu$ satisfying $\mu^2+\nu^2=1$;%    %(* see discussion *)%
    %end if%

%\label{line:csdcallsqrstep}%    %Compute $\theta^{(n+1)},\phi^{(n+1)},U^{(n+1)}_1,U^{(n+1)}_2,V^{(n+1)}_1,V^{(n+1)}_2$ with $\mathbf{csd\_step}$;%

    $n:=n+1$;
    %\label{line:csdround2}If any $\theta^{(n)}_i$ or $\phi^{(n)}_i$ is negligibly different from $0$ or $\frac{\pi}{2}$, then round;%
    Set $\underline{i}$ and $\overline{i}$ as in %(\ref{eq:iminimax})%;

end while

$U_1 := P_1 ((U^{(1)}_1U^{(2)}_1\cdots U^{(n)}_1) \oplus I_{p-q})$; $U_2 := P_2 ((U^{(1)}_2U^{(2)}_2\cdots U^{(n)}_2) \oplus I_{m-p-q})$;
$V_1 := Q_1 (V^{(1)}_1V^{(2)}_1\cdots V^{(n)}_1)$; $V_2 := Q_2 ((V^{(1)}_2V^{(2)}_2\cdots V^{(n)}_2) \oplus I_{m-2q})$;
\end{lstlisting}
\end{alg}

As promised, the algorithm calls $\mathbf{bidiagonalize}$ once and
$\mathbf{csd\_step}$ repeatedly. The subtleties are the deflation
procedure and the choice of shifts.

\paragraph{Deflation}

If the matrix obtained after $n$ CSD steps,\[
\left[\begin{array}{cc}
B_{11}^{(n)} & B_{12}^{(n)}\\
B_{21}^{(n)} & B_{22}^{(n)}\end{array}\right]=\left[\begin{array}{cc}
B_{11}(\theta^{(n)},\phi^{(n)}) & B_{12}(\theta^{(n)},\phi^{(n)})\\
B_{21}(\theta^{(n)},\phi^{(n)}) & B_{22}(\theta^{(n)},\phi^{(n)})\end{array}\right],\]
has the form (\ref{eq:splitting}), then the next CSD step can focus
on just a principal submatrix. The indices $\underline{i}$ and $\overline{i}$
are determined by $\phi^{(n)}$ as follows. \begin{equation}
\left\{ \begin{aligned} & \phi_{\overline{i}}^{(n)}=\cdots=\phi_{q-1}^{(n)}=0\\
 & \text{none of }\phi_{\underline{i}}^{(n)},\dots,\phi_{\overline{i}-1}^{(n)}\text{ equal }0\\
 & \underline{i}\text{ is as small as possible}\end{aligned}
\right.\label{eq:iminimax}\end{equation}
Hence, deflation occurs when some $\phi_{i}^{(n)}$ transitions from
nonzero to zero.

Note that some $\theta_{i}^{(n)}$ may equal $0$ or $\frac{\pi}{2}$
or some $\phi_{i}^{(n)}$ may equal $\frac{\pi}{2}$, producing zero
entries in the matrix without satisfying (\ref{eq:iminimax}). Fortunately,
in this case one of the blocks has a zero on its diagonal, the appropriate
shift is set to zero, and deflation occurs in the one block just as
in the bidiagonal SVD algorithm of Demmel and Kahan \cite{MR1057146}.
(This is the reason for Case 2 of the preferred QR factorization.)
It is easy to check that then one of the $\phi_{i}^{(n+1)}$ becomes
zero. Hence, as soon as a zero appears anywhere in any of the four
bidiagonal bands, the entire matrix in bidiagonal block form deflates
in at most one more step (assuming exact arithmetic).

\paragraph{Choice of shift}

There are two natural possibilities for choosing shifts in line \ref{line:setshifts}. 

\begin{itemize}
\item One possibility is to let $\mu$ or $\nu$ be a Wilkinson shift (the
smaller singular value of the trailing 2-by-2 submatrix of one of
the blocks) from which the other shift follows by $\mu^{2}+\nu^{2}=1$.
This seems to give preference to one block over the other three, but
as mentioned above, as soon as one block attains a zero, the other
three blocks follow immediately.
\item Another possibility is to let $\mu$ and $\nu$ be singular values
of appropriate blocks, i.e., {}``perfect shifts.'' This keeps the
number of CSD steps as small as possible \cite[p.~417]{MR1417720}
at the cost of extra singular value computations.
\end{itemize}
Based on what is known about the SVD problem and limited testing with
the CSD algorithm, Wilkinson shifts appear to be the better choice,
but more real world experience is desirable.

\section{\label{sec:numericalconcerns}On numerical stability}

So far, the input matrix $X$ has been assumed exactly unitary, and
exact arithmetic has been assumed as well. What happens in a more
realistic environment? The algorithm is designed for numerical stability.
All four blocks are considered simultaneously and with equal regard.
Nearly every computation is based on information from two different
sources, from which the algorithm can choose the more reliable source.

Below, some numerical issues and strategies are addressed. Then results
from numerical experiments are presented. A BLAS/\-LAPACK-based implementation
is available from the author's web site for further testing.

\subsection{Numerical issues and strategies}

\paragraph{The better of two vectors in $\mathbf{bidiagonalize}$.}

As mentioned in the discussion after Algorithm \ref{alg:bidiagonalize},
most of the Householder reflectors used in the bidiagonalization procedure
are determined by pairs of colinear vectors. If the input matrix $X$
is not exactly unitary, then some of the pairs of vectors will not
be exactly colinear. Each of the computations for $u_{1}$, $u_{2}$,
$v_{1}$, and $v_{2}$ in Algorithm \ref{alg:bidiagonalize} performs
an averaging of two different vectors in which the vector of greater
norm is weighted more heavily. (Vectors of greater norm tend to provide
more reliable information about direction.)

\paragraph{Implementation of $\mathbf{bulge\_start}$.}

The implementation of $\mathbf{bulge\_start}$ (Algorithm \ref{alg:bulgeinducingvector})
requires care to minimize roundoff error. LAPACK's DBDSQR provides
guidance \cite{323215}.

\paragraph{Implementation of $\mathbf{merge}$.}

In $\mathbf{csd\_step}$, most Givens rotations can be constructed
from either (or both) of two colinear vectors. The function $\mathbf{merge}$
in the pseudocode is shorthand for a somewhat more complicated procedure
in our implementation:

\begin{itemize}
\item If the Givens rotation is chasing an existing bulge in one block and
introducing a new bulge in the other block, then base the Givens rotation
entirely on the existing bulge. (Maintaining bidiagonal block form
is crucial.)
\item If the Givens rotation is chasing existing bulges in both blocks,
then take a weighted average of the two vectors in the call to $\mathbf{merge}$,
waiting the longer vector more heavily. (Vectors of greater norm provide
more reliable information about direction.)
\item If the Givens rotation is introducing new bulges into both blocks,
then base the computation solely on the block associated with the
smaller shift, either $\mu$ or $\nu$. (In particular, when one shift
is zero, this strategy avoids roundoff error in $\mathbf{bulge\_start}$.)
\end{itemize}

\paragraph{Representation of $\theta^{(n)}$ and $\phi^{(n)}$.}

Angles of $0$ and $\frac{\pi}{2}$ play a special role in the deflation
procedure. Because common floating-point architectures represent angles
near 0 more precisely than angles near $\frac{\pi}{2}$, it may be
advisable to store any angle $\psi$ as a pair $(\psi,\frac{\pi}{2}-\psi)$.
This may provide a minor improvement but appears to be optional.

\paragraph{Rounding of $\theta^{(n)}$ and $\phi^{(n)}$ in $\mathbf{csd}$.}

To encourage fast termination, some angles in $\theta^{(n)}$ and
$\phi^{(n)}$ may need to be rounded when they are negligibly far
from 0 or $\frac{\pi}{2}$. The best test for negligibility will be
the subject of future work.

\paragraph{Disagreement of singular values when using perfect shifts.}

If in $\mathbf{csd}$, the shifts $\mu$ and $\nu$ are chosen to
be perfect shifts, i.e., singular values of appropriate blocks, then
the computed shifts may not satisfy $\mu^{2}+\nu^{2}=1$ exactly in
the presence of roundoff error. Empirically, satisfying $\mu^{2}+\nu^{2}=1$
appears to be crucial. Either $\mu$ or $\nu$ should be set to a
singular value no greater than $\frac{1}{\sqrt{2}}$ and then the
other shift computed from $\mu^{2}+\nu^{2}=1$.

\subsection{Results of numerical experiments}

The sharing of singular vectors in (\ref{eq:csdsvd}) is often seen
as a hindrance to numerical computation. But in our algorithm, the
redundancy appears to be a source of robustness---when one block provides
questionable information, a neighboring block may provide more reliable
information. Numerical tests support this claim.

Our criteria for a stable CSD computation,\[
X\approx\left[\begin{array}{cc}
U_{1}\\
 & U_{2}\end{array}\right]\left[\begin{array}{r|ccc}
C & S & 0 & 0\\
0 & 0 & I_{p-q} & 0\\
\hline -S & C & 0 & 0\\
0 & 0 & 0 & I_{m-p-q}\end{array}\right]\left[\begin{array}{cc}
V_{1}\\
 & V_{2}\end{array}\right]^{*},\]
are the following: If $X$ is nearly unitary, \begin{equation}
\|X^{*}X-I_{m}\|_{2}=\varepsilon,\label{eq:stabilityinputcriterion}\end{equation}
then we desire $\theta=(\theta_{1},\dots,\theta_{q})$ and $U_{1},U_{2},V_{1},V_{2}$
such that\[
C=\diag(\cos(\theta_{1}),\dots,\cos(\theta_{q}))\qquad S=\diag(\sin(\theta_{1}),\dots,\sin(\theta_{q}))\]
\[
\|U_{1}^{*}U_{1}-I_{p}\|_{2}\approx\varepsilon\qquad\|U_{2}^{*}U_{2}-I_{m-p}\|_{2}\approx\varepsilon\]
\[
\|V_{1}^{*}V_{1}-I_{q}\|_{2}\approx\varepsilon\qquad\|V_{2}^{*}V_{2}-I_{m-q}\|_{2}\approx\varepsilon\]
\[
\left[\begin{array}{c}
C\\
0\end{array}\right]=U_{1}^{T}(X_{11}+E_{11})V_{1},\quad\|E_{11}\|_{2}\approx\varepsilon\]
\[
\left[\begin{array}{ccc}
S & 0 & 0\\
0 & I_{p-q} & 0\end{array}\right]=U_{1}^{T}(X_{12}+E_{12})V_{2},\quad\|E_{12}\|_{2}\approx\varepsilon\]
\[
\left[\begin{array}{c}
-S\\
0\end{array}\right]=U_{2}^{T}(X_{21}+E_{21})V_{1},\quad\|E_{21}\|_{2}\approx\varepsilon\]
\[
\left[\begin{array}{ccc}
C & 0 & 0\\
0 & 0 & I_{m-p-q}\end{array}\right]=U_{2}^{T}(X_{22}+E_{22})V_{2},\quad\|E_{22}\|_{2}\approx\varepsilon.\]

\paragraph{Van Loan's example.}

Our first test case is based on an example of Van Loan \cite{MR796639}.
Let \small\[
X_{11}
=
\left[
\begin{array}{D{.}{.}{12} D{.}{.}{12} D{.}{.}{13} D{.}{.}{12}}
\ \ 0.220508860423 &-0.114095899416 & 0.001410518052 &\ 0.309131888087\\
 0.075149984350 & 0.552192330457 & 0.309420137864 & 0.519525649668\\
 0.346099513974 &-0.465523358094 &-0.147474170901 & 0.284504924779\\
 0.200314808251 & 0.015869922033 & 0.063768831702 & 0.364621650530
\end{array}
\right]
\]
\[
X_{12}
=
\left[
\begin{array}{D{.}{.}{12} D{.}{.}{12} D{.}{.}{12} D{.}{.}{12}}
 0.123868614848 &-0.424487382687 & 0.756283107266 &-0.274401793502\\
 0.505660921957 & 0.028765021298 &-0.138696588123 & 0.219160328651\\
-0.068044487719 &-0.292950312278 &-0.202722377746 & 0.655183291894\\
-0.339461927716 &-0.307319405113 &-0.530848659627 &-0.575436177767
\end{array}
\right]
\]
\[
X_{21}
=
\left[
\begin{array}{D{.}{.}{12} D{.}{.}{12} D{.}{.}{12} D{.}{.}{12}}
-0.149903307775 & 0.456869095895 &-0.814555019070 & 0.205461483909\\
-0.132593956233 & 0.403919514293 & 0.374067025998 &-0.294979263882\\
 0.631588073183 & 0.226164206817 & 0.132173742848 & 0.047014825861\\
-0.588949720476 &-0.205112923304 & 0.239887841318 & 0.537774110108
\end{array}
\right]
\]
\[
X_{22}
=
\left[
\begin{array}{D{.}{.}{12} D{.}{.}{13} D{.}{.}{13} D{.}{.}{12}}
-0.211288905103 &-0.065095708488 &\ 0.064582503584 &\ 0.100169729053\\
-0.422173038686 &-0.565182436669 & 0.079260723473 & 0.297296887111\\
-0.473064671229 & 0.502284642254 & 0.218767959397 & 0.079539299401\\
-0.403033356444 & 0.250518329548 & 0.166101999167 & 0.107399029584
\end{array}
\right],
\]\normalsize{} and let $X=\left[\begin{smallmatrix}X_{11} & X_{12} \\ X_{21} & X_{22} \end{smallmatrix}\right]$.
Van Loan considered the submatrix $\left[\begin{smallmatrix}X_{11} \\ X_{21} \end{smallmatrix}\right]$. 

$X$ satisfies (\ref{eq:stabilityinputcriterion}) with $\varepsilon\approx3.4\times10^{-12}$.
Our implementation produces $\theta$, $U_{1}$, $U_{2}$, $V_{1}$,
and $V_{2}$ for which $\|U_{1}^{T}U_{1}-I_{4}\|_{2}\approx1.2\times10^{-15}$,
$\|U_{2}^{T}U_{2}-I_{4}\|_{2}\approx1.4\times10^{-15}$, $\|V_{1}^{T}V_{1}-I_{4}\|_{2}\approx5.8\times10^{-16}$,
$ $$\|V_{2}^{T}V_{2}-I_{4}\|_{2}\approx1.7\times10^{-15}$, $\|E_{11}\|_{2}\approx1.3\times10^{-12}$,
$\|E_{12}\|_{2}\approx6.1\times10^{-13}$, $\|E_{21}\|_{2}\approx1.6\times10^{-12}$,
and $\|E_{22}\|_{2}\approx9.3\times10^{-13}$. The algorithm performs
stably.

\paragraph{Haar measure.}

Let $X$ be a random $40$-by-$40$ orthogonal matrix from Haar measure,
let $p=18$ and $q=15$, and compute the CS decomposition of $X$
using $\mathbf{csd}$. Actually, it is impossible to sample exactly
from Haar measure in floating-point, so define $X$ by the following
Matlab code. \lstset{escapechar=\%,mathescape}
\begin{lstlisting}
[X,R] = qr(randn(40));
X = X * diag(sign(randn(40,1)));
\end{lstlisting} Over 1000 trials of our implementation, $\|U_{1}^{T}U_{1}-I_{18}\|_{2}$,
$\|U_{2}^{T}U_{2}-I_{22}\|_{2}$, $\|V_{1}^{T}V_{1}-I_{15}\|_{2}$,
$\|V_{2}^{T}V_{2}-I_{25}\|_{2}$, $\|E_{11}\|_{2}$, $\|E_{12}\|_{2}$,
$\|E_{21}\|_{2}$, and $\|E_{22}\|_{2}$ were all less than $2\varepsilon$,
where $\varepsilon$ was defined to be the greater of ten times machine
epsilon or $\|X^{T}X-I_{40}\|_{2}$.

\paragraph{Clusters of singular values.}

Let $\delta_{1},\delta_{2},\delta_{3},\dots,\delta_{21}$ be independent
and identically distributed random variables each with the same distribution
as $10^{-18U(0,1)}$, in which $U(0,1)$ is a random variable uniformly
distributed on $[0,1]$. For $i=1,\dots,20$, let $\theta_{i}=\frac{\pi}{2}\cdot\frac{\sum_{k=1}^{i}\delta_{k}}{\sum_{k=1}^{21}\delta_{k}}$,
and let $C=\diag(\cos(\theta_{1}),\dots,\cos(\theta_{20}))$, $S=\diag(\sin(\theta_{1}),\dots,\sin(\theta_{20}))$,
and \[
X=\left[\begin{array}{cc}
U_{1}\\
 & U_{2}\end{array}\right]\left[\begin{array}{rc}
C & S\\
-S & C\end{array}\right]\left[\begin{array}{cc}
V_{1}\\
 & V_{2}\end{array}\right]^{T},\]
in which $U_{1}$, $U_{2}$, $V_{1}$, and $V_{2}$ are random orthogonal
matrices from Haar measure. (These matrices can be sampled as $X$
was sampled in the previous test case.) The random matrix $X$ is
designed so that $C$ and $S$ have clustered singular values, as
well as singular values close to $0$ and $1$. Such singular values
break the naive CSD algorithm. Compute the CSD of $X$ with $p=q=20$
using $\mathbf{csd}$. Over 1000 trials of our implementation, $\|U_{1}^{T}U_{1}-I_{18}\|_{2}$,
$\|U_{2}^{T}U_{2}-I_{22}\|_{2}$, $\|V_{1}^{T}V_{1}-I_{15}\|_{2}$,
$\|V_{2}^{T}V_{2}-I_{25}\|_{2}$, $\|E_{11}\|_{2}$, $\|E_{12}\|_{2}$,
$\|E_{21}\|_{2}$, and $\|E_{22}\|_{2}$ were all less than $3\varepsilon$,
with $\varepsilon$ defined as in the previous test case.

\paragraph{$\theta^{(0)}$ and $\phi^{(0)}$ chosen uniformly from $\left[0,\frac{\pi}{2}\right]$}

Choose $\theta_{1},\dots,\theta_{20}$ and $\phi_{1},\dots,\phi_{19}$
independently and uniformly from the interval $\left[0,\frac{\pi}{2}\right]$,
and let \[
X=\left[\begin{array}{cc}
B_{11}(\theta,\phi) & B_{12}(\theta,\phi)\\
B_{21}(\theta,\phi) & B_{22}(\theta,\phi)\end{array}\right].\]
Compute the CSD of $X$ with $p=q=20$ using $\mathbf{csd}$. Over
1000 trials, $\|U_{1}^{T}U_{1}-I_{18}\|_{2}$, $\|U_{2}^{T}U_{2}-I_{22}\|_{2}$,
$\|V_{1}^{T}V_{1}-I_{15}\|_{2}$, $\|V_{2}^{T}V_{2}-I_{25}\|_{2}$,
$\|E_{11}\|_{2}$, $\|E_{12}\|_{2}$, $\|E_{21}\|_{2}$, and $\|E_{22}\|_{2}$
were all less than $4\varepsilon$, with $\varepsilon$ defined as
above.

\paragraph{$\theta^{(0)}$ and $\phi^{(0)}$ chosen randomly from $\left\{ 0,\frac{\pi}{4},\frac{\pi}{2}\right\} $.}

Repeat the previous test case, but with $\theta_{1},\dots,\theta_{20}$
and $\phi_{1},\dots,\phi_{19}$ chosen uniformly from the three-element
set $\left\{ 0,\frac{\pi}{4},\frac{\pi}{2}\right\} $. This produces
test matrices with many zeros, which can tax the novel aspects of
our extension of the Golub-Kahan-Demmel SVD step. Over 1000 trials,
$\|U_{1}^{T}U_{1}-I_{18}\|_{2}$, $\|U_{2}^{T}U_{2}-I_{22}\|_{2}$,
$\|V_{1}^{T}V_{1}-I_{15}\|_{2}$, $\|V_{2}^{T}V_{2}-I_{25}\|_{2}$,
$\|E_{11}\|_{2}$, $\|E_{12}\|_{2}$, $\|E_{21}\|_{2}$, and $\|E_{22}\|_{2}$
were all less than $\varepsilon$, with $\varepsilon$ defined as
above.

\begin{acknowledgment*}
The thoughtful comments of an anonymous referee are appreciated.
\end{acknowledgment*}
\appendix

\section{\label{sec:proofs}Additional proofs}

\subsection{Proof of Lemma \ref{lem:svdstepequivalenttoqrsteponbtb} }

\begin{proof}
The proof is by induction on $i$.

That $T_{1}=G_{1}$ is straightforward to prove.

Assume by induction that $T_{j}=G_{j}$, $j=1,\dots,i-1$. At line
\ref{line:computeT} of Algorithm \ref{alg:svdstep}, $\bar{B}=S_{i-1}^{T}\cdots S_{1}^{T}BT_{1}\cdots T_{i-1}$,
and so \[
\bar{B}^{T}\bar{B}-\sigma^{2}I=T_{i-1}^{T}\cdots T_{1}^{T}(B^{T}B-\sigma^{2}I)T_{1}\cdots T_{i-1}.\]
At line \ref{line:computeG} of Algorithm \ref{alg:implicitqrtridiagonal},
\[
\bar{A}=G_{i-1}^{T}\cdots G_{1}^{T}(A-\lambda I)G_{1}\cdots G_{i-1}.\]
By the induction hypothesis and the fact that $B^{T}B-\sigma^{2}I=A-\lambda I$,
we have (at the current step), \begin{equation}
\bar{B}^{T}\bar{B}-\sigma^{2}I=\bar{A}.\label{eq:appendixeq1}\end{equation}
We show $T_{i}=G_{i}$ up to signs using three cases.

\emph{Case 1: $\bar{A}(i:i+1,i-1)$ is nonzero.} From (\ref{eq:appendixeq1}),
$\bar{A}(i:i+1,i-1)=\bar{B}(i-1,i-1)\bar{B}(i-1,i:i+1)^{T}$. Hence,
$\bar{B}(i-1,i:i+1)^{T}$ is nonzero, $\bar{A}(i:i+1,i-1)$ is colinear
with $\bar{B}(i-1,i:i+1)^{T}$, and $T_{i}=G_{i}$.

\emph{Case 2: $\bar{A}(i:i+1,i-1)$ and $\bar{B}(i-1,i:i+1)^{T}$
equal the zero vector.} Then the Givens rotations $T_{i}$ and $G_{i}$
are based on $\bar{A}(i:i+1,i)$ and $(\|\bar{B}(:,i)\|^{2}-\sigma^{2},\bar{B}(:,i)^{T}\bar{B}(:,i+1))^{T}$,
respectively, which are equal according to (\ref{eq:appendixeq1}).
Hence, $T_{i}=G_{i}$.

\emph{Case 3: $\bar{A}(i:i+1,i-1)$ is the zero vector but $\bar{B}(i-1,i:i+1)$
is nonzero. }Then $\bar{B}$ must have the form\[
\bar{B}=\left[\begin{array}{cccccc}
* & *\\
 & * & *\\
 &  & 0 & a & b\\
 &  &  & c & d\\
 &  &  &  & * & *\\
 &  &  &  &  & *\end{array}\right]\]
with $a$ and $b$ not both zero. Let's roll back the previous Givens
rotation:\[
(S_{i-1}^{T})^{-1}\bar{B}=S_{i-1}\bar{B}=\left[\begin{array}{cccccc}
* & *\\
 & * & *\\
 &  & 0 & x\\
 &  & 0 & y & z\\
 &  &  &  & * & *\\
 &  &  &  &  & *\end{array}\right].\]
Hence,\[
\bar{A}=\bar{B}^{T}\bar{B}-\sigma^{2}I=(S_{i-1}\bar{B})^{T}(S_{i-1}\bar{B})-\sigma^{2}I=\left[\begin{array}{cccccc}
* & *\\
* & * & *\\
 & * & * & 0 & 0\\
 &  & 0 & x^{2}+y^{2}-\sigma^{2} & yz\\
 &  & 0 & yz & * & *\\
 &  &  &  & * & *\end{array}\right].\]
Now, $S_{i-1}=\mathbf{givens}(i-1,i,(x^{2}-\sigma^{2},xy)^{T})=\pm\frac{1}{r}\left[\begin{array}{cc}
x^{2}-\sigma^{2} & -xy\\
xy & x^{2}-\sigma^{2}\end{array}\right]$, in which $r=\|(x^{2}-\sigma^{2},xy)^{T}\|$, so\[
\bar{B}=S_{i-1}^{T}S_{i-1}\bar{B}=\left[\begin{array}{cccccc}
* & *\\
 & * & *\\
 &  & 0 & \pm\frac{x}{r}(x^{2}+y^{2}-\sigma^{2}) & \pm\frac{x}{r}yz\\
 &  & 0 & \pm\frac{y}{r}(-\sigma^{2}) & \pm\frac{1}{r}(x^{2}-\sigma^{2})z\\
 &  &  &  & * & *\\
 &  &  &  &  & *\end{array}\right].\]
By assumption, $\bar{B}(i-1,i:i+1)^{T}$ is nonzero, and hence by
inspection $\bar{B}(i-1,i:i+1)^{T}$ and $\bar{A}(i:i+1,i)$ are colinear.
Therefore, $T_{i}=G_{i}$. $\qed$
\end{proof}
{}

\subsection{Proof of Lemma \ref{lem:svdstepequivalenttoqrsteponbbt}}

\begin{proof}
Assume by induction that $S_{j}=G_{j}$ for $j=1,\dots,i-1$. At line
\ref{line:computeS} of Algorithm \ref{alg:svdstep}, $\bar{B}=S_{i-1}^{T}\cdots S_{1}^{T}BT_{1}\cdots T_{i}$,
and so \[
\bar{B}\bar{B}^{T}-\sigma^{2}I=S_{i-1}^{T}\cdots S_{1}^{T}(BB^{T}-\sigma^{2}I)S_{1}\cdots S_{i-1}.\]
At line \ref{line:computeG} of Algorithm \ref{alg:implicitqrtridiagonal},
\[
\bar{A}=G_{i-1}^{T}\cdots G_{1}^{T}(A-\lambda I)G_{1}\cdots G_{i-1}.\]
By the induction hypothesis and the fact that $BB^{T}-\sigma^{2}I=A-\lambda I$,
we have (at the current step), \begin{equation}
\bar{B}\bar{B}^{T}-\sigma^{2}I=\bar{A}.\label{eq:appendixeq2}\end{equation}
We show $S_{i}=G_{i}$ using three cases.

\emph{Case 1: $\bar{A}(i:i+1,i-1)$ is nonzero.} From (\ref{eq:appendixeq2}),
$\bar{A}(i:i+1,i-1)=\bar{B}(i-1,i)\bar{B}(i:i+1,i)$. Hence, $\bar{B}(i:i+1,i)$
is nonzero, $\bar{A}(i:i+1,i-1)$ is colinear with $\bar{B}(i:i+1,i)$,
and $S_{i}=G_{i}$.

\emph{Case 2: $\bar{A}(i:i+1,i-1)$ and $\bar{B}(i:i+1,i)$ equal
the zero vector.} Then the Givens rotations $G_{i}$ and $S_{i}$
are based on $\bar{A}(i:i+1,i)$ and $(\|\bar{B}(i,:)\|^{2}-\sigma^{2},\bar{B}(i,:)\bar{B}(i+1,:))$,
respectively, which are equal according to (\ref{eq:appendixeq2}).
Hence, $S_{i}=G_{i}$.

\emph{Case 3: $\bar{A}(i:i+1,i-1)$ is the zero vector but $\bar{B}(i:i+1,i)$
is nonzero. }Then $\bar{B}$ must have the form\[
\bar{B}=\left[\begin{array}{cccccc}
* & *\\
 & * & 0\\
 &  & a & c\\
 &  & b & d & *\\
 &  &  &  & * & *\\
 &  &  &  &  & *\end{array}\right]\]
with $a$ and $b$ not both zero. Let's roll back the previous Givens
rotation:\[
\bar{B}T_{i}^{-1}=\bar{B}T_{i}^{T}=\left[\begin{array}{cccccc}
* & *\\
 & * & 0 & 0\\
 &  & x & y\\
 &  &  & z & *\\
 &  &  &  & * & *\\
 &  &  &  &  & *\end{array}\right].\]
Hence,\[
\bar{A}=\bar{B}\bar{B}^{T}-\sigma^{2}I=(\bar{B}T_{i}^{T})(\bar{B}T_{i}^{T})^{T}-\sigma^{2}I=\left[\begin{array}{cccccc}
* & *\\
* & * & 0 & 0\\
 & 0 & x^{2}+y^{2}-\sigma^{2} & yz\\
 & 0 & yz & * & *\\
 &  &  & * & * & *\\
 &  &  &  & * & *\end{array}\right].\]
Now, $T_{i}=\mathbf{givens}(i,i+1,(x^{2}-\sigma^{2},xy)^{T})=\pm\frac{1}{r}\left[\begin{array}{cc}
x^{2}-\sigma^{2} & -xy\\
xy & x^{2}-\sigma^{2}\end{array}\right]$, in which $r=\|(x^{2}-\sigma^{2},xy)^{T}\|$, so\[
\bar{B}=\bar{B}T_{i}^{T}T_{i}=\left[\begin{array}{cccccc}
* & *\\
 & * & 0 & 0\\
 &  & \pm\frac{x}{r}(x^{2}+y^{2}-\sigma^{2}) & \pm\frac{y}{r}(-\sigma^{2})\\
 &  & \pm\frac{x}{r}yz & \pm\frac{1}{r}(x^{2}-\sigma^{2})z & *\\
 &  &  &  & * & *\\
 &  &  &  &  & *\end{array}\right].\]
By assumption, $\bar{B}(i:i+1,i)$ is nonzero, and hence by inspection
$\bar{B}(i:i+1,i)$ and $\bar{A}(i:i+1,i)$ are colinear. Therefore,
$S_{i}=G_{i}$. $\qed$
\end{proof}
\bibliographystyle{plain}
\bibliography{sutton-csd}

\end{document}